\DeclareSymbolFontAlphabet{\mathbb}{AMSb}
\DeclareSymbolFontAlphabet{\mathbbl}{bbold}
\newcommand{\Prism}{{ \mathbbl{\Delta}}}
\newtheorem{theorem}{Theorem}[section]
\crefname{theorem}{Theorem}{Theorems}
\newtheorem{lemma}[theorem]{Lemma}
\crefname{lemma}{Lemma}{Lemmas}
\newtheorem{corollary}[theorem]{Corollary}
\crefname{corollary}{Corollary}{Corollaries}
\newtheorem{proposition}[theorem]{Proposition}
\crefname{proposition}{Proposition}{Propositions}
\theoremstyle{definition}
\newtheorem{definition}[theorem]{Definition}
\crefname{definition}{Definition}{Definitions}
\newtheorem{remark}[theorem]{Remark}
\crefname{remark}{Remark}{Remarks}
\newtheorem{example}[theorem]{Example}
\crefname{example}{Example}{Examples}
\crefname{conjecture}{Conjecture}{Conjectures}
\newtheorem{question}[theorem]{Question}
\crefname{question}{Question}{Questions}
\numberwithin{equation}{theorem}
\renewcommand{\a}{\mathring{a}}
\renewcommand{\inf}{\mathrm{inf}}
\newcommand{\crys}{\mathrm{crys}}
\newcommand{\rig}{\mathrm{rig}}
\newcommand{\dR}{\mathrm{dR}}
\newcommand{\cyc}{\mathrm{cyc}}
\newcommand{\fh}{\mathrm{fh}}
\newcommand{\et}{\mathrm{\acute{e}t}}
\newcommand{\fin}{{\text{$\varphi$\rm{-fin}}}}
\newcommand{\icrys}{{\text{$\varphi$\rm{-isoc}}}}
\newcommand{\Ker}{\mathop{\mathrm{Ker}}\nolimits}
\newcommand{\Coker}{\mathop{\mathrm{Coker}}\nolimits}
\newcommand{\Ima}{\mathop{\mathrm{Im}}\nolimits}
\newcommand{\AKp}{\widetilde{\mathbb{A}}_K^{+}}
\newcommand{\ALp}{\widetilde{\mathbb{A}}_L^{+}}
\newcommand{\AK}{\widetilde{\mathbb{A}}_K}
\newcommand{\BKp}{\widetilde{\mathbb{B}}_K^{+}}
\newcommand{\BK}{\widetilde{\mathbb{B}}_K}
\newcommand{\Kinf}{\widehat{K_\infty}}
\DeclareMathOperator{\Hom}{Hom}
\DeclareMathOperator{\id}{id}
\DeclareMathOperator{\Rep}{Rep}
\DeclareMathOperator{\Vect}{Vect}
\DeclareMathOperator{\Gal}{Gal}
\DeclareMathOperator{\Frac}{Frac}
\DeclareMathOperator{\ev}{ev}
\DeclareMathOperator{\Mod}{Mod}
\DeclareMathOperator{\rank}{rank}
\DeclareMathOperator{\Fil}{Fil}
\subjclass[2020]{11S23}
\pgfplotsset{compat=1.18}
\date{}
\title{\vspace{-1cm}On the $(\varphi,\Gamma)$-modules corresponding to crystalline representations}
\author{Takumi Watanabe}
\address{Graduate School of Mathematical Sciences, The University of Tokyo, 
3-8-1 Komaba, Meguro-ku, Tokyo, 153-8914, Japan}
\email{takumi0426@g.ecc.u-tokyo.ac.jp}
\begin{document}

\begin{abstract}
  Let $K$ be a complete discrete valuation field of characteristic $0$ with perfect residue field of characteristic $p>0$. We introduce the notion of \emph{crystalline $(\varphi,\Gamma)$-modules over $\AKp$} and show that their category is equivalent to the category of crystalline $\mathbb{Z}_p$-representations of the absolute Galois group of $K$. In other words, we determine the $(\varphi,\Gamma)$-modules over $\AK$ that correspond to crystalline representations. This equivalence generalizes, in certain respects, that of L. Berger in the unramified case. 

\vspace{2ex}
  Soit $K$ un corps de valuation discr\`{e}te complet de caract\'{e}ristique $0$ \`{a} corps r\'{e}siduel parfait de caract\'{e}ristique $p>0$. Nous introduisons la notion de {\em $(\varphi,\Gamma)$-modules cristallins sur $\AKp$} et montrons que leur cat\'{e}gorie est \'{e}quivalente \`{a} la cat\'{e}gorie des $\mathbb{Z}_p$-repr\'{e}sentations cristallines du groupe de Galois absolu de $K$. Autrement dit, nous d\'{e}terminons les $(\varphi,\Gamma)$-modules sur $\AK$ qui correspondent aux repr\'{e}sentations cristallines. 
  Cette \'{e}quivalence g\'{e}n\'{e}ralise, \`{a} certains \'{e}gards, celle de L. Berger dans le cas non ramifi\'{e}.
\end{abstract}
\maketitle
\tableofcontents

\section*{Introduction}
Let $K$ be a mixed characteristic complete discrete valuation field with perfect residue field $k$ of characteristic $p$ (for example, a finite extension of $\mathbb{Q}_p$) and let $G_K$ denote its absolute Galois group $\Gal \left(\overline{K}/K\right)$. 
A {\em $p$-adic Galois representation} (resp. {\em free $\mathbb{Z}_p$-representation}) of $G_K$ is a finite dimensional $\mathbb{Q}_p$-vector space (resp. finite free $\mathbb{Z}_p$-module) equipped with a continuous linear $G_K$-action. 
One of the main theories to study them is the theory of $(\varphi,\Gamma)$-modules developed by J.-M. Fontaine (\cite{Fontaine1984,Fontaine1990}). Let us recall it for free $\mathbb{Z}_p$-representations. 
We introduce some notation. We put $K_0:=W(k)[1/p]$. We fix an algebraic closure $\overline{K}$ of $K$ and let $K_\infty$ denote the $p$-cyclotomic extension $\bigcup_{n}K(\zeta_{p^n})$ of $K$ in $\overline{K}$, where $\zeta_{p^n}$ is a primitive $p^n$-th root of unity.  
We define $\Gamma_K:=\Gal \left(K_\infty/K\right)$ and $H_K:=\Gal \left(\overline{K}/K_\infty\right)$. Let $\mathbb{C}_p$ denote the completion of $\overline{K}$ and let $\mathcal{O}_{\mathbb{C}_p}$ denote the ring of integers. 
Let $\mathcal{O}_{\mathbb{C}_p}^\flat$ denote the tilt $\varprojlim_{x\mapsto x^p} \mathcal{O}_{\mathbb{C}_p}/p \mathcal{O}_{\mathbb{C}_p}$ of $\mathcal{O}_{\mathbb{C}_p}$ and let $\mathbb{C}_p^\flat:=\Frac \mathcal{O}_{\mathbb{C}_p}^\flat$ be its field of fractions. 
Fontaine constructed subrings $\mathbb{A}_K,\mathbb{A}\subseteq W(\mathbb{C}_p^\flat)$ stable under the Frobenius map and the $G_K$-action. Since $\mathbb{A}_K$ is fixed by $H_K$, it has the $\Gamma_K$-action. When $K$ is absolutely unramified, i.e., $K=K_0$, $\mathbb{A}_K$ is equal to $W(k)((\mu))^{\wedge}_p$, where $\epsilon:=(1,\zeta_p,\zeta_{p^2},\dots )\in \mathcal{O}_{\mathbb{C}_p}^\flat$ and $\mu:=[\epsilon]-1\in A_\inf:=W(\mathcal{O}_{\mathbb{C}_p}^\flat)$. A {\em $(\varphi,\Gamma)$-module over $\mathbb{A}_K$} is a finite free $\mathbb{A}_K$-module $M$ equipped with a semi-linear continuous $\Gamma_K$-action and a $\Gamma_K$-equivariant $\mathbb{A}_K$-linear isomorphism $M\otimes_{\mathbb{A}_K,\varphi}\mathbb{A}_K \xrightarrow{\cong } M$. Since the $\Gamma_K$-action is determined by its topological generator, $(\varphi,\Gamma)$-modules are simpler than free $\mathbb{Z}_p$-representations of $G_K$. 
One of the main theorems on them is the following:

\begin{theorem}[{\cite[Th\'{e}or\`{e}me 3.4.3]{Fontaine1990}}]\label{thm:classical phi Gamma module}
There exists an equivalence of categories between the category of free $\mathbb{Z}_p$-representations of $G_K$ denoted by $\Rep_{\mathbb{Z}_p}(G_K)$ and the category of $(\varphi,\Gamma)$-modules over $\mathbb{A}_K$ denoted by $\Mod_{\varphi,\Gamma}^{\et}(\mathbb{A}_K)$ via the functors
\[\begin{tikzcd}[row sep=tiny]
     \Mod_{\varphi,\Gamma}^{\et}(\mathbb{A}_K)\arrow[r, <->, "\sim "]&\Rep_{\mathbb{Z}_p}(G_K) \\ 
     M\arrow[r, mapsto]\arrow[u, "\rotatebox{90}{$\in$}",phantom]& (M\otimes_{\mathbb{A}_K} \mathbb{A})^{\varphi=1} \arrow[u, "\rotatebox{90}{$\in$}",phantom]\\[-0.25cm]
     (T\otimes_{\mathbb{Z}_p}\mathbb{A})^{H_K}&\arrow[l, mapsto] T.
\end{tikzcd}\]
\end{theorem}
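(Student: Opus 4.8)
The plan is to deduce the full statement from the corresponding equivalence for $H_K$-representations, and then to install the $\Gamma_K$-action purely formally. First I would prove that the two functors
\[ N \longmapsto \bigl(N\otimes_{\mathbb{A}_K}\mathbb{A}\bigr)^{\varphi=1}, \qquad T\longmapsto \bigl(T\otimes_{\mathbb{Z}_p}\mathbb{A}\bigr)^{H_K} \]
define an equivalence between the category of étale $\varphi$-modules over $\mathbb{A}_K$ (forgetting $\Gamma_K$) and the category of free $\mathbb{Z}_p$-representations of $H_K$. Since $G_K$ acts on $\mathbb{A}$ compatibly with $\varphi$ and with the $\mathbb{A}_K$-module structure through $\Gamma_K=G_K/H_K$, and since the two functors are quasi-inverse, this equivalence automatically transports a continuous semilinear $\Gamma_K$-action on a $\varphi$-module over $\mathbb{A}_K$ to a continuous $G_K$-action on the associated $\mathbb{Z}_p$-module extending the $H_K$-action, and conversely; the compatibility with $\varphi$ and $\Gamma_K$ (resp. $G_K$) and the continuity are preserved because the functors are natural. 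This reduces everything to the $H_K$-statement together with a routine check of topologies.

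\textbf{The $H_K$-statement via dévissage along $p$.}
Modulo $p$ the ring $\mathbb{A}$ becomes the separable closure $\mathbb{C}_p^{\flat,\mathrm{sep}}$ of $\mathbb{E}_K:=\mathbb{A}_K/p\mathbb{A}_K$ inside $\mathbb{C}_p^\flat$, and by the theory of fields of norms there is a canonical isomorphism $\Gal(\mathbb{C}_p^{\flat,\mathrm{sep}}/\mathbb{E}_K)\cong H_K$, which I would invoke rather than reprove. The characteristic-$p$ case — étale $\varphi$-modules over $\mathbb{E}_K$ versus $\mathbb{F}_p$-representations of $\Gal(\mathbb{C}_p^{\flat,\mathrm{sep}}/\mathbb{E}_K)$ — is pure Artin–Schreier theory: one checks $\bigl(\mathbb{C}_p^{\flat,\mathrm{sep}}\bigr)^{\varphi=1}=\mathbb{F}_p$, that $\varphi-1$ is surjective on $\mathbb{C}_p^{\flat,\mathrm{sep}}$, and that every finite free $\varphi$-module over $\mathbb{C}_p^{\flat,\mathrm{sep}}$ admits a $\varphi$-fixed basis (étale descent for $\varphi$-modules over a separably closed field), which together make the two functors mutually inverse. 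Then I would lift: for a free $\varphi$-module $N$ over $\mathbb{A}_K$, an induction on $n$ shows that $\bigl(N/p^n\otimes\mathbb{A}/p^n\bigr)^{\varphi=1}$ is free of rank $\rank_{\mathbb{A}_K}N$ over $\mathbb{Z}/p^n$ with $\bigl(N/p^n\otimes\mathbb{A}/p^n\bigr)^{\varphi=1}\otimes_{\mathbb{Z}/p^n}\mathbb{A}/p^n\xrightarrow{\sim}N/p^n\otimes\mathbb{A}/p^n$, using the $p$-torsion case as the base and surjectivity of $\varphi-1$ for the inductive step; passing to the limit over $n$ (and verifying that $\varprojlim_n$ of the $\varphi=1$-parts computes $\bigl(N\otimes\mathbb{A}\bigr)^{\varphi=1}$) produces the $\mathbb{Z}_p$-representation. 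Symmetrically, for a free $\mathbb{Z}_p$-representation $T$, continuous Galois descent — the vanishing of $H^1_{\mathrm{cont}}(H_K,\mathrm{GL}_d(\mathbb{A}))$, again proved by dévissage from the characteristic-$p$ case — gives that $\bigl(T\otimes_{\mathbb{Z}_p}\mathbb{A}\bigr)^{H_K}$ is free over $\mathbb{A}_K$ with $\bigl(T\otimes\mathbb{A}\bigr)^{H_K}\otimes_{\mathbb{A}_K}\mathbb{A}\xrightarrow{\sim}T\otimes\mathbb{A}$, and the induced $\varphi$ is étale. Combining the two base-change isomorphisms shows that the unit and counit of the adjunction are isomorphisms.

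\textbf{Main obstacle.}
The crux is the pair of descent statements over $\mathbb{A}$: surjectivity of $\varphi-1$ on $\mathbb{A}^d$ (equivalently on $N\otimes_{\mathbb{A}_K}\mathbb{A}$) and the vanishing of $H^1_{\mathrm{cont}}(H_K,\mathrm{GL}_d(\mathbb{A}))$. Each follows formally from its mod-$p$ analogue, but the successive-approximation argument has to be run in the correct (weak) topology on $\mathbb{A}$, and one must control the growth of denominators in the Artin–Schreier solutions as one lifts from $\mathbb{A}/p^n$ to $\mathbb{A}/p^{n+1}$ so that the corrections converge $p$-adically; this is where $p$-adic completeness of $\mathbb{A}$ is used essentially, and where the identification of $\mathbb{A}/p\mathbb{A}$ with the separable closure of $\mathbb{E}_K$ (hence the fields-of-norms isomorphism $\Gal(\mathbb{C}_p^{\flat,\mathrm{sep}}/\mathbb{E}_K)\cong H_K$) must be brought in. The remaining items — exactness and functoriality of the two constructions, continuity and semilinearity of the induced $\Gamma_K$- and $G_K$-actions, and the verification that a $\Gamma_K$-action corresponds to a $G_K$-action — are straightforward once the descent is in place.
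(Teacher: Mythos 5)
The paper does not prove this theorem; it simply cites it as \cite[Th\'eor\`eme 3.4.3]{Fontaine1990} and uses it as a black box. Your sketch is the standard d\'evissage argument that Fontaine himself runs: reduce to the $H_K$-equivalence between \'etale $\varphi$-modules over $\mathbb{A}_K$ and free $\mathbb{Z}_p$-representations of $H_K$, install the $\Gamma_K$-action formally by naturality, identify $\mathbb{A}/p\mathbb{A}$ with the separable closure of $\mathbb{E}_K$ and $\Gal(\mathbb{E}_K^{\mathrm{sep}}/\mathbb{E}_K)$ with $H_K$ via fields of norms, settle the characteristic-$p$ case by Artin--Schreier and Lang/Hilbert~90, and lift along $p$ using $p$-adic completeness and the vanishing of $H^1_{\mathrm{cont}}(H_K,\mathrm{GL}_d(\mathbb{A}))$. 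This is the right and essentially the classical route.

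One small imprecision worth flagging: you speak of ``controlling the growth of denominators in the Artin--Schreier solutions'' when lifting from $\mathbb{A}/p^n$ to $\mathbb{A}/p^{n+1}$. No denominators actually appear: since $\mathbb{A}$ is $p$-torsion free and $p$-adically complete, given $y\in\mathbb{A}$ one solves $\varphi(x)-x=y$ by successive approximation entirely inside $\mathbb{A}$ (at each stage the residual error is divisible by $p$, so one divides by $p$ and applies the mod-$p$ surjectivity again); the issue is convergence in the $p$-adic, equivalently weak, topology, not denominator growth. The same remark applies to the twisted (cocycle) version used for $H^1_{\mathrm{cont}}(H_K,\mathrm{GL}_d(\mathbb{A}))=1$. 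With that adjustment the outline is sound.
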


We have explained a theory dealing with all $p$-adic Galois representations or all free $\mathbb{Z}_p$-representations. However, there exist some classes of them which are important in number theory. Main examples are {\em crystalline representations}, {\em semi-stable representations} and {\em de Rham representations}. In this paper, we consider the following question:

\begin{question}
     Which $(\varphi,\Gamma)$-modules correspond to crystalline representations?
\end{question}

As we mentioned before, $(\varphi,\Gamma)$-modules are simpler than $p$-adic Galois representations, so it is important and helpful to determine the $(\varphi,\Gamma)$-modules corresponding to crystalline representations. This question was studied by J.-M. Fontaine \cite{Fontaine1990}, N. Wach \cite{Wach1996,Wach1997} and P. Colmez \cite{Colmez1999}. Then L. Berger \cite{berger2004} finally constructed a theory of Wach modules when $K$ is absolutely unramified. Let $\mathbb{A}_K^+$ denote the ring $W(k)[[\mu]]$ and $\xi:=\mu/\varphi^{-1}(\mu)$. A {\em Wach module} is a finite free $\mathbb{A}_K^+$-module $N$ equipped with a continuous semi-linear $\Gamma_K$-action which is trivial on $N/\mu N$ and a $\Gamma_K$-equivariant $\mathbb{A}_K^+[1/\varphi(\xi)]$-linear isomorphism $N \otimes_{\mathbb{A}_K^+, \varphi}\mathbb{A}_K^+[1/\varphi(\xi)] \xrightarrow{\cong } N[1/\varphi(\xi)]$. One of the main theorems on Wach modules is the following.

\begin{theorem}[{\cite[Proposition III.4.2]{berger2004}}]
Assume $K$ is absolutely unramified. Then the category of Wach modules is equivalent to the category of free crystalline $\mathbb{Z}_p$-representations of $G_K$ via the functor $N\mapsto (N \otimes_{\mathbb{A}_K^+}\mathbb{A})^{\varphi=1}$ for any Wach module $N$.
\end{theorem}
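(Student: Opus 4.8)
The plan is to deduce the equivalence from \Cref{thm:classical phi Gamma module}, following Berger: I would factor the functor $N\mapsto(N\otimes_{\mathbb{A}_K^+}W(\mathbb{C}_p^\flat))^{\varphi=1}$ through $\Mod_{\varphi,\Gamma}^{\et}(\mathbb{A}_K)$, and then identify its essential image and prove it is fully faithful. For the factorization, given a Wach module $N$ I would set $M:=N\otimes_{\mathbb{A}_K^+}\mathbb{A}_K$ and observe that $\varphi(\xi)=\bigl((1+\mu)^p-1\bigr)/\mu$ reduces modulo $p$ to the unit $\mu^{p-1}$ of $k((\mu))$, hence is a unit in $\mathbb{A}_K$; therefore the Frobenius isomorphism of $N$ over $\mathbb{A}_K^+[1/\varphi(\xi)]$ extends to an $\mathbb{A}_K$-linear isomorphism $M\otimes_{\mathbb{A}_K,\varphi}\mathbb{A}_K\xrightarrow{\sim}M$, so $M\in\Mod_{\varphi,\Gamma}^{\et}(\mathbb{A}_K)$. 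Since $\mu$—hence $\varphi(\xi)$—is already a unit in $\mathbb{A}$, one checks using \Cref{thm:classical phi Gamma module} that $(N\otimes_{\mathbb{A}_K^+}W(\mathbb{C}_p^\flat))^{\varphi=1}=(M\otimes_{\mathbb{A}_K}\mathbb{A})^{\varphi=1}$, so the functor is $N\mapsto M$ followed by the equivalence of \Cref{thm:classical phi Gamma module}. It then remains to prove (a) the essential image of $N\mapsto M$ matches, under \Cref{thm:classical phi Gamma module}, exactly the crystalline representations, and (b) $N\mapsto M$ is fully faithful. Faithfulness is clear because $N\hookrightarrow M$, and for fullness it is enough to show that a $(\varphi,\Gamma)$-module over $\mathbb{A}_K$ has at most one Wach submodule $N$ with $N\otimes_{\mathbb{A}_K^+}\mathbb{A}_K=M$ — for then any morphism $M\to M'$ of $(\varphi,\Gamma)$-modules automatically carries the Wach submodule of $M$ into that of $M'$.

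For this uniqueness I would argue $\mu$-adically: if $N,N'\subseteq M$ are two Wach submodules generating $M$ over $\mathbb{A}_K$, comparing $\mathbb{A}_K^+$-bases gives $\mu^cN'\subseteq N\subseteq\mu^{-c}N'$ for some $c\ge0$, and I would drive $c$ down to $0$ by descending induction. The engine is that on any Wach module the $\Gamma_K$-action is trivial modulo $\mu$, so $\gamma-1$ maps the module into $\mu$ times itself for every $\gamma\in\Gamma_K$; feeding this together with $\varphi$-equivariance and the expansion of $\varphi(\mu)$ into an extremal element witnessing $c$ contradicts minimality unless $c=0$, that is $N=N'$. (This is the module-level shadow of the uniqueness of $D_\crys$ inside $D_\st$.)

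For the essential-image statement I would first show that $N\mapsto M$ lands in crystalline representations. Let $N$ be a Wach module and $V:=\bigl((N\otimes_{\mathbb{A}_K^+}W(\mathbb{C}_p^\flat))^{\varphi=1}\bigr)[1/p]$. Because $\mu=[\epsilon]-1$ lies in the kernel of $\theta\colon A_\inf\to\mathcal{O}_{\mathbb{C}_p}$ it has divided powers in $A_\crys$, so there is a $\varphi$- and $\Gamma_K$-equivariant map $\mathbb{A}_K^+\to A_\crys$, $\mu\mapsto[\epsilon]-1$, along which I would base-change $N$. As $\varphi(\xi)\equiv p\pmod\mu$, the Frobenius of $N$ descends to an isocrystal structure on $D:=(N/\mu N)[1/p]$ over $K_0$, carrying a filtration determined by the $\varphi$-structure of $N$. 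The decisive point is that the $\Gamma_K$-action on $N[1/p]$, being trivial modulo $\mu$, integrates: $\nabla:=\log\gamma$ converges after base change to $A_\crys[1/p]$ and defines a connection whose horizontal sections recover $D$; together with the $\varphi=1$-periods this yields the crystalline comparison isomorphism $B_\crys\otimes_{\mathbb{Q}_p}V\cong B_\crys\otimes_{K_0}D$, so $V$ is crystalline (with $D$ identified with $D_\crys(V)$ up to the evident duality).

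Conversely, given a free crystalline $\mathbb{Z}_p$-representation $T$ with $V=T[1/p]$, I would set $D=D_\crys(V)$ with its Frobenius and with the Hodge filtration from $D_{\dR}(V)$; after a Tate twist we may assume all Hodge--Tate weights of $V$ lie in $[0,h]$. Using the crystalline comparison isomorphism $B_\crys\otimes_{\mathbb{Q}_p}V\cong B_\crys\otimes_{K_0}D$, the overconvergence of $V$ (Cherbonnier--Colmez), and restriction of scalars along $\mathbb{A}_K^+\hookrightarrow A_\crys$, I would single out inside the $(\varphi,\Gamma)$-module $\mathbb{D}(V)$ a finite free $\mathbb{A}_K^+[1/p]$-submodule $\mathbb{N}(V)$ — the \emph{rational} Wach module — whose generators come from a lattice of $D$ normalized against the Hodge filtration, and then set $\mathbb{N}(T):=\mathbb{N}(V)\cap\mathbb{D}(T)$, where $\mathbb{D}(T)=(T\otimes_{\mathbb{Z}_p}\mathbb{A})^{H_K}$. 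I would then check that $\mathbb{N}(T)$ is finite free over $\mathbb{A}_K^+$ with $\mathbb{N}(T)\otimes_{\mathbb{A}_K^+}\mathbb{A}_K=\mathbb{D}(T)$; that $\varphi$ induces an isomorphism $\mathbb{N}(T)\otimes_{\mathbb{A}_K^+,\varphi}\mathbb{A}_K^+[1/\varphi(\xi)]\xrightarrow{\sim}\mathbb{N}(T)[1/\varphi(\xi)]$ (since the weights are $\le h$); and that the $\Gamma_K$-action is trivial modulo $\mu$ (because $\Gamma_K$ fixes $D$ pointwise and acts on $\mathbb{A}_K^+$ trivially modulo $\mu$). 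Then \Cref{thm:classical phi Gamma module} applied to $\mathbb{N}(T)\otimes_{\mathbb{A}_K^+}\mathbb{A}_K=\mathbb{D}(T)$ returns $T$, so $\mathbb{N}(T)$ is a Wach module with the prescribed associated representation. The main obstacle I expect is the finite generation of $\mathbb{N}(V)$ over $\mathbb{A}_K^+[1/p]$: a priori it is merely a submodule of $\mathbb{D}(V)$ with no visible bound on its generators, and establishing this boundedness — which amounts to Berger's finite-height theorem for crystalline representations — requires the precise form of the de Rham/crystalline comparison together with an estimate for the connection $\nabla$. Granting it, $N\mapsto M$ and $T\mapsto\mathbb{N}(T)$, transported through \Cref{thm:classical phi Gamma module}, are mutually quasi-inverse by the uniqueness above, which establishes the stated equivalence.
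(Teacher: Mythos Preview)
The paper does not prove this statement: it appears in the Introduction as a cited result of Berger (\cite[Proposition~III.4.2]{berger2004}), serving only as motivation for the paper's own main theorem in the ramified case. There is therefore no proof in the paper to compare your proposal against.

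Your outline is a reasonable sketch of Berger's original argument --- factoring through $\Mod_{\varphi,\Gamma}^{\et}(\mathbb{A}_K)$, the $\mu$-adic uniqueness of the Wach lattice for fullness, and the comparison with $D_\crys$ for the essential image --- and you correctly flag the finite-height theorem as the hard input. It is worth noting, though, that the paper's proof of its \emph{analogue} of this theorem (\cref{thm:Main theorem for crystalline}) takes a rather different route: fullness is handled by a direct $\mu$-adic argument similar in spirit to yours (\cref{prop:Tsuji Prop 76}), but essential surjectivity is not obtained by constructing $\mathbb{N}(T)$ via overconvergence and the crystalline comparison. Instead the paper invokes the Bhatt--Scholze equivalence between prismatic $F$-crystals and crystalline lattices, and produces the desired module by evaluating the crystal at $(\AKp,\varphi(\xi))$. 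This bypasses Cherbonnier--Colmez and Berger's finite-height theorem entirely, at the cost of importing the prismatic machinery.
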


Let $T$ be a free crystalline $\mathbb{Z}_p$-representation of $G_K$ and let $N$ be the corresponding Wach module. Then the $(\varphi,\Gamma)$-module over $\mathbb{A}_K$ corresponding to $T$ is given by $N\otimes_{\mathbb{A}_K^+}\mathbb{A}_K$. So this theorem says that when $K=K_0$, the $(\varphi,\Gamma)$-modules over $\mathbb{A}_K$ that correspond to crystalline representations can be determined by the existence of $\mathbb{A}_K^+$-lattices satisfying some conditions.

Our motivation is to remove the unramified condition. The difficulty is the construction of $\mathbb{A}_K^+$. There are some conditions which $\mathbb{A}_K^+$ should satisfy, but the author does not know how to construct $\mathbb{A}_K^+$ in the ramified case (cf. \cite[Remarque on page 393]{Wach1996}, \cite{WatanabeAKp}).
To settle this problem, we use $(\varphi,\Gamma)$-modules over $\AK:=W({\Kinf}^\flat)$, where ${\Kinf}^\flat$ is the field of fractions of the tilt $\mathcal{O}_{\Kinf}^\flat:= \varprojlim_{x\mapsto x^p}\mathcal{O}_{\Kinf}/p \mathcal{O}_{\Kinf}$ of $\mathcal{O}_{\Kinf}$. 
Similarly to \cref{thm:classical phi Gamma module}, the category of $(\varphi,\Gamma)$-modules over $\AK$ is equivalent to the category of free $\mathbb{Z}_p$-representations of $G_K$. 
In this case, we can use the ring $\AKp:=W(\mathcal{O}_{\Kinf}^\flat)$ as a substitute for $\mathbb{A}_K^+$. 

\begin{definition}[\cref{def:phi Gamma module over AKp and AK,def:crystalline (phi Gamma)-modules}]
A {\em crystalline $(\varphi,\Gamma)$-module over $\AKp$} is a finite free $\AKp$-module $N$ equipped with a semi-linear continuous $\Gamma_K$-action and a $\Gamma_K$-equivariant $\AKp[1/\varphi(\xi)]$-linear isomorphism $\varPhi_N\colon N\otimes_{\AKp,\varphi}\AKp[1/\varphi(\xi)] \xrightarrow{\cong } N[1/\varphi(\xi)] $ such that the following conditions hold.
\begin{enumerate}
     \item (Triviality modulo $\mu$) $(N/\mu)^{\Gamma_K}$ is a finite free $(\AKp/\mu)^{\Gamma_K}$-module and the canonical map
     \begin{align*}
          (N/\mu)^{\Gamma_K} \otimes_{(\AKp/\mu)^{\Gamma_K}} \AKp/\mu  \rightarrow N/\mu
     \end{align*}
     is an isomorphism.

     \item $(N/[\a])_\fin^{\Gamma_K}$ generates $N/[\a]$ as an $\AKp/[\a]$-module.
\end{enumerate}
Here, $\a\in \mathcal{O}_{\Kinf}^{\flat}$ is an element satisfying $v(\a)=p v_p(\pi)$, where $\pi$ is a uniformizer of $K$.
See \cref{def:phi-finite part} for the definition of $\varphi$-fin. 
\end{definition}

\begin{remark}\leavevmode
\begin{itemize}
     \item The condition (1) is an analog of the condition on Wach modules.
     \item The condition (2) can be changed to other conditions. See \cref{def:crystalline (phi Gamma)-modules}.
     \item The $\varphi$-finite part denoted by $\fin$ is a Frobenius analog of $K$-finite in the sense of \cite{Sen1980}. For the motivation of it, see \cref{cor:D_crysphifin,prop:B_crys fin is very small,thm:phi-finite part of B_crys and Bbar}.
\end{itemize}
\end{remark}
Our main theorem in this article is the following:

\begin{theorem}[{\cref{thm:Main theorem for crystalline}}]\label{thm:Main theorem for crystalline intro}
     There exists an equivalence of categories between the category of crystalline $(\varphi,\Gamma)$-modules over $\AKp$ denoted by $\Mod_{\varphi,\Gamma}^{\fh, \crys}(\AKp)$ and the category of free crystalline $\mathbb{Z}_p$-representations of $G_K$ denoted by $\Rep^\crys_{\mathbb{Z}_p}(G_K)$ via the functor
     \begin{align*}
          T\colon \Mod_{\varphi,\Gamma}^{\fh, \crys}(\AKp)  \xrightarrow{\sim} \Rep_{\mathbb{Z}_p}^{\crys}(G_K),\qquad N \mapsto T(N):=\bigl(N\otimes_{\AKp} W(\mathbb{C}_p^\flat)\bigr)^{\varphi=1}.
     \end{align*}
\end{theorem}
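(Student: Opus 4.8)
\smallskip
\noindent\emph{Proof strategy.} The plan is to route everything through the étale $(\varphi,\Gamma)$-module theory over $\AK$, using the crystalline conditions to single out a canonical $\AKp$-lattice, and to match the two directions against the period-ring characterisation of crystalline representations. First I would check that $T$ is well defined. Since $\epsilon-1$ is a nonzero element of the field ${\Kinf}^\flat$, the element $\mu=[\epsilon]-1$ is a unit in $\AK=W({\Kinf}^\flat)$, hence so is $\varphi(\xi)=\varphi(\mu)/\mu$; therefore, for a crystalline $(\varphi,\Gamma)$-module $N$ over $\AKp$, the base change $D:=N\otimes_{\AKp}\AK$ is a finite free $\AK$-module with continuous semilinear $\Gamma_K$-action such that $\varPhi_N$ induces a $\Gamma_K$-equivariant isomorphism $D\otimes_{\AK,\varphi}\AK\xrightarrow{\sim}D$, i.e.\ an étale $(\varphi,\Gamma)$-module over $\AK$. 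Using $W(\mathbb{C}_p^\flat)^{H_K}=\AK$ and the $\AK$-analogue of \cref{thm:classical phi Gamma module}, one gets that $T(N)=(D\otimes_{\AK}W(\mathbb{C}_p^\flat))^{\varphi=1}=(N\otimes_{\AKp}W(\mathbb{C}_p^\flat))^{\varphi=1}$ is a free $\mathbb{Z}_p$-representation of $G_K$ with $\rank_{\mathbb{Z}_p}T(N)=\rank_{\AKp}N$ and $D(T(N))=D$, and this construction is visibly functorial. Thus \cref{thm:Main theorem for crystalline intro} reduces to: (a) $T(N)$ is crystalline for every crystalline $(\varphi,\Gamma)$-module $N$; (b) for every free crystalline $\mathbb{Z}_p$-representation $\mathcal T$ the étale $(\varphi,\Gamma)$-module $D(\mathcal T)$ over $\AK$ admits an $\AKp$-lattice which is a crystalline $(\varphi,\Gamma)$-module; and (c) such a lattice is unique and functorial in $D(\mathcal T)$, giving full faithfulness.

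For (a) I would extend scalars from $\AKp$ to the crystalline period rings. The ring $\AKp/\mathfrak{q}_\infty$ sits inside a ring of $B_{\mathrm{dR}}^{+}$-type whose $\varphi$-finite part carries a $W(k)$-structure, so condition (ii) together with the Frobenius on $N$ produces a finite free $K_0$-module of rank $\rank_{\AKp}N$ mapping to $D_{\mathrm{crys}}(T(N)[1/p])=(T(N)\otimes_{\mathbb{Q}_p}B_{\mathrm{crys}})^{G_K}$; condition (i) (triviality modulo $\mu$) is used to show that, after tensoring with the appropriate completed period ring, the $\Gamma_K$-action on $N$ is as close to trivial as the cyclotomic twists permit, which forces a $G_K$-equivariant comparison isomorphism between $N$ and $T(N)$ over that ring. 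Descending this data yields $\dim_{K_0}D_{\mathrm{crys}}(T(N)[1/p])\ge\rank_{\AKp}N=\dim_{\mathbb{Q}_p}T(N)[1/p]$; since the reverse inequality always holds, $T(N)[1/p]$ is crystalline, and $T(N)$ is then a free crystalline $\mathbb{Z}_p$-representation.

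For (c) I would first observe that the category of crystalline $(\varphi,\Gamma)$-modules over $\AKp$ is closed under tensor products, duals and internal Homs: conditions (i) and (ii) are preserved because taking $\Gamma_K$-invariants of a module trivial modulo $\mu$ and taking the $\varphi$-finite part both commute with the relevant finite-projective base changes, and $T$ intertwines these operations with their analogues on representations. Full faithfulness then reduces to the identity $N^{\varphi=1,\,\Gamma_K}=(N\otimes_{\AKp}W(\mathbb{C}_p^\flat))^{\varphi=1,\,G_K}$ for crystalline $N$: the inclusion $\subseteq$ is clear, and for $\supseteq$ a $G_K$-invariant vector of $T(N)$ generates a trivial subrepresentation, hence a $(\varphi,\Gamma)$-equivariant map $\AK\to D$, which one shows factors through $\AKp\hookrightarrow N$ by noting that conditions (i) and (ii) recover $N$ inside $D$ as the submodule cut out by the $\mu$-adic and $\mathfrak{q}_\infty$-adic boundary data (here one uses $(\AKp/\mathfrak{q}_\infty)_\fin^{\Gamma_K}=W(k)$, i.e.\ \cref{prop:G-fixed part of Bbar}). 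Uniqueness of the lattice in (c) follows from the same canonical description of $N$ as a submodule of $D$.

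The genuinely hard part — and the step I expect to be the main obstacle — is (b), the construction of the lattice. I would attach to $\mathcal T$ a Breuil--Kisin--Fargues-type $A_\inf$-module (or work directly with $D_{\mathrm{crys}}(\mathcal T)$ and a Wach-type recipe) and then descend along the faithfully flat map $\AKp\to A_\inf$, using $A_\inf^{H_K}=\AKp$ together with continuity of the $H_K$-action in the style of Fontaine's dévissage, to obtain a finite free $\AKp$-submodule $N\subseteq D(\mathcal T)$. One must then verify that $N$ is $\varphi$- and $\Gamma_K$-stable; that $\varPhi_N$ is an isomorphism after inverting $\varphi(\xi)$, which holds because the Hodge--Tate weights of $\mathcal T$ are bounded and so the elementary divisors of the relative Frobenius are powers of $\varphi(\xi)$; that condition (i) holds, which is the ramified analogue of Wach's computation and follows from the $A_{\mathrm{crys}}$-comparison together with Sen theory, the associated $\mathbb{C}_p$-representation being Hodge--Tate so that the Sen operator kills the associated graded and the $\Gamma_K$-action on $N/\mu N$ becomes trivial after taking invariants; and that condition (ii) holds because $(N/\mathfrak{q}_\infty)_\fin^{\Gamma_K}$ recovers $D_{\mathrm{crys}}(\mathcal T[1/p])$ with a $G_K$-stable lattice, which is finite free over $W(k)$ of full rank by Fontaine's finiteness theorem for crystalline representations. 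This is precisely where the ramified case diverges from Berger's: there is no clean substitute for $\mathbb{A}_K^{+}$, so one cannot simply imitate the construction of Wach modules and must instead build $N$ over $\AKp$ and check the two boundary conditions at $\mu$ and at $\mathfrak{q}_\infty$ separately — which is exactly where all the $p$-adic Hodge theory (comparison isomorphisms, finiteness of $D_{\mathrm{crys}}$, Sen theory, boundedness of Hodge--Tate weights) enters.
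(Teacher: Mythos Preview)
Your overall architecture (well-definedness, full faithfulness, essential surjectivity) matches the paper's, but the execution of (b) and (c) diverges substantially, and (b) contains a genuine gap.

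For essential surjectivity the paper does \emph{not} descend a Breuil--Kisin--Fargues module from $A_\inf$. It invokes the Bhatt--Scholze equivalence $\Vect^\varphi(\mathcal{O}_{K,\Prism},\mathcal{O}_\Prism)\simeq\Rep_{\mathbb{Z}_p}^{\crys}(G_K)$ and defines the quasi-inverse by evaluating a prismatic $F$-crystal at the prism $(\AKp,\varphi(\xi))$. Conditions (i) and (ii) then fall out of the crystal property applied to explicit morphisms of prisms $(\AKp,\varphi(\xi))\to\bigl((\AKp/\mu)^{\Gamma_K},p\bigr)\to(\AKp/\mu,p)$ and $(W(k),p)\to(\AKp/[a],p)$ respectively. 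Your route via $H_K$-descent from $A_\inf$ is closer to Du's arithmetic BKF approach, which the paper explicitly contrasts with, but your verification of condition (i) does not go through as written: Sen theory controls $N/\xi$ (a single Hodge--Tate fibre), whereas $N/\mu$ encodes all the fibres at $\varphi^n(\xi)$ for $n\ge 0$ simultaneously, and ``the Sen operator kills the associated graded'' does not produce the required isomorphism $(N/\mu)^{\Gamma_K}\otimes_{(\AKp/\mu)^{\Gamma_K}}\AKp/\mu\xrightarrow{\sim}N/\mu$. Without the prismatic input (or a much more detailed argument), this step is a genuine gap.

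For fullness the paper does not pass through internal Homs or the identity $N^{\varphi=1,\Gamma_K}=T(N)^{G_K}$. It proves directly (\cref{prop:Tsuji Prop 76}, using only condition (i)) that any $\Gamma_K$-equivariant $\AKp[1/\mu]$-linear map between modules trivial mod $\mu$ sends lattice to lattice; the key input is the vanishing $(\AKp/\mu)(r)^{\Gamma_K}=0$ for $r\neq 0$ (\cref{lem:Tsuji Lemma 74}), reduced via the product formula $\mu=\prod_{n\ge 0}\varphi^{-n}(\xi)$ to Tate's $\mathcal{O}_{\Kinf}(r)^{\Gamma_K}=0$. Your sentence ``conditions (i) and (ii) recover $N$ inside $D$ as the submodule cut out by the $\mu$-adic and $\mathfrak{q}_\infty$-adic boundary data'' is precisely the uniqueness statement that needs proving, and is not justified. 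Finally, in (a) your attribution of roles is off: the $G_K$-equivariance of the map $(N/\mathfrak{q}_\infty)_\fin^{\Gamma_K}[1/p]\hookrightarrow N\otimes_{\AKp}B_{\crys}^+$ comes from condition (ii) alone via an explicit convergent product of Frobenius matrices (\cref{lem:convergence in Bcrys}); condition (i) enters only through the Tate-twist reduction to the effective case, and the comparison $N\otimes A_\inf[1/\mu]\cong T(N)\otimes A_\inf[1/\mu]$ holds for any $(\varphi,\Gamma)$-module over $\AKp$, crystalline or not.
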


Similarly to Wach modules, for any crystalline $(\varphi,\Gamma)$-module $N$ over $\AKp$, $N\otimes_{\AKp}\AK$ is the $(\varphi,\Gamma)$-module over $\AK$ which corresponds to $T(N)$. Therefore, this theorem says that the $(\varphi,\Gamma)$-modules over $\AK$ corresponding to crystalline representations can be determined by the existence of $\AKp$-lattices satisfying some conditions. 

Let us explain our strategy to prove the theorem. We first show the well-definedness of the functor. Let $N$ be a crystalline $(\varphi,\Gamma)$-module over $\AKp$. We see that $N\otimes_{\AKp}\AK$ is the $(\varphi,\Gamma)$-module over $\AK$ and hence $T(N)$ is a free $\mathbb{Z}_p$-representation of $G_K$. 
We tried to show that it is crystalline from the condition (1), but we failed.
So we follow H. Du's idea (\cite{Du2022}), which leads to the condition (2). One of the reasons why we think of the $\varphi$-finite part is that we do not know whether $(\AKp/\mathfrak{q}_\infty)^{\Gamma_K}=W(k)$ or not, but we know that $(\AKp/\mathfrak{q}_\infty)_\fin^{\Gamma_K}=W(k)$ (see \cref{prop:G-fixed part of Bbar}). Note that Du claims the first equality in \cite[Theorem 4.9. (2)]{Du2022}, but it seems that there is a gap in the proof.
We can easily show the faithfulness of the functor, and we deduce the fullness from the condition (1). This argument is due to T. Tsuji (\cite[Proposition 76]{Tsuji_simons}). In order to show that this functor is  essentially surjective and to construct the quasi-inverse functor, we use an equivalence of categories between the category of prismatic $F$-crystals in $\mathcal{O}_\Prism$-modules on $\mathcal{O}_{K,\Prism}$ and the category of free crystalline $\mathbb{Z}_p$-representations of $G_K$ (\cite[Theorem 5.6]{BhattScholze2023}). To be more precise, we prove that for any prismatic $F$-crystal $\mathcal{N}$ in $\mathcal{O}_\Prism$-modules on $\mathcal{O}_{K,\Prism}$, $\mathcal{N}\bigl(\AKp, \varphi(\xi)\bigr)$ is a crystalline $(\varphi,\Gamma)$-module over $\AKp$. 
The following commutative diagram summarizes the relationship of the categories, where $\Vect^\varphi(\mathcal{O}_{K,\Prism}, \mathcal{O}_{\Prism})$ denotes the category of prismatic $F$-crystals in $\mathcal{O}_\Prism$-modules on $\mathcal{O}_{K,\Prism}$.
\[\begin{tikzcd}
\Mod_{\varphi,\Gamma}^{\et}(\AK) \arrow[rr, "\text{\cite{Fontaine1990}}","\cong "']&{}&\Rep_{\mathbb{Z}_p}(G_K) \\ 
\Mod_{\varphi,\Gamma}^{\fh, \crys}(\AKp)\arrow[rr, "\text{\cref{thm:Main theorem for crystalline intro}}","\cong "']\arrow[u, "\otimes_{\AKp}\AK",hook]&{} &\Rep_{\mathbb{Z}_p}^{\crys}(G_K) \arrow[u, hook]\\ 
&\Vect^{\varphi}(\mathcal{O}_{K,\Prism},\mathcal{O}_{\Prism})\arrow[ru, "\text{\cite[Theorem 5.6]{BhattScholze2023}}"',"\cong"]\arrow[lu, "\text{evaluate at $(\AKp,\varphi(\xi))$}","\cong "'] &
\end{tikzcd}\]

Our theorem has connections with arithmetic Breuil-Kisin-Fargues modules developed in \cite{Du2022}. The biggest difference between them is the coefficient rings. 
Arithmetic Breuil-Kisin-Fargues modules use the ring $A_\inf:=W(\mathcal{O}_{\mathbb{C}_p}^\flat)$ and crystalline $(\varphi,\Gamma)$-modules use the ring $\AKp:=W(\mathcal{O}_{\Kinf}^{\flat})$. As a result, we consider $G_K$-actions in the former case, whereas we consider $\Gamma_K$-actions in the latter case. In this sense, crystalline $(\varphi,\Gamma)$-modules are simpler than arithmetic Breuil-Kisin-Fargues modules. Another difference is that \cite{Du2022} relies on the Fargues-Fontaine curve and filtered $(\varphi,N)$-modules, whereas we rely on prisms.

Let us also mention the theory of $(\varphi,\widehat{G})$-modules developed in \cite{Liu2010}. The category of $(\varphi,\widehat{G})$-modules is equivalent to the category of free semi-stable $\mathbb{Z}_p$-representations of $G_K$. Using this theory, we can also classify free crystalline $\mathbb{Z}_p$-representations of $G_K$, see for example \cite[Theorem 3.7]{Ozeki2018} and \cite[F.11 Theorem]{EmertonGee}. 
The main difference between $(\varphi,\widehat{G})$-modules and crystalline $(\varphi,\Gamma)$-modules is that ($\varphi,\widehat{G}$)-modules use the field generated by $p$-power roots of a uniformizer $\pi\in \mathcal{O}_K$ over $K$. 
Since it depends on the choice of $p$-power roots of a uniformizer, the theory of $(\varphi,\widehat{G})$-modules does not behave well with respect to the change of $p$-adic field $K$ to its finite extension. However, our theory behaves well in this respect, see \cref{prop:scalar extension of crystalline phi gamma module}. Moreover, this field is not a Galois extension of $K$. As a result, we consider its Galois closure and use descent arguments in that theory.
In contrast, we do not need descent arguments in our theory since we only use the $p$-cyclotomic extension of $K$.

Finally, we note that we will give similar results for semi-stable representations and de Rham representations in the forthcoming papers.

%\subsection*{Outline of the paper}
%In section 1, we summarize some facts on topological modules (subsection 1.1), semi-linear representations (subsection 1.2) and $(\varphi,\Gamma)$-modules (subsection 1.3). 
%In section 2, we prove the main theorem. In the first subsection, we study the triviality modulo $\mu$. In subsection 2.2, we introduce the ring $\overline{B}$ defined in \cite[D\'{e}finition 1.10.14.]{FarguesFontaine2018} and its analogs. We also introduce the notion of $\varphi$-finite part defined in \cite[Lemma 4.11.]{Du2022}. In subsection 2.3, we define the crystalline $(\varphi,\Gamma)$-modules over $\AKp$ and construct the functor from the category of crystalline $(\varphi,\Gamma)$-module over $\AKp$ to the category of free crystalline $\mathbb{Z}_p$-representations of $G_K$. In subsection 2.4, we recall some facts on prisms. Finally, we show the equivalence of categories in subsection 2.5. 

\subsection*{Acknowledgements}
I would like to express my sincere gratitude to my advisor Takeshi Tsuji for everything he has done for me including guiding me to $p$-adic Galois representations and $p$-adic Hodge theory, suggesting many ideas, pointing out some mistakes and checking the draft. This article would not exist without his help. I am deeply grateful to Ahmed Abbes for his advice on academic writing. 
I would also like to thank Abhinandan for kindly answering my questions about Wach modules and checking the draft. 
I heartily thank Laurent Berger for informing me of \cite[Remarque on page 393]{Wach1996} and the analogy between the $\varphi$-finite part and \cite[Remark 2.6.5]{BFYSB2022}. 
I am also grateful to Adriano Marmora for pointing out the importance of \cref{prop:scalar extension of crystalline phi gamma module}.
This work was supported by the WINGS-FMSP program at Graduate School of Mathematical Sciences, the University of Tokyo. This work was partially supported by JSPS KAKENHI Grant Number 25KJ1165.

\subsection*{Notation}
Throughout this paper, all rings are assumed commutative and unital. We fix a prime number $p$ and a $p$-adic field $K$, i.e., a complete discrete valuation field $K$ of characteristic $0$ with perfect residue field $k$ of characteristic $p$. 
We fix an algebraic closure $\overline{K}$ of $K$ and let $\mathbb{C}_p$ denote the completion of $\overline{K}$. For any ring $R$, $W(R)$ denotes the ring of $p$-typical Witt vectors, and for any $x\in R$, $[x]\in W(R)$ denotes the Teichm\"{u}ller lift of $x$. We define $K_0:=W(k)[1/p]$ and $P_0:=W(\overline{k})[1/p]$, where $\overline{k}$ is the residue field of $\mathbb{C}_p$. 
For any subextension $L$ in $\mathbb{C}_p/K_0$, let $\mathcal{O}_L$ denote the ring of integers and $\mathfrak{m}_{L}$ the maximal ideal. Let $v_p$ denote the valuation of $\mathbb{C}_p$ normalized by $v_p(p)=1$. We fix a uniformizer $\pi$ of $K$. 
Let $G_K$ denote the absolute Galois group $\Gal \left(\overline{K}/K\right)$ of $K$. We fix a compatible sequence of $p$-power roots of unity $(\zeta_{p^n})_n$ in $\overline{K}$ and let $K_\infty$ denote the $p$-cyclotomic extension $\cup_{n=1}^\infty K(\zeta_{p^n})$ of $K$ and $\Kinf$ its completion. 
We define $\Gamma_K:=\Gal \left(K_\infty/K\right)$ and $H_K:=\Gal \left(\overline{K}/K_\infty\right)$. Let $k_\infty$ denote the residue field of $\Kinf$. 

Let $\mathcal{O}_{\Kinf}^\flat $ denote the tilt $ \varprojlim_{x\mapsto x^p}\mathcal{O}_{\Kinf}/p \mathcal{O}_{\Kinf}$ of $ \mathcal{O}_{\Kinf}$ and $\Kinf^\flat:=\Frac (\mathcal{O}_{\Kinf}^\flat)$ the field of fractions of $\mathcal{O}_{\Kinf}^\flat$. Similarly, we define $\mathcal{O}_{\mathbb{C}_p}^\flat$ and $\mathbb{C}_p^\flat$. Let $v$ denote the valuation of $\mathbb{C}_p^\flat$ induced by $v_p$.
We set $\AKp:=W(\mathcal{O}_{\Kinf}^\flat)$, $\AK:=W(\Kinf^\flat)$, and $A_\inf:=W(\mathcal{O}_{\mathbb{C}_p}^\flat)$, and we write $\varphi$ for their Frobenius endomorphisms. Note that $\AKp,\AK$ are the notation of \cite{Colmez1999} and Colmez writes $\widetilde{\mathbb{A}}$ for $A_\inf$. We also set $\BKp:=\AKp[1/p]$, $\BK:=\AK[1/p]$, and $B_\inf:=A_\inf[1/p]$.
Let $\mathfrak{m}_{\Kinf}^\flat$ (resp. $\mathfrak{m}_{\mathbb{C}_p}^\flat$) denote the maximal ideal of $\mathcal{O}_{\Kinf}^\flat$ (resp. $\mathcal{O}_{\mathbb{C}_p}^\flat$) and $W(\mathfrak{m}_{\Kinf}^\flat)$ (resp. $W(\mathfrak{m}_{\mathbb{C}_p}^\flat)$) the kernel of the homomorphism $\AKp  \twoheadrightarrow  W(k_\infty)$ (resp. $A_\inf \twoheadrightarrow  W(\overline{k})$). 
We define $\epsilon:=(1,\zeta_p,\zeta_{p^2},\dots )\in \mathcal{O}_{\Kinf}^\flat$ and $\mu:=[\epsilon]-1\in \AKp$; then $\xi:=\mu/\varphi^{-1}(\mu)\in \AKp$ is a generator of the kernel of Fontaine's map $\theta\colon A_\inf  \rightarrow \mathcal{O}_{\mathbb{C}_p}$. We fix an element $\a\in \mathfrak{m}_{\Kinf}^\flat$ such that $v(\a)=p v_p(\pi)$. Let $A_\crys, B_\crys^+, B_\crys, B_\dR^+$, and $B_\dR$ denote the $p$-adic period rings defined by Fontaine (\cite{Fontaine1982,Fontaine1984}). We define $t:=\log [\epsilon]:=\sum_{i=1}^\infty (-1)^{i+1}\mu^i/i\in A_\crys$ as usual.

For any set $X$ equipped with an action of a group $G$, we define 
\begin{align*}
    X^G:=\{x\in X\mid gx=x \text{ for all }g\in G\}.
\end{align*}
When $X$ is a ring, every action of a group $G$ is always assumed to be compatible with the ring structure, i.e., we assume that the map
\begin{align*}
     X  \rightarrow X,\quad x\mapsto gx
\end{align*}
is a ring homomorphism for any $g\in G$.
When $X$ and $G$ have topologies, we say the action of $G$ on $X$ is continuous if the map
\begin{align*}
     G\times X  \rightarrow X,\qquad (g,x)\mapsto gx
\end{align*}
is continuous, where the left-hand side is equipped with the product topology. For any module $M$ equipped with an endomorphism $\varphi\colon M \rightarrow M$ and $h, d\in \mathbb{Z}_{\geq 0}$, we define 
\begin{align*}
    M^{\varphi^h=p^d}:=\{x\in M\mid \varphi^h(x)=p^d x\}.
\end{align*}
When $M$ is a $\mathbb{Z}[1/p]$-module, we also define $M^{\varphi^h=p^d}$ for any $h\in \mathbb{Z}_{\geq 0}$ and $d\in \mathbb{Z}$ in a similar way.

\newpage
\section{Preliminaries}
In this section, we summarize basic properties on canonical topologies, semi-linear representations and $(\varphi,\Gamma)$-modules. 
\subsection{Canonical topologies} 

\begin{lemma}\label{lem:topology on finitely generated module}
Let $B$ be a topological ring and let $M$ be a finitely generated $B$-module. Consider the quotient topology on $M$ via a $B$-linear surjection
\begin{align*}
     q\colon B^n \twoheadrightarrow M,
\end{align*}
where $B^n$ has the product topology. Then, the topology on $M$ does not depend on the choice of the surjection $q$.
\end{lemma}
\begin{proof}
Let $q'\colon B^m \twoheadrightarrow M$ be another $B$-linear surjective homomorphism. We can construct a $B$-linear homomorphism $f\colon B^n \rightarrow B^m$ such that $q=q'\circ f$.
\[\begin{tikzcd}
B^n\arrow[rr, "f"]\arrow[rd, "q"',->>]&{} &B^m\arrow[ld, "q'",->>]\\ 
&M&
\end{tikzcd}\]
Since $f$ can be expressed by a matrix over $B$, $f$ is continuous with respect to the product topologies on $B^n$ and $B^m$. Thus any open subset in $M$ with respect to the quotient topology induced by $q'$ is also open with respect to the quotient topology induced by $q$. The same argument with $q$ and $q'$ interchanged shows the other implication, so this completes the proof.
\end{proof}

\begin{definition}\label{def:definition of the canonical topology}
For a topological ring $B$ and a finitely generated $B$-module $M$, we call the topology on $M$ defined in \cref{lem:topology on finitely generated module} the \emph{canonical topology}\footnote{This term is not standard. }.
\end{definition}

In the rest of this subsection, let $B$ be a topological ring.  The goal of this subsection is to show that every finitely generated $B$-module is a topological $B$-module with respect to the canonical topology. Note that if $M$ is a finite free $B$-module, it is clear.

\begin{proposition}
Let $M$ and $N$ be finitely generated $B$-modules and let $q\colon M\twoheadrightarrow N$ be a surjective $B$-linear homomorphism. Then the canonical topology on $N$ is the same as the quotient topology induced from $M$.
\end{proposition}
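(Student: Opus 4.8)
The plan is to reduce the statement to the transitivity of quotient topologies, using \cref{lem:topology on finitely generated module} to pass freely between different presentations. First I would fix surjective $B$-linear maps $p_M\colon B^n\twoheadrightarrow M$ and $p_N\colon B^m\twoheadrightarrow N$ defining the canonical topologies on $M$ and $N$; by construction $p_M$ and $p_N$ are quotient maps. The composite $q\circ p_M\colon B^n\twoheadrightarrow N$ is again a surjection from a finite free $B$-module onto $N$, so by \cref{lem:topology on finitely generated module} the quotient topology it induces on $N$ coincides with the canonical topology on $N$ (the one defined via $p_N$). Hence it suffices to show that the quotient topology on $N$ coming from $q\colon M\twoheadrightarrow N$, where $M$ carries its canonical topology, agrees with the quotient topology on $N$ coming from $q\circ p_M$.

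For this last identification I would simply unwind the definitions. A subset $U\subseteq N$ is open for the quotient topology via $q$ iff $q^{-1}(U)$ is open in $M$; since $M$ carries the quotient topology via $p_M$, i.e.\ $p_M$ is a quotient map, this holds iff $p_M^{-1}(q^{-1}(U))=(q\circ p_M)^{-1}(U)$ is open in $B^n$, which is precisely the condition that $U$ be open for the quotient topology via $q\circ p_M$. Combining this with the previous paragraph yields the claim.

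I do not expect a real obstacle here; the only point needing a little care is that the argument uses that $p_M$ is a quotient map, not merely a continuous surjection — and this is exactly how the canonical topology on $M$ was defined. It is also worth noting that nothing in the proof uses that $B$ is a topological ring in an essential way or that the canonical topology makes $M$ a topological $B$-module (which is only established later in this subsection): the assertion is purely about underlying topological spaces and set maps, so invoking it at this stage is harmless.
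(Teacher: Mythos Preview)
Your proof is correct and follows exactly the same approach as the paper: compose a presentation $B^n\twoheadrightarrow M$ with $q$ to obtain a presentation $B^n\twoheadrightarrow N$, invoke \cref{lem:topology on finitely generated module}, and use transitivity of quotient topologies. The paper compresses this into a single sentence, but the content is identical.
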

\begin{proof}
Let $B^n \twoheadrightarrow M$ be a $B$-linear surjection. Then, the composite $B^n\twoheadrightarrow M \twoheadrightarrow N$ is a $B$-linear surjective homomorphism. By \cref{def:definition of the canonical topology}, the canonical topology on $M$ is the quotient topology induced by $B^n \twoheadrightarrow M$ and the canonical topology on $N$ is the quotient topology induced by $B^n\twoheadrightarrow M \twoheadrightarrow N$. This implies the assertion.
\end{proof}

\begin{proposition}\label{prop:linear map is continuous}
Let $M$ and $N$ be finitely generated $B$-modules and let $f\colon M \rightarrow N$ be a $B$-linear homomorphism. Then $f$ is continuous with respect to the canonical topologies.
\end{proposition}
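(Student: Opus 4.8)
The plan is to reduce the continuity of $f$ to the continuity of a matrix map between free modules, using the universal property of the quotient topology. Choose surjections $q_M\colon B^n \twoheadrightarrow M$ and $q_N\colon B^m \twoheadrightarrow N$; by \cref{lem:topology on finitely generated module} these define the canonical topologies on $M$ and $N$. Since $B^n$ is free, the $B$-linear map $f\circ q_M\colon B^n \to N$ lifts along the surjection $q_N$ to a $B$-linear map $\widetilde{f}\colon B^n \to B^m$ satisfying $q_N\circ \widetilde{f} = f\circ q_M$ (send each standard basis vector of $B^n$ to any preimage under $q_N$ of its image under $f\circ q_M$, and extend linearly).

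Next I would note that $\widetilde{f}$ is given by an $m\times n$ matrix over $B$, so each component of $\widetilde{f}$ is a finite sum of products of the coordinate functions with fixed elements of $B$; since $B$ is a topological ring, addition and multiplication are continuous, hence $\widetilde{f}\colon B^n \to B^m$ is continuous for the product topologies. Consequently $q_N\circ \widetilde{f}\colon B^n \to N$ is continuous, being the composite of the continuous map $\widetilde{f}$ with the (continuous) quotient map $q_N$. Therefore $f\circ q_M = q_N\circ \widetilde{f}$ is continuous.

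Finally, $M$ carries the quotient topology induced by $q_M$, so by the universal property of the quotient topology a map out of $M$ is continuous if and only if its precomposition with $q_M$ is continuous. Since $f\circ q_M$ has just been shown to be continuous, $f$ is continuous, as desired. The only step that uses anything beyond formal nonsense is the existence of the lift $\widetilde{f}$, which is where the freeness (equivalently, projectivity) of $B^n$ enters; this is not really an obstacle, and the rest of the argument is purely formal.
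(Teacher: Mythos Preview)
Your proof is correct and follows essentially the same approach as the paper: both lift $f$ to a $B$-linear map $\widetilde{f}\colon B^n \to B^m$ via the freeness of $B^n$, observe that $\widetilde{f}$ is continuous because it is given by a matrix over the topological ring $B$, and then conclude by the universal property of the quotient topology. The paper compresses all of this into a single commutative diagram, but the content is identical.
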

\begin{proof}
Let $q_M\colon B^m \twoheadrightarrow M$ and $q_N\colon B^n \twoheadrightarrow N$ be $B$-linear surjections. Then there exists a $B$-linear homomorphism $\psi\colon B^m  \rightarrow B^n$ such that $q_N \circ \psi=f \circ q_M$.
\[\begin{tikzcd}
B^m \arrow[r,"\exists\psi",dashed]\arrow[d,->>,"q_M"]&B^n \arrow[d,->>,"q_N"] \\ 
M \arrow[r,"f"]&N
\end{tikzcd}\]
Since $\psi$ is continuous, $(f \circ q_M)^{-1}(U)=(q_N\circ \psi)^{-1}(U)$ is open in $B^m$ for any open subset $U$ in $N$. Since the canonical topology on $M$ is the quotient topology induced by $q_M$, this shows that $f^{-1}(U)$ is open in $M$. Thus $f$ is continuous.
\end{proof}

\begin{lemma}\label{lem:open map}
Let $M$ be a finitely generated $B$-module equipped with the canonical topology. Then every $B$-linear surjective map $q\colon B^n \twoheadrightarrow M$ is an open map.
\end{lemma}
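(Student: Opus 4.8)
The plan is to reduce the openness of $q$ to the elementary observation that the $q$-preimage of the image of an open set is a union of translates of that open set. First, by \cref{lem:topology on finitely generated module} the canonical topology on $M$ does not depend on the chosen surjection, so it coincides with the quotient topology induced by the given surjection $q\colon B^n\twoheadrightarrow M$; in particular a subset $V\subseteq M$ is open in the canonical topology if and only if $q^{-1}(V)$ is open in $B^n$ (equipped with the product topology).

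Now let $U\subseteq B^n$ be open. By the previous paragraph, to prove $q(U)$ is open it suffices to prove that $q^{-1}\bigl(q(U)\bigr)$ is open in $B^n$. The key identity is
\[ q^{-1}\bigl(q(U)\bigr)=U+\Ker q=\bigcup_{k\in\Ker q}(U+k), \]
which follows purely from the additivity of $q$: one has $x\in q^{-1}(q(U))$ if and only if $q(x-u)=0$ for some $u\in U$, i.e.\ if and only if $x\in U+\Ker q$. Since $B$ is a topological ring, $B^n$ is a topological abelian group under addition, so for each $k\in\Ker q$ the translation $y\mapsto y+k$ is a homeomorphism of $B^n$ and hence $U+k$ is open; therefore $q^{-1}(q(U))$ is a union of open sets and so is open. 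Thus $q(U)$ is open, and $q$ is an open map.

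As for the main obstacle: there really is none of substance here. The argument uses only the additive group structure of $B^n$ and the linearity of $q$, and in particular it does not require knowing in advance that $M$ is a topological $B$-module (which is the subject of the remaining results of this subsection). The only point that must be invoked with care is \cref{lem:topology on finitely generated module} in the first step, which is precisely what lets us test openness of subsets of $M$ through $q^{-1}$ rather than through some fixed auxiliary surjection.
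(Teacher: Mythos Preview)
Your proof is correct and follows essentially the same approach as the paper: reduce to showing $q^{-1}(q(U))$ is open in $B^n$, then write this preimage as the union $\bigcup_{k\in\Ker q}(U+k)$ of translates of $U$. Your explicit invocation of \cref{lem:topology on finitely generated module} to identify the canonical topology with the quotient topology for $q$ is a helpful clarification that the paper leaves implicit.
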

\begin{proof}
Let $U\subseteq B^n$ be an open subset. It is enough to show that $q^{-1}(q(U))\subseteq B^n$ is open. Since 
\begin{align*}
     q^{-1}(q(U))=\bigcup_{x\in \Ker q}(x+U)
\end{align*} 
and $x+U$ is open in $B^n$, $q^{-1}(q(U)) $ is also open.
\end{proof}

\begin{proposition}\label{prop:product top and canonical top}
For any finitely generated $B$-modules $M$ and $N$, the canonical topology on $M\times N$ is the same as the product topology of the canonical topologies.
\end{proposition}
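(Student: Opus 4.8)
The plan is to present $M\oplus N$ as a quotient of $B^{m+n}$ compatibly with chosen presentations of $M$ and $N$, and then to invoke the fact, already established above, that such quotient maps are open.

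Concretely, fix $B$-linear surjections $q_M\colon B^m\twoheadrightarrow M$ and $q_N\colon B^n\twoheadrightarrow N$, and set $q:=q_M\oplus q_N\colon B^m\oplus B^n\twoheadrightarrow M\oplus N$, $(x,y)\mapsto(q_M(x),q_N(y))$. Since $B^m\oplus B^n$ with the product topology is nothing but $B^{m+n}$ with the product topology, \cref{lem:topology on finitely generated module} together with \cref{def:definition of the canonical topology} identifies the quotient topology on $M\oplus N$ induced by $q$ with the canonical topology. So it suffices to show that this quotient topology agrees with the product of the canonical topologies on $M$ and $N$.

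Equip the target with the product of the canonical topologies. Then $q$ is continuous, because its two components $q_M\circ\mathrm{pr}_1$ and $q_N\circ\mathrm{pr}_2$ are composites of the (continuous) coordinate projections with $q_M$ and $q_N$, which are continuous by construction of the canonical topology. Moreover $q$ is open: by the preceding lemma every $B$-linear surjection $B^n\twoheadrightarrow M$ is open, so $q_M$ and $q_N$ are open, and a product of open maps is open — a basic open $U\times V$ is sent to $q_M(U)\times q_N(V)$, which is open, and an arbitrary open set is a union of such. A continuous, surjective, open map is a topological quotient map; hence the product of the canonical topologies coincides with the quotient topology induced by $q$, which is the canonical topology on $M\oplus N$.

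I do not expect a genuine obstacle here: the only points requiring a moment's care are that a finite product of open maps is open and that continuity plus surjectivity plus openness forces a quotient map. One direction — that the canonical topology on $M\oplus N$ refines the product topology — could alternatively be obtained more cheaply from the continuity of the two ($B$-linear) projections $M\oplus N\to M$ and $M\oplus N\to N$; the substance is the reverse inclusion, which is exactly the openness argument above.
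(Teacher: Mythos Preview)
Your proof is correct and follows essentially the same approach as the paper: both use the presentation $q_M\oplus q_N\colon B^{m+n}\twoheadrightarrow M\oplus N$, invoke the preceding lemma to get openness of $q_M$ and $q_N$, and rely on the identity $(q_M\oplus q_N)(U\times V)=q_M(U)\times q_N(V)$. The only difference is packaging: the paper checks both inclusions of topologies by hand, while you invoke the general fact that a continuous surjective open map is a quotient map.
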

\begin{proof}
Take surjections $q_1\colon B^m \twoheadrightarrow M$ and $q_2\colon B^n \twoheadrightarrow N$. For any open subsets $U\subseteq M$ and $ V\subseteq N$, 
\begin{align*}
     (q_1 \times q_2)^{-1}(U\times V)=q_1^{-1}(U) \times q_2^{-1}(V)
\end{align*}
is open in $B^{n+m}$. This shows that the canonical topology is finer than the product topology.

On the other hand, take an open subset $W\subseteq M\times N$ with respect to the canonical topology on $M\times N$. Since the topology on $B^{m+n}$ is the same as the product topology of $B^m$ and $B^n$, we can put
\begin{align*}
     (q_1 \times q_2)^{-1}(W)=\bigcup_{i\in I}U_i\times V_i
\end{align*}
for some open subsets $U_i\subseteq B^m$ and $V_i\subseteq B^n$. Because $q_1\times q_2$ is surjective, we have
\begin{align*}
     W&=q_1\times q_2\bigl((q_1\times q_2)^{-1}(W)\bigr)\\
     &=q_1\times q_2\left(\bigcup_{i\in I}U_i\times V_i\right)\\
     &=\bigcup_{i\in I}q_1(U_i)\times q_2(V_i).
\end{align*}
Since $q_1$ and $q_2$ are open maps by \cref{lem:open map}, we conclude that $W$ is open with respect to the product topology on $M\times N$.
\end{proof}

\begin{theorem}\label{thm:topological module}
For any finitely generated $B$-module $M$, $M$ is a topological $B$-module with respect to the canonical topology.
\end{theorem}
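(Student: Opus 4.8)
The plan is to verify the two continuity axioms---continuity of addition $M \times M \to M$ and of scalar multiplication $B \times M \to M$---by reducing each to the already-established fact that a surjection $q \colon B^n \twoheadrightarrow M$ is an open map, combined with the corresponding statements for the free module $B^n$, which hold because $B$ is a topological ring (so that $B^n$ with the product topology is a topological $B$-module, as noted before \cref{def:definition of the canonical topology}).

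First, fix a surjection $q \colon B^n \twoheadrightarrow M$ and equip $M$ with the canonical topology. For addition, I would consider the commutative square whose top arrow is the (continuous) addition map $B^n \oplus B^n \to B^n$, whose bottom arrow is addition on $M$, whose left arrow is $q \oplus q$, and whose right arrow is $q$. Since $q$ is an open surjection and a finite product of open maps is open, $q \oplus q \colon B^n \oplus B^n \to M \oplus M$ is an open surjection, hence a topological quotient map; moreover, by the preceding proposition the canonical topology on $M \oplus M$ agrees with the product topology, so this is indeed the relevant domain. The composite $B^n \oplus B^n \to B^n \to M$ is continuous, so by the universal property of quotient maps the induced map $M \oplus M \to M$, namely addition, is continuous. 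Continuity of the negation map $M \to M$ is already covered by the proposition that $B$-linear maps are continuous for the canonical topologies.

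For scalar multiplication, I would argue in the same way. The map $\id_B \times q \colon B \times B^n \to B \times M$ is an open surjection---again because $q$ is open, $\id_B$ is open, and a product of open maps is open---hence a quotient map. Now consider the square whose top arrow is the scalar multiplication $B \times B^n \to B^n$, $(b, (x_i)_i) \mapsto (b x_i)_i$, which is continuous because multiplication in $B$ is continuous; whose bottom arrow is scalar multiplication on $M$; whose left arrow is $\id_B \times q$; and whose right arrow is $q$. The composite $B \times B^n \to B^n \to M$ is continuous, so the universal property of the quotient map $\id_B \times q$ forces scalar multiplication $B \times M \to M$ to be continuous.

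The one genuinely delicate point is that a product of quotient maps need not be a quotient map in general, so one cannot simply tensor the quotient map $q$ with identities and expect the result to remain a quotient map; the remedy is precisely to use that $q$ is \emph{open} (hence so are $q \oplus q$ and $\id_B \times q$), since open surjections are always quotient maps. Everything else is a formal diagram chase with the universal property of quotient maps, using only results already proved in this subsection.
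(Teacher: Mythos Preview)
Your proof is correct and follows essentially the same route as the paper. The only cosmetic difference is in the addition step: the paper observes directly that the addition map $M \times M \to M$ is $B$-linear and then invokes the proposition that $B$-linear maps between finitely generated $B$-modules are continuous for the canonical topologies (together with the fact that the canonical and product topologies on $M \oplus M$ agree), whereas you run the explicit quotient-map diagram with $q \oplus q$; for scalar multiplication both arguments are identical in substance, with the paper phrasing the quotient property of $\id_B \times q$ via the direct-sum proposition applied to $B \oplus M$ rather than via openness.
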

\begin{proof}
As we mentioned before, if $M$ is a finite free $B$-module, one can easily show that it is a topological $B$-module. In general, 
\begin{align*}
     M\times M  \rightarrow M,\qquad (m,n)\mapsto m+n
\end{align*}
is a $B$-linear map, so it is continuous with respect to the canonical topologies by \cref{prop:linear map is continuous}. Since the canonical topology on $M\times M$ is the same as the product topology by \cref{prop:product top and canonical top}, this shows the continuity of addition. Also, since
\begin{align*}
     M  \rightarrow M\colon m\mapsto -m
\end{align*}
is a $B$-linear map, it is continuous. It remains to show the continuity of scalar multiplication. Take
a surjection $q\colon B^n \twoheadrightarrow M$. We have the following commutative diagram.
\[\begin{tikzcd}
B\times B^n \arrow[r, "\text{scalar}"]\arrow[d, "\id\times q"',->>] & B^n\arrow[d, "q",->>]\\ 
B\times M\arrow[r, "\text{scalar}"]&M
\end{tikzcd}\]
As the product topology on $B\times M$ is the same as the canonical topology, an open subset of $B\times M$ is a subset which is open after pulling back to $B\times B^n$. As $B$ is a topological ring, the scalar multiplication on $B^n$ is continuous. Then, the continuity of scalar multiplication on $M$ follows from them.
\end{proof}

\begin{proposition}\label{prop:adic topology}
Let $B$ be a ring with the $I$-adic topology for an ideal $I\subseteq B$ and let $M$ be a finitely generated $B$-module. Then the canonical topology on $M$ coincides with the $I$-adic topology on $M$. 
\end{proposition}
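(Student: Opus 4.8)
The plan is to fix once and for all a $B$-linear surjection $q\colon B^n \twoheadrightarrow M$, which by \cref{lem:topology on finitely generated module} realizes the canonical topology on $M$ as the quotient topology from the product topology on $B^n$. The starting observation is that, since $B$ carries the $I$-adic topology, the product topology on $B^n$ coincides with the $I$-adic topology on the $B$-module $B^n$: the submodules $I^k B^n = (I^k)^{\oplus n}$ are cofinal among the basic open neighbourhoods $I^{k_1}\times\cdots\times I^{k_n}$ of $0$, so $\{I^k B^n\}_{k\geq 0}$ is a neighbourhood basis of $0$ in $B^n$. Thus both topologies to be compared on $M$ are generated by translates of submodules, and it suffices to compare those.

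The technical heart is the identity $q(I^k B^n) = I^k M$ for every $k\geq 0$. The inclusion $q(I^k B^n)\subseteq I^k M$ is immediate from $B$-linearity of $q$; conversely, a generator $am$ of $I^k M$ with $a\in I^k$ and $m\in M$ lifts, using surjectivity of $q$, to $ax\in I^k B^n$ with $q(ax)=am$, giving $I^k M\subseteq q(I^k B^n)$. Taking preimages, $q^{-1}(I^k M) = \Ker q + I^k B^n = \bigcup_{z\in \Ker q}(z+I^k B^n)$, which is open in $B^n$.

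From this the inclusion ``$I$-adic topology $\subseteq$ canonical topology'' follows: a basic open set of the $I$-adic topology on $M$ is a translate $m_0 + I^k M$, say with $m_0 = q(x_0)$; since $q$ is an additive homomorphism, $q^{-1}(m_0 + I^k M) = x_0 + q^{-1}(I^k M)$ is open in $B^n$ by the previous step, so $m_0 + I^k M$ is open in the quotient (canonical) topology. For the reverse inclusion, let $U\subseteq M$ be open in the canonical topology and $m\in U$; choose $x\in B^n$ with $q(x)=m$. Then $q^{-1}(U)$ is open in $B^n$ and contains $x$, hence contains $x + I^k B^n$ for some $k$, and applying $q$ together with $q(q^{-1}(U))\subseteq U$ yields $m + I^k M = q(x+I^k B^n)\subseteq U$. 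Therefore $U$ is open in the $I$-adic topology, and the two topologies agree.

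As for the main obstacle: there is essentially none beyond careful bookkeeping, and in particular no Noetherian hypothesis or Artin--Rees type input is needed. The only points requiring a moment of attention are the identification of the product topology on $B^n$ with the $I$-adic topology and the identity $q(I^k B^n)=I^k M$; everything else is a formal manipulation with images and preimages under the group homomorphism $q$.
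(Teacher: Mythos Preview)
Your proof is correct and follows essentially the same approach as the paper: fix a surjection $q\colon B^n\twoheadrightarrow M$, identify the product topology on $B^n$ with the $I$-adic topology, and compare the two topologies on $M$ via the identity $q(I^kB^n)=I^kM$. The paper is slightly terser (it only notes that $q^{-1}(I^rM)$ is a subgroup containing the open subgroup $I^rB^n$, without writing out $q^{-1}(I^kM)=\Ker q+I^kB^n$ explicitly), but the argument is the same.
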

\begin{proof}
It is easy to see that the assertion holds when $M$ is a finite free $B$-module. For a general finitely generated $B$-module $M$, take a $B$-linear surjective map $q\colon B^n \twoheadrightarrow M$. Let $U\subseteq M$ be an open neighborhood of $0$ with respect to the canonical topology. Then $q^{-1}(U)\subseteq B^n$ contains $I^rB^n$ for some $r\geq 0$, which means that $U$ contains $I^rM$. Together with \cref{thm:topological module}, this implies that the $I$-adic topology is finer than the canonical topology. It remains to show that for any $r\geq 0$, $q^{-1}(I^rM)\subseteq B^n$ is open with respect to the $I$-adic topology. This follows from the fact that $q^{-1}(I^rM)$ is a group that contains the open subgroup $I^rB^n$. 
\end{proof}

\subsection{Semi-linear representations}

\begin{definition}[cf. {\cite[Definition 1.1.]{MorrowTsuji2021}}]\label{def:semi-linear representations}
Let $G$ be a topological group and let $B$ be a topological ring equipped with a continuous action by $G$. A \emph{semi-linear representation of $G$ over $B$} is a finite projective $B$-module equipped with a semi-linear continuous action by $G$ with respect to the canonical topology.
\end{definition}

%\begin{proof}
%Let $(R,\mathfrak{m})$ be a local ring, $M$ be a finite projective $R$-module and $x_1,\dots, x_n\in M$ be lifts of a basis of $M\otimes_R R/\mathfrak{m}$ over $R/\mathfrak{m}$. Then $x_1,\dots ,x_n$ is a generator of $M$ by Nakayama's lemma. We have the following exact sequence
%\[\begin{tikzcd}[row sep=small]
%0\arrow[r]&K\arrow[r]&R^n\arrow[r]&M\arrow[r]&0 \\ 
%&&e_i\arrow[r, mapsto]\arrow[u, phantom, "\rotatebox{90}{$\in$}"]&x_i\arrow[u, phantom, "\rotatebox{90}{$\in$}"]&
%\end{tikzcd}\]
%where $e_1,\dots ,e_n\in R^n$ is the canonical basis. Since $M$ is finite projective, $K$ is a finitely generated $R$-module. As $M$ is flat, we have the exact sequence
%\begin{align*}
     %0 \rightarrow K\otimes_{R} R/\mathfrak{m}  \rightarrow (R/\mathfrak{m})^n  \rightarrow M\otimes_R R/\mathfrak{m}  \rightarrow 0.
%\end{align*}
%The homomorphism $(R/\mathfrak{m})^n  \rightarrow M\otimes_R R/\mathfrak{m}$ is an isomorphism by the definition of $x_1,\dots ,x_n$. So $K\otimes_R R/\mathfrak{m}=0$. We conclude $K=0$ by Nakayama's lemma.
%\end{proof}

\begin{proposition}\label{prop:equivalent condition of continuity}
Let $G$ be a topological group and let $B$ be a topological ring equipped with a continuous action by $G$. Let $M$ be a finitely generated $B$-module equipped with a semi-linear $G$-action. Let $e_1,\dots ,e_d$ be generators of $M$ over $B$. We equip $M$ with the canonical topology.
Then, the $G$-action on $M$ is continuous if and only if the maps
     \begin{align*}
         G \rightarrow M,\quad g\mapsto ge_i
     \end{align*}
are continuous for $i=1,\dots ,d$.
\end{proposition}
\begin{proof}
The ``only if'' part follows from the continuity of the inclusions $G\rightarrow G\times M,\ g\mapsto (g,e_i)$ for $i=1,\dots ,d$, so we prove the ``if'' part. By using \cref{lem:open map}, we see that the product topology on $G\times M$ is the same as the quotient topology induced by the surjection $G\times B^d \twoheadrightarrow G\times M,\ (g, (b_i)_i)\mapsto (g, b_1e_1+\dots+ b_de_d)$ (cf. the proof of \cref{prop:product top and canonical top}). Thus, it suffices to show that the map 
\begin{align}\label{eq:action}
     G\times B^d  \rightarrow M,\quad \bigl(g,(b_i)_i\bigr)\mapsto \sum_{i=1}^d g(b_i)g(e_i)
\end{align}
is continuous. Consider the following maps
\begin{alignat*}{2}
     G\times B^d  &\rightarrow G\times B^d,&\quad \bigl(g, (b_i)_i\bigr)&\mapsto\bigl( g, (gb_i)_i\bigr),\\
     G\times B^d  &\rightarrow M^d\times B^d,&\quad \bigl(g, (b_i)_i\bigr) &\mapsto \bigl((ge_i)_i, (b_i)_i\bigr), \text{ and}\\
     M^d\times B^d  &\rightarrow M^d  \rightarrow M,&\quad \bigl((m_i)_i, (b_i)_i\bigr)&\mapsto (b_im_i)_i\mapsto \sum_{i=1}^d b_im_i.
\end{alignat*}
Then, the composite is the same as the map \eqref{eq:action}. So it is enough to show that every map is continuous. Since the $G$-action on $B$ is continuous, the first map is continuous. The continuity of the second map follows from the assumption. The last map is continuous by \cref{thm:topological module}.
\end{proof}

Semi-linear representations are stable under scalar extensions:
\begin{proposition}[Scalar extension]\label{prop:scalar extension of semi-linear representations}
Let $A$ and $B$ be topological rings equipped with a continuous action by a topological group $G$. Let $f\colon A \rightarrow B$ be a $G$-equivariant continuous ring homomorphism. Then for any finitely generated $A$-module $M$ equipped with a semi-linear and continuous $G$-action with respect to the canonical topology, the $G$-action on $M\otimes_A B$ defined by
\begin{align}\label{eq:scalar extension of semi-linear representations}
     G\times (M\otimes_A B)  \rightarrow M \otimes_A B,\qquad (g, m\otimes b)\mapsto gm \otimes gb.
\end{align}
is semi-linear and continuous with respect to the canonical topology.
\end{proposition}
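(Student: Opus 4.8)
The plan is to check four things in turn --- that \eqref{eq:scalar extension of semi-linear representations} is well defined and defines a left action by additive bijections, that this action is $B$-semi-linear, that $M\otimes_A B$ is finite projective over $B$, and that the action is continuous for the canonical topology --- of which only the last is a genuine point. For well-definedness, fix $g\in G$ and consider the biadditive map $M\times B\to M\otimes_A B$, $(m,b)\mapsto gm\otimes gb$; semi-linearity of the $G$-action on the $A$-module $M$ gives $g(ma)=(gm)(ga)$, while $G$-equivariance of $f$ together with $g\cdot 1 = 1$ gives $g(f(a)b)=f(ga)(gb)$, so $g(ma)\otimes gb = gm\otimes f(ga)(gb) = gm\otimes g(f(a)b)$; the map is therefore $A$-balanced and descends to an additive endomorphism $\rho(g)$ of $M\otimes_A B$ with $\rho(g)(m\otimes b)=gm\otimes gb$. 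Since $g\mapsto g$ is an action on both $M$ and $B$ fixing the respective units, $\rho(gh)=\rho(g)\rho(h)$ and $\rho(1)=\id$, so each $\rho(g)$ is bijective; $B$-semi-linearity is the identity $\rho(g)\bigl(b'(m\otimes b)\bigr)=gm\otimes g(b'b)=(gb')\,\rho(g)(m\otimes b)$; and writing $M$ as a direct summand of some $A^n$ and applying $-\otimes_A B$ exhibits $M\otimes_A B$ as a direct summand of $B^n$, hence finite projective over $B$.

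For continuity I would use that $M$, being finite projective over $A$, admits a dual basis: elements $m_1,\dots,m_n\in M$ and $A$-linear maps $\phi_1,\dots,\phi_n\colon M\to A$ with $m=\sum_i\phi_i(m)m_i$ for all $m$. Then the $m_i\otimes 1$ generate $M\otimes_A B$ over $B$, so $q\colon B^n\twoheadrightarrow M\otimes_A B$, $(b_i)_i\mapsto\sum_i b_i(m_i\otimes 1)$, realizes the canonical topology on $M\otimes_A B$ as the quotient topology (\cref{lem:topology on finitely generated module}) and is open by the earlier lemma on surjections $B^n\twoheadrightarrow M$. Set $a_{ji}(g):=f\bigl(\phi_j(gm_i)\bigr)\in B$; since $g\mapsto gm_i$ is continuous ($M$ being a semi-linear representation), $\phi_j$ is continuous (an $A$-linear map between finitely generated modules), and $f$ is continuous, the map $g\mapsto a_{ji}(g)$ is continuous. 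From $g(m_i\otimes 1)=gm_i\otimes 1=\sum_j a_{ji}(g)(m_j\otimes 1)$ one computes $g\cdot q\bigl((b_i)_i\bigr)=q\Bigl(\bigl(\sum_i a_{ji}(g)(gb_i)\bigr)_j\Bigr)$, so the action fits into a commutative square with top arrow $G\times B^n\to B^n$, $(g,(b_i)_i)\mapsto\bigl(\sum_i a_{ji}(g)(gb_i)\bigr)_j$, left arrow $\id_G\times q$, and right arrow $q$. The top arrow is continuous because the $a_{ji}$ are continuous and addition, multiplication and the $G$-action on $B$ are continuous, while $\id_G\times q$ is an open continuous surjection (the product of $\id_G$ with the open surjection $q$), hence a quotient map; it follows that the bottom arrow $G\times(M\otimes_A B)\to M\otimes_A B$ is continuous.

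The main obstacle is precisely this continuity, and within it the continuity of the coefficients $a_{ji}$: for an arbitrary generating set of $M$ the coefficients expressing $gm_i$ need not vary continuously with $g$, so one is forced to make a canonical choice, which is exactly what the dual basis of a finite projective module provides. Once that choice is in place, the openness lemma upgrades $\id_G\times q$ to a quotient map and reduces everything to continuity of the ring operations on $B^n$; the remaining checks are routine bookkeeping with the canonical topology.
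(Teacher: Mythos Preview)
Your proof is correct, and it takes a genuinely different route from the paper's. Where the paper fixes a point $(g,x)$ and an open neighborhood $U\ni 0$ in $M\otimes_A B$, then carries out a direct $\varepsilon$--$\delta$ argument by expanding $hg\bigl(\sum m_i\otimes(b_i+v_i)\bigr)-g\bigl(\sum m_i\otimes b_i\bigr)$ into five terms and controlling each one separately using only a generating set $m_1,\dots,m_r$ of $M$, you instead exploit projectivity to choose a dual basis and obtain \emph{continuous} coefficient functions $a_{ji}(g)=f(\phi_j(gm_i))$; this lets you lift the action to a continuous map $G\times B^n\to B^n$ and descend through the open surjection $\id_G\times q$. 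Your observation that with an arbitrary generating set the coefficients of $gm_i$ cannot be chosen continuously in $g$ is exactly what distinguishes the two approaches: the paper sidesteps this by never writing $gm_i$ in coordinates, while you resolve it via the dual basis. The payoff of your approach is a cleaner, more structural argument; the paper's argument, on the other hand, uses only that $M$ is finitely generated (projectivity enters only to know $M\otimes_A B$ is finite projective), so its continuity proof would apply verbatim to any finitely generated $A$-module with a continuous semi-linear $G$-action.
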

\begin{proof}
Semi-linearity is obvious by definition. Let us check the continuity. Let $e_1,\dots ,e_d$ be generators of $M$ over $A$. Then, $e_1 \otimes 1,\dots ,e_d \otimes 1$ are generators of $M\otimes_A B$ over $B$. By \cref{prop:equivalent condition of continuity}, it is enough to show that the natural map $\iota\colon M  \rightarrow M\otimes_A B$ is continuous. Take an $A$-linear surjective homomorphism $q\colon A^d \twoheadrightarrow M$. We have a commutative diagram
\[\begin{tikzcd}
 A^d \arrow[r, ->>,"q"]\arrow[d,"f^d"']&M\arrow[d,"\iota"] \\ 
 A^d \otimes_A B\arrow[r, ->>,"q\otimes\id_B"'] &M\otimes_A B.
\end{tikzcd}\]
Since the topology on $M$ is the quotient topology induced by $q$, it suffices to prove that for any open subset $U$ of $M\otimes_A B$, $(\iota\circ q)^{-1}(U)$ is open in $A^d$. It follows from the commutativity of the diagram and the continuity of $q \otimes \id_B$ and $f$.
\end{proof}

\begin{lemma}\label{lem:continuity of subgroup}
     Let $G$ be a topological group and let $B$ be a topological ring equipped with a continuous action by $G$.
     Assume that the topology on $B$ is the $I$-adic topology for some $G$-stable ideal $I$. Let $M$ be a $B$-module equipped with a semi-linear $G$-action. Let $H$ be an open subgroup of $G$. Then, if the $H$-action on $M$ is continuous with respect to the $I$-adic topology, the $G$-action is also continuous.
     \end{lemma}
     \begin{proof}
          Fix an arbitrary $(g,x)\in G\times M$. We deduce the continuity of the $G$-action at $(g,x)$ from the continuity of the $H$-action. Take any $n\in \mathbb{Z}_{\geq 0}$.  
          Since 
          \begin{align*}
               H \hookrightarrow  H\times M  \xrightarrow{\text{$H$-action}} M,\qquad h\mapsto (h, gx)\mapsto hgx
          \end{align*}
          is continuous, there exists an open neighborhood $1\in H_1\subseteq H$ such that for every $h\in H_1$, $hgx\in gx+I^nM$. Then for any $hg\in H_1g$ and $x+y\in x+ I^nM$, we have $hg(x+y)=hgx+hgy$. We see that $hgx\in gx+I^nM$ by the definition of $H_1$, and $hgy\in I^nM$ as $I^nM$ is stable under the $G$-action. Therefore, $hg(x+y)\in gx+I^nM$ as desired.
\end{proof}

The following definition is due to M. Morrow and T. Tsuji.
\begin{definition}[{\cite[Definition 1.1.]{MorrowTsuji2021}}]\label{def:trivial mod I}
Let $G$ be a group and let $B$ be a ring equipped with a $G$-action. Let $I\subseteq B$ be an ideal of $B$ stable under the action of $G$ and let $M$ be a $B$-module equipped with a semi-linear $G$-action. We say the $G$-action on $M$ is \emph{trivial modulo $I$} if the $(B/I)^G$-module $(M/I)^G$ is finite projective and the canonical map $(M/I)^G \otimes_{(B/I)^G}B/I  \rightarrow M/I$ is an isomorphism.
\end{definition}

\begin{proposition}\label{prop:Galois fix part}
Let $A$ be a ring, $M$ a finite projective $A$-module and $G$ a group. We endow $M$ (resp. $A$) with the trivial $G$-action, i.e., $gm=m$ (resp. $ga=a$) for every $g\in G$ and $m\in M$ (resp. $a\in A$). Let $B$ be an $A$-algebra equipped with a $G$-action which is compatible with the algebra structure $A \rightarrow B$. Then, we have an isomorphism
\begin{align*}
     M \otimes_{A} B^G  \xrightarrow{\cong } (M\otimes_{A}B)^G. 
\end{align*}
\end{proposition}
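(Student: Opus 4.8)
The plan is to reduce to the case where $M$ is finite free, and then to use the fact that taking $G$-invariants commutes with finite direct sums. First I would observe that since $M$ is finite projective over $A$, there is another finite projective $A$-module $M'$ such that $M \oplus M' \cong A^n$ for some $n \geq 0$. Applying the functor $(- \otimes_A B)^G$ is compatible with direct sums, and applying the functor $- \otimes_A B^G$ is also compatible with direct sums; moreover the natural transformation $M \otimes_A B^G \to (M \otimes_A B)^G$ is additive in $M$. Hence the statement for $M$ and $M'$ together is equivalent to the statement for $M \oplus M' \cong A^n$, so it suffices to treat the free case $M = A^n$, and in fact, by additivity again, the case $M = A$.

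For $M = A$, the map in question is the identity $B^G \xrightarrow{\cong} B^G$ (using the trivial $G$-action on $A$, so that $A \otimes_A B = B$ $G$-equivariantly), which is obviously an isomorphism. More carefully, I would spell out that the natural map $M \otimes_A B^G \to (M \otimes_A B)^G$ sends $m \otimes b \mapsto m \otimes b$, well-defined because $g(m \otimes b) = gm \otimes gb = m \otimes gb = m \otimes b$ when $b \in B^G$ and the $G$-action on $M$ is trivial; this is manifestly functorial and additive in $M$, which justifies the dévissage above. Then chasing through the isomorphism $M \oplus M' \cong A^n$ and the commuting squares gives the result for $M$.

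I expect the main (minor) obstacle to be purely bookkeeping: one must check that the canonical comparison map is genuinely additive in $M$ and compatible with the splitting $M \oplus M' \cong A^n$, i.e., that the relevant squares
\[\begin{tikzcd}
(M \oplus M') \otimes_A B^G \arrow[r]\arrow[d,"\cong"'] & \bigl((M \oplus M') \otimes_A B\bigr)^G \arrow[d,"\cong"] \\
(M \otimes_A B^G) \oplus (M' \otimes_A B^G) \arrow[r] & (M \otimes_A B)^G \oplus (M' \otimes_A B)^G
\end{tikzcd}\]
commute, which is immediate from the definitions. No flatness or finiteness hypothesis on $B$ is needed, and the topology plays no role here; the only input is that $(-)^G$ is additive and that $M$ is a direct summand of a free module. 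Note that one does not even need $M$ to be finitely generated for the argument, only that it is a direct summand of a (possibly infinite) free module, but finiteness is the setting of interest.
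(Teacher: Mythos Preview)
Your proof is correct and follows essentially the same approach as the paper: reduce to the free case by choosing a complement $M'$ with $M\oplus M'\cong A^n$, use that both functors and the comparison map are additive, and observe that the free case is immediate. The paper's argument is slightly terser but structurally identical.
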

\begin{proof}
This proposition is obvious when $M$ is a free $A$-module. For a general finite projective $A$-module $M$, let $N$ be a finite projective $A$-module such that $M\oplus N$ is a finite free $A$-module. We equip $N$ with the trivial $G$-action. We have the following commutative diagram. 
\[\begin{tikzcd}
(M\oplus N)\otimes_{A} B^G \arrow[r, "\cong "] \arrow[d, "\cong ", phantom,sloped]&\Bigl((M\oplus N)\otimes_{A}B\Bigr)^G \arrow[d, "\cong ", phantom,sloped]  \\ 
\bigl(M\otimes_{A} B^G\bigr) \oplus \bigl(N\otimes_{A}B^G\bigr) \arrow[r]&(M\otimes_{A}B)^G \oplus (N\otimes_{A}B)^G
\end{tikzcd}\]
Since $M\oplus N$ is free, the top arrow is an isomorphism. Thus the bottom arrow is also an isomorphism, whose left component gives the desired isomorphism.
\end{proof}

\begin{proposition}\label{prop:trivial mod I implies mod J}
Let $G$ be a group and let $B$ be a ring equipped with a $G$-action. Let $I\subseteq J\subseteq B$ be ideals of $B$ stable under the action of $G$. Let $M$ be a $B$-module equipped with a semi-linear $G$-action. If the $G$-action on $M$ is trivial modulo $I$, then it is trivial modulo $J$.
\end{proposition}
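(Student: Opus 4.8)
The plan is to reduce everything to the situation modulo $I$ by means of the base-change identity $M/J\cong (M/I)\otimes_{B/I}(B/J)$, and then to feed this into \cref{prop:Galois fix part}.

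First I would record the purely algebraic fact that, writing $M/I$ for $M/IM$ and $M/J$ for $M/JM$ as in \cref{def:trivial mod I}, there is a canonical $G$-equivariant isomorphism $M/J\cong (M/I)\otimes_{B/I}(B/J)$: since $I\subseteq J$, the module $(M/I)\otimes_{B/I}(B/J)=(M/I)\otimes_{B/I}(B/I)/(J/I)$ is the quotient of $M/IM$ by $(J/I)(M/IM)=JM/IM$, which is $M/JM$, and this identification is visibly compatible with the diagonal $G$-action. Now by hypothesis the $G$-action on $M$ is trivial modulo $I$, so the canonical map is a $G$-equivariant isomorphism
\[
(M/I)^G\otimes_{(B/I)^G}(B/I)\xrightarrow{\ \cong\ } M/I,
\]
where $G$ acts on the source only through $B/I$, the factor $(M/I)^G$ carrying the trivial action. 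Tensoring with $B/J$ over $B/I$ and combining with the previous identification yields a $G$-equivariant isomorphism
\[
(M/I)^G\otimes_{(B/I)^G}(B/J)\xrightarrow{\ \cong\ } M/J,
\]
with $G$ acting on the left only through the quotient action on $B/J$.

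Next I would apply \cref{prop:Galois fix part} with the ring $(B/I)^G$, the finite projective module $(M/I)^G$ (finite projective by the triviality hypothesis), and the $(B/I)^G$-algebra $B/J$, whose structure map is the composite $(B/I)^G\hookrightarrow B/I\twoheadrightarrow B/J$; this composite is $G$-equivariant because its source has the trivial action and elements of $(B/I)^G$ have $G$-fixed images in $B/J$. The proposition then gives a canonical isomorphism $(M/I)^G\otimes_{(B/I)^G}(B/J)^G\xrightarrow{\cong}\bigl((M/I)^G\otimes_{(B/I)^G}(B/J)\bigr)^G$, which, via the displayed $G$-equivariant isomorphism above, identifies $(M/J)^G$ with $(M/I)^G\otimes_{(B/I)^G}(B/J)^G$. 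Since the reduction map sends $(B/I)^G$ into $(B/J)^G$, this last module is the base change of the finite projective $(B/I)^G$-module $(M/I)^G$ along the ring homomorphism $(B/I)^G\to (B/J)^G$; hence $(M/J)^G$ is finite projective over $(B/J)^G$, which is the first assertion of \cref{def:trivial mod I} for $J$.

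It remains to check that the canonical map $(M/J)^G\otimes_{(B/J)^G}(B/J)\to M/J$ is an isomorphism. Under the identifications above it becomes the chain
\[
(M/I)^G\otimes_{(B/I)^G}(B/J)^G\otimes_{(B/J)^G}(B/J)=(M/I)^G\otimes_{(B/I)^G}(B/J)\xrightarrow{\ \cong\ } M/J,
\]
the last arrow being the isomorphism of the second display. The only genuinely delicate point of the argument is the bookkeeping needed to verify that this composite of canonical base-change and fixed-point maps really coincides with the canonical map of \cref{def:trivial mod I}; this is a formal diagram chase using naturality of the maps $P\otimes_R R' \to (P\otimes_R(\,\cdot\,))$ and of the inclusions of $G$-fixed parts, and once it is done the proof is complete. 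Everything else is a direct invocation of \cref{prop:Galois fix part} together with the stability of finite projectivity under base change.
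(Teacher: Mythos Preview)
Your proof is correct and follows essentially the same approach as the paper: tensor the triviality isomorphism modulo $I$ with $B/J$ over $B/I$, then apply \cref{prop:Galois fix part} to identify $(M/J)^G$ with $(M/I)^G\otimes_{(B/I)^G}(B/J)^G$. The paper's version is terser and leaves the final compatibility check implicit, but the argument is the same.
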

\begin{proof}
By tensoring the isomorphism $(M/I)^G\otimes_{(B/I)^G} B/I  \xrightarrow{\cong } M/I$ with $B/J$ and by \cref{prop:Galois fix part}, we have isomorphisms
\begin{align*}
     (M/I)^G\otimes_{(B/I)^G} B/J  &\xrightarrow{\cong } M/J,\\
     (M/I)^G\otimes_{(B/I)^G} (B/J)^G  &\xrightarrow{\cong } (M/J)^G.
\end{align*}
Thus $(M/J)^G$ is a finite projective $(B/J)^G$-module and the canonical map
\begin{align*}
     (M/J)^G\otimes_{(B/J)^G}B/J  \rightarrow M/J
\end{align*}
is an isomorphism.
\end{proof}

\begin{corollary}\label{cor:mod mu implies mod xi}
Let $N$ be an $\AKp$-module equipped with a semi-linear $\Gamma_K$-action. If the $\Gamma_K$-action is trivial modulo $\mu$, then it is trivial modulo $\xi$ and trivial modulo $W(\mathfrak{m}_{\Kinf}^\flat)$.
\end{corollary}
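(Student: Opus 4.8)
The plan is to reduce everything to \cref{prop:trivial mod I implies mod J}. Concretely, it suffices to verify that $\mu\AKp$, $\xi\AKp$ and $W(\mathfrak{m}_{\Kinf}^\flat)$ are all $\Gamma_K$-stable ideals of $\AKp$ and that $\mu\AKp$ is contained in each of $\xi\AKp$ and $W(\mathfrak{m}_{\Kinf}^\flat)$; then applying the proposition with $I=\mu\AKp$ and $J=\xi\AKp$ (resp. $J=W(\mathfrak{m}_{\Kinf}^\flat)$) gives the claim, since ``trivial modulo $\mu$'' means trivial modulo the ideal $\mu\AKp$.

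First I would deal with the two inclusions. The inclusion $\mu\AKp\subseteq\xi\AKp$ is immediate from the definition $\xi=\mu/\varphi^{-1}(\mu)$, which gives $\mu=\xi\cdot\varphi^{-1}(\mu)$. For $\mu\AKp\subseteq W(\mathfrak{m}_{\Kinf}^\flat)$, recall that $W(\mathfrak{m}_{\Kinf}^\flat)=\Ker\bigl(\AKp\twoheadrightarrow W(k_\infty)\bigr)$; since $\zeta_{p^n}\equiv 1\bmod\mathfrak{m}_{\Kinf}$ for every $n$, the element $\epsilon$ maps to $1$ under $\mathcal{O}_{\Kinf}^\flat\to k_\infty$, so $\mu=[\epsilon]-1$ maps to $[1]-1=0$ in $W(k_\infty)$, i.e. $\mu\in W(\mathfrak{m}_{\Kinf}^\flat)$.

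It remains to record the $\Gamma_K$-stability. The ideal $W(\mathfrak{m}_{\Kinf}^\flat)$ is $\Gamma_K$-stable because $\Gamma_K$ acts on $\mathcal{O}_{\Kinf}^\flat$ by isometries, hence preserves $\mathfrak{m}_{\Kinf}^\flat$, so the surjection $\AKp\twoheadrightarrow W(k_\infty)$ is $\Gamma_K$-equivariant. For the other two, I would use the standard fact that $g\mu/\mu\in\AKp^\times$ for every $g\in\Gamma_K$ (coming from $g\mu=(1+\mu)^{\chi(g)}-1$ with $\chi$ the cyclotomic character): this immediately shows $\mu\AKp$ is $\Gamma_K$-stable, and since $g\xi/\xi=(g\mu/\mu)\cdot\varphi^{-1}(g\mu/\mu)^{-1}$ is again a unit, $\xi\AKp$ is $\Gamma_K$-stable as well. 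There is no genuine obstacle in this corollary: it is purely formal once these containments and stability statements are in place, and all of them are routine from the definitions of $\mu$, $\xi$ and the tilt.
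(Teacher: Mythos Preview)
Your proof is correct and follows exactly the same route as the paper: apply \cref{prop:trivial mod I implies mod J} using $\mu=\xi\,\varphi^{-1}(\mu)$ and $\mu\in W(\mathfrak{m}_{\Kinf}^\flat)$. You are simply more explicit than the paper in verifying the $\Gamma_K$-stability of the three ideals, which the paper leaves implicit but which is indeed a hypothesis of the proposition being invoked.
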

\begin{proof}
This follows from \cref{prop:trivial mod I implies mod J} and the facts that $\mu=\xi \varphi^{-1}(\mu)$ and $\mu\in W(\mathfrak{m}_{\Kinf}^\flat)$.
\end{proof}

\begin{proposition}\label{prop:triviality of subgroup}
Let $G$ be a group and let $B$ be a ring equipped with a $G$-action. 
Let $H\subseteq G$ be a subgroup of $G$ and $I\subseteq B$ be an ideal of $B$ stable under the $G$-action. Let $M$ be a $B$-module equipped with a semi-linear $G$-action. If the $G$-action on $M$ is trivial modulo $I$, then the $H$-action on $M$ is also trivial modulo $I$.
\end{proposition}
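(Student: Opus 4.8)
The plan is to imitate the proof of \cref{prop:trivial mod I implies mod J} almost verbatim, the only change being that in place of enlarging the ideal we restrict the group from $G$ to $H$. Write $\bar B := B/I$ and $\bar M := M/I$. By hypothesis $\bar M^G$ is a finite projective $\bar B^G$-module and the canonical map $\bar M^G \otimes_{\bar B^G} \bar B \to \bar M$ is an isomorphism. A preliminary observation I would record is that this map is $G$-equivariant: giving the source the $G$-action $g(m\otimes b) = m \otimes gb$ (legitimate, since $G$ acts trivially on $\bar M^G$ and on $\bar B^G$), the map sends $g(m\otimes b)$ to $m\cdot(gb) = (gm)(gb) = g(mb)$.

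Next I would compute $\bar M^H$. Since $H \subseteq G$, the group $H$ acts trivially on $\bar M^G$ and $\bar B^G$, and the inclusion $\bar B^G \hookrightarrow \bar B$ is $H$-equivariant. As $\bar M^G$ is finite projective over $\bar B^G$, \cref{prop:Galois fix part} — applied with $(A, M, G, B) = (\bar B^G, \bar M^G, H, \bar B)$ — yields an isomorphism $\bar M^G \otimes_{\bar B^G} \bar B^H \xrightarrow{\cong} (\bar M^G \otimes_{\bar B^G} \bar B)^H$. Taking $H$-invariants in the $H$-equivariant isomorphism $\bar M^G \otimes_{\bar B^G} \bar B \xrightarrow{\cong} \bar M$ identifies the target with $\bar M^H$, so altogether
\[
     \bar M^G \otimes_{\bar B^G} \bar B^H \xrightarrow{\cong} \bar M^H .
\]
The left side is the base change of a finite projective $\bar B^G$-module along $\bar B^G \to \bar B^H$, hence finite projective over $\bar B^H$; this gives the first half of the conclusion, that $(M/I)^H$ is finite projective over $(B/I)^H$.

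For the second half, I would tensor the displayed isomorphism with $\bar B$ over $\bar B^H$ to get
\[
     \bar M^H \otimes_{\bar B^H} \bar B \xrightarrow{\cong} \bar M^G \otimes_{\bar B^G} \bar B^H \otimes_{\bar B^H} \bar B = \bar M^G \otimes_{\bar B^G} \bar B \xrightarrow{\cong} \bar M ,
\]
and then check that this composite is nothing but the canonical map $m \otimes b \mapsto mb$, which establishes that the $H$-action on $M$ is trivial modulo $I$.

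I do not expect a genuine obstacle here; the proposition is essentially a formal consequence of \cref{prop:Galois fix part}. The only points that need a moment's care are the bookkeeping of equivariance — making sure the comparison isomorphism over $\bar B^G$ is $G$-equivariant so that one is entitled to take $H$-invariants of it — and verifying that the chain of isomorphisms in the final display really is the canonical comparison map and not merely some abstract isomorphism of $\bar B$-modules.
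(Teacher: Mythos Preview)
Your proof is correct and essentially identical to the paper's: both take $H$-invariants of the isomorphism $(M/I)^G\otimes_{(B/I)^G}B/I\xrightarrow{\cong}M/I$ via \cref{prop:Galois fix part} to obtain $(M/I)^G\otimes_{(B/I)^G}(B/I)^H\xrightarrow{\cong}(M/I)^H$, and then base-change back to $B/I$. Your added remarks on equivariance and on the final composite being the canonical map are valid but the paper leaves them implicit.
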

\begin{proof}
By taking the $H$-fixed part of the isomorphism $(M/I)^G \otimes_{(B/I)^G} B/I  \xrightarrow{\cong } M/I $, we obtain an isomorphism
\begin{align*}
     (M/I)^G \otimes_{(B/I)^G} (B/I)^H  \xrightarrow{\cong } (M/I)^H 
\end{align*}
by \cref{prop:Galois fix part}. Thus $(M/I)^H$ is a finite projective $(B/I)^H$-module and the canonical map 
\begin{align*}
     (M/I)^H \otimes_{(B/I)^H} B/I  \rightarrow M/I
\end{align*}
is an isomorphism.
\end{proof}

\subsection{$(\varphi,\Gamma)$-modules}

\begin{definition}\label{def:(phi G)-modules}
Let $B$ be a ring equipped with an endomorphism $\varphi_B\colon B \rightarrow B$. We fix a non-zero divisor $\widetilde{\xi}_B\in B$.
\begin{enumerate}
     \item A \emph{$\varphi$-module over $(B,\varphi_B, \widetilde{\xi}_B)$} is a finite projective $B$-module $M$ equipped with a $B[1/\widetilde{\xi}_B]$-linear homomorphism
     \begin{align*}
         \varPhi_M\colon  M\otimes_{B,\varphi_B} B[1/\widetilde{\xi}_B]  \rightarrow M[1/\widetilde{\xi}_B].
     \end{align*}
     Following J.-M. Fontaine's notation (\cite{Fontaine1990}), $\varphi_M\colon M \rightarrow M[1/\widetilde{\xi_B}]$ denotes the semi-linear map induced by $\varPhi_M$. We call $\varphi_M$ a \emph{Frobenius map on $M$}.
     A \emph{morphism of $\varphi$-modules} $f\colon (M_1, \varPhi_{M_1}) \rightarrow (M_2, \varPhi_{M_2})$ is a $B$-linear map $f\colon M_1 \rightarrow M_2$ which makes the following diagram commutative.
     \[\begin{tikzcd}
     M_1\otimes_{B,\varphi_B}B[1/\widetilde{\xi_B}]\arrow[r, "\varPhi_{M_1}"]\arrow[d, "f\otimes \id"']&M_1[1/\widetilde{\xi_B}]\arrow[d, "\text{$f[1/\widetilde{\xi_B}]$}"]\\ 
     M_2\otimes_{B,\varphi_B}B[1/\widetilde{\xi_B}]\arrow[r, "\varPhi_{M_2}"']&M_2[1/\widetilde{\xi_B}]
     \end{tikzcd}\]  
     %The category of $\varphi$-modules over $(B,\varphi_B,\widetilde{\xi}_B)$ is denoted by $\Mod_{\varphi}(B)$.

     \item A $\varphi$-module $ (M,\varPhi_M)$ is said to be \emph{of finite height} if $\varPhi_M$ is an isomorphism. 
     %The category of finite height $\varphi$-modules over $(B,\varphi_B,\widetilde{\xi}_B)$ is denoted by $\Mod_{\varphi}^{\fh}(B)$.
     
     \item We further assume that $B$ is a topological ring equipped with a continuous action by a topological group $G$ which commutes with $\varphi_B\colon B  \rightarrow B$. A \emph{$(\varphi,G)$-module $(M,\varPhi_M, G)$ over $(B,\varphi_B,\widetilde{\xi}_B,G)$} is a $\varphi$-module of finite height over $(B,\varphi_B,\widetilde{\xi}_B)$ equipped with a semi-linear continuous $G$-action with respect to the canonical topology which commutes with $\varphi_M$.
     A \emph{morphism of $(\varphi,G)$-modules} $(M_1, \varPhi_{M_1}, G)  \rightarrow (M_2,\varPhi_{M_2},G)$ is a $G$-equivariant morphism of $\varphi$-modules. 
     %The category of $(\varphi,G)$-modules over $(B,\varphi_B,\widetilde{\xi}_B)$ is denoted by $\Mod_{\varphi,G}^{\fh}(B)$.
\end{enumerate}
\end{definition}

\begin{remark}\label{rem:(phi G)-modules}
If there is no risk of confusion, we just say a $\varphi$-module $M$ over $B$ (resp. $(\varphi,G)$-module $M$ over $B$) and abbreviate $\varPhi_M$ (resp. $\varphi_M$) to $\varPhi$ (resp. $\varphi$).
\end{remark}

% \begin{example}\label{ex:(phi Gamma)-modules over AK}
% We endow $\AK:=W(\Kinf^\flat)$ with the weak topology, i.e., the product topology on $\AK \simeq \bigl({\Kinf}^\flat\bigr)^{\mathbb{N}}$. Let $\varphi\colon \AK  \rightarrow \AK$ be the Frobenius map of Witt vectors. Then $(\varphi,\Gamma_K)$-modules over $(\AK, \varphi, \varphi(\xi))$ are the same as the usual (free) \'{e}tale $(\varphi,\Gamma)$-modules over $\AK$ defined by Fontaine (\cite{Fontaine1984,Fontaine1990}). Note that $\varphi(\xi)$ is invertible in $\AK$.
% \end{example}

% \begin{example}
%      We endow $\AKp:=W(\mathcal{O}_{\Kinf}^\flat)$ (resp. $A_\inf:=W(\mathcal{O}_{\mathbb{C}_p}^\flat)$) with the weak topology. Let $\varphi\colon \AKp  \rightarrow \AKp$ (resp. $\varphi\colon A_\inf  \rightarrow A_\inf$) be the Frobenius map of Witt vectors. Then a finite height $\varphi$-module over $(\AKp, \varphi, \varphi(\xi))$ (resp. $(A_\inf, \varphi,\varphi(\xi))$) is a Breuil-Kisin-Fargues module in the sense of \cite{BhattMorrowScholze}. Note that $\varphi$-modules of finite height in our sense are less general than Breuil-Kisin-Fargues modules since we assume that the underlying modules are finite projective.
% \end{example}

\begin{proposition}[Scalar extension]\label{prop:scalar extension of phi-modules}
     Let $A$ and $B$ be rings equipped with endomorphisms $\varphi_A\colon A  \rightarrow A$ and $\varphi_B\colon B \rightarrow B$. Let $f\colon A  \rightarrow B$ be a ring homomorphism such that $f\circ \varphi_{A}=\varphi_B\circ f$. We fix non-zero divisors $\widetilde{\xi}_A\in A$ and $\widetilde{\xi}_B\in B$ such that $\widetilde{\xi}_B\in f(\widetilde{\xi}_A)B$. Then we have following:
     \begin{enumerate}
          \item Let $M$ be a $\varphi$-module over $(A,\varphi_A, \widetilde{\xi}_A)$. Then $M\otimes_A B$ is a $\varphi$-module over $(B,\varphi_B,\widetilde{\xi}_B)$ via
          \begin{align*}
               (M\otimes_A B)\otimes_{B,\varphi_B} B[1/\widetilde{\xi}_B] \cong (M\otimes_{A,\varphi_A}A[1/\widetilde{\xi}_{A}])\otimes_{A[1/\widetilde{\xi}_A]} B[1/\widetilde{\xi}_B] \xrightarrow{\varPhi_M \otimes\id_{B[1/\widetilde{\xi}_B]}} M\otimes_{A}B[1/\widetilde{\xi}_B].
          \end{align*}
          \item Let $M$ be a $\varphi$-module of finite height over $A$. Then the $\varphi$-module $M\otimes_A B$ is of finite height.
          \item We further assume that $A$ (resp. $B$) is a topological ring equipped with a continuous action by a topological group $G$ which commutes with $\varphi_A$ (resp. $\varphi_B$) and that $f\colon A  \rightarrow B$ is $G$-equivariant and continuous. Let $M$ be a $(\varphi,G)$-module over $A$. Then $M \otimes_A B$ is a $(\varphi,G)$-module over $B$ via the $G$-action defined by
          \begin{align*}
               G\times (M\otimes_A B)  \rightarrow M \otimes_A B,\qquad (g, m\otimes b)\mapsto gm \otimes gb.
          \end{align*}
     \end{enumerate}
\end{proposition}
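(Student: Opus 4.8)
The plan is to verify the three assertions of Proposition 1.36 in order, reducing each to a routine check once the relevant tensor-hom identities are set up. For (i), I would first define the map $\varPhi_{M\otimes_A B}$ by the displayed composite, so the only content is that it is well-defined and $B$-linear (after inverting $\widetilde{\xi}_B$); this is immediate since $\varPhi_M$ is $A[1/\widetilde{\xi}_A]$-linear, $f(\widetilde{\xi}_A)=\widetilde{\xi}_B$ guarantees the base change $-\otimes_{A[1/\widetilde{\xi}_A]} B[1/\widetilde{\xi}_B]$ makes sense, and the canonical identification
\begin{align*}
(M\otimes_A B)\otimes_{B,\varphi_B} B[1/\widetilde{\xi}_B] \cong (M\otimes_{A,\varphi_A}A[1/\widetilde{\xi}_A])\otimes_{A[1/\widetilde{\xi}_A]} B[1/\widetilde{\xi}_B]
\end{align*}
uses precisely $f\circ\varphi_A=\varphi_B\circ f$. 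I would also note that $M\otimes_A B$ is finite projective over $B$ since finite projectivity is stable under base change, so it is a legitimate $\varphi$-module.

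For (ii), I would observe that if $\varPhi_M$ is an isomorphism then so is $\varPhi_M\otimes\id_{B[1/\widetilde{\xi}_B]}$, being the base change of an isomorphism along $A[1/\widetilde{\xi}_A]\to B[1/\widetilde{\xi}_B]$; composing with the canonical isomorphism above shows $\varPhi_{M\otimes_A B}$ is an isomorphism, which is the definition of finite height.

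For (iii), I would first invoke Proposition 1.25 (scalar extension of semi-linear representations) to get that $M\otimes_A B$ is a semi-linear representation of $G$ over $B$ with the action $g(m\otimes b)=gm\otimes gb$; this already handles finite projectivity and continuity of the $G$-action with respect to the canonical topology. It then remains to check that this $G$-action commutes with $\varphi_{M\otimes_A B}$ and that $\varPhi_{M\otimes_A B}$ is $G$-equivariant. Both follow by a diagram chase from the corresponding properties of $M$ over $A$, together with the $G$-equivariance of $f$ (so that $G$ acts compatibly on all the rings $A$, $B$, $A[1/\widetilde{\xi}_A]$, $B[1/\widetilde{\xi}_B]$ appearing in the composite): one checks equivariance on generators $m\otimes b$, where the $G$-action on $M\otimes_{A,\varphi_A}A[1/\widetilde{\xi}_A]$ and the interaction with $\varphi_B$ on the $B$-factor are both governed by hypotheses already in place.

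None of the three parts presents a genuine obstacle; this is a compatibility lemma whose only subtlety is bookkeeping. If anything, the point requiring slight care is making the canonical identification in (i) explicit and $G$-equivariant in (iii) — i.e.\ tracking that the isomorphism $(M\otimes_A B)\otimes_{B,\varphi_B}B[1/\widetilde{\xi}_B]\cong (M\otimes_{A,\varphi_A}A[1/\widetilde{\xi}_A])\otimes_{A[1/\widetilde{\xi}_A]}B[1/\widetilde{\xi}_B]$ intertwines the natural $G$-actions on both sides — but this is formal given $f\circ\varphi_A=\varphi_B\circ f$ and the $G$-equivariance of $f$. I would therefore present the proof as three short paragraphs, citing Proposition 1.25 for the continuity statement in (iii) and leaving the diagram chases to the reader.
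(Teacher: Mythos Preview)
Your proposal is correct and matches the paper's approach exactly: the paper declares (i) and (ii) obvious by definition and says that for (iii) the only nontrivial point is the continuity of the $G$-action on $M\otimes_A B$, which is handled by \cref{prop:scalar extension of semi-linear representations}. Your write-up simply spells out the bookkeeping that the paper leaves implicit.
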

\begin{proof}
The claims (1) and (2) are obvious by definition. As for the claim (3), the only non-trivial part is the continuity of the $G$-action on $M\otimes_A B$, which we proved in \cref{prop:scalar extension of semi-linear representations}.
\end{proof}

In the following, we use the following special cases of $(\varphi,G)$-modules.

\begin{definition}\label{def:phi Gamma module over AKp and AK}
We endow the weak topology on $\AKp$ and $\AK$, i.e., the product topology on $\AKp= (\mathcal{O}_{\Kinf}^\flat)^{\mathbb{N}}$ and $\AK= (\Kinf^\flat)^{\mathbb{N}}$. Note that the equalities are those of sets induced by the definition of $p$-typical Witt vectors. Then $\AKp$ and $\AK$ are topological rings with respect to the weak topologies and the natural $\Gamma_K$-actions are continuous. Let $\varphi\colon \AKp  \rightarrow \AKp$ (resp. $\varphi\colon \AK  \rightarrow \AK$) denote the Frobenius map of Witt vectors.
\begin{enumerate}
     \item A {\em$(\varphi,\Gamma)$-module over $\AKp$} is a finite free $\AKp$-module $N$ equipped with a semi-linear continuous $\Gamma_K$-action and a $\Gamma_K$-equivariant $\AKp[1/\varphi(\xi)]$-linear isomorphism
     \begin{align*}
          \varPhi_N\colon N\otimes_{\AKp,\varphi}\AKp[1/\varphi(\xi)] \xrightarrow{\cong } N[1/\varphi(\xi)].
     \end{align*}
     We write $\Mod_{\varphi,\Gamma}^{\fh}(\AKp)$ for the category of $(\varphi,\Gamma)$-modules over $\AKp$.
     \item A {\em$(\varphi,\Gamma)$-module over $\AK$} is a finite free $\AK$-module $M$ equipped with a semi-linear continuous $\Gamma_K$-action and a $\Gamma_K$-equivariant $\AK$-linear isomorphism 
     \begin{align*}
          \varPhi_M\colon M\otimes_{\AK,\varphi}\AK  \xrightarrow{\cong }M.
     \end{align*}
     We write $\Mod_{\varphi,\Gamma}^{\et}(\AK)$ for the category of $(\varphi,\Gamma)$-modules over $\AK$.
\end{enumerate}
\end{definition}

\begin{remark}
     Our terminology and definitions are slightly different from \cite{Fontaine1990}. First, he only assumed that the underlying modules are finitely generated. Also, the Frobenius map $\varPhi_M$ is not necessarily isomorphic and he defined a $(\varphi, \Gamma)$-module to be \'{e}tale if the Frobenius map is an isomorphism. Thus, our $(\varphi,\Gamma)$-module over $\AK$ is a ``free \'{e}tale $(\varphi,\Gamma)$-module over $\AK$'' in the sense of \cite{Fontaine1990}. Finite height in our sense is also different from that in \cite{Fontaine1990}.
\end{remark}

These definition are compatible with \cref{def:(phi G)-modules} by the lemmas below.

\begin{lemma}[{\cite[Chap. II, \S 3, no.2, Corollaire 2]{Bourbaki1961}}]\label{lem:finite projective over local ring}
Every finite projective module over a local ring is free.
\end{lemma}

\begin{lemma}[{\cite[Lemma 2 (1)]{Tsuji_simons}}]\label{lem:Ainf is a local ring}
     $\AKp$ is $(p,\xi)$-adically complete and separated. It is a local ring with maximal ideal $\Ker (\AKp \overset{\theta}{\twoheadrightarrow} \mathcal{O}_{\Kinf}\twoheadrightarrow  k_\infty )=(p, [\mathfrak{m}_{\Kinf}^\flat])$.
\end{lemma}
\begin{proof}
The first assertion follows from \cite[Lemma 2 (1)]{Tsuji_simons}. We include the proof here for convenience.
We first construct a non-zero divisor $\xi_p\in\AKp$ which is another generator of $\Ker \theta$. 
It is known that $v(\epsilon-1)=p/(p-1)$. We set $\tau:= (\epsilon^{1/p}-1)^{p-1} \in \mathcal{O}_{\Kinf}^\flat$, then $v(\tau)=1$. Take an element $u\in \AKp$ such that $\theta(pu)=\tau^{(0)}$. We define $\xi_p:=pu-[\tau]\in \AKp$. Then $\theta(\xi_p)=0$. We claim that the sequence
\begin{align*}
     0 \rightarrow \AKp \xrightarrow{\times \xi_p}\AKp  \xrightarrow{\theta} \mathcal{O}_{\Kinf}  \rightarrow 0 
\end{align*}
is exact. Since each term is $p$-torsion free and $p$-adically complete and separated, it is enough to show the exactness after modulo $p$ and it follows from the fact $v(\tau)=1$. Thus we obtain a non-zero divisor $\xi_p\in \AKp$ that is a generator of $\Ker \theta$.

Then, we have $(\xi,p)\AKp=([\tau],p)\AKp$. It is known that $\AKp/p \cong \mathcal{O}_{\Kinf}^\flat$ is $\tau$-adically complete and separated. As $\AKp$ is $p$-torsion free, we have an exact sequence $0  \rightarrow \AKp/p^n  \xrightarrow{\times p} \AKp/p^{n+1} \rightarrow \AKp/p  \rightarrow 0 $ for every $n\in \mathbb{Z}_{\geq 1}$. Since $\AKp/p\cong \mathcal{O}_{\Kinf}^\flat$ is $[\tau]$-torsion free, its reduction modulo $[\tau]^m$ remains exact. By taking the inverse limit, we obtain exact sequences
\[\begin{tikzcd}
0\arrow[r]&\AKp/p^n\arrow[r,"\times p"]\arrow[d]&\AKp/p^{n+1}\arrow[r]\arrow[d]&\AKp/p\arrow[r]\arrow[d]&0 \\ 
0\arrow[r]& \varprojlim_{m}(\AKp/p^n)/[\tau]^m\arrow[r,"\times p"]&\varprojlim_{m}(\AKp/p^{n+1})/[\tau]^m\arrow[r]&\varprojlim_{m}(\AKp/p)/[\tau]^m\arrow[r]&0.
\end{tikzcd}\]
By induction on $n$, we see that $\AKp/p^n$ is $[\tau]$-adically complete and separated for every $n\geq 1$. 
Therefore, we conclude that
\begin{align*}
     \AKp\cong  \varprojlim_{n}\AKp/p^n \cong  \varprojlim_{n}\Bigl( \varprojlim_{m}\AKp/(p^n, [\tau]^m)\Bigr)\cong  \varprojlim_{n}\AKp/(p,[\tau])^n.
\end{align*}
In order to prove that $\AKp$ is a local ring, it is enough to show that every element $x\in\AKp\setminus (p, [\mathfrak{m}_{\Kinf}^\flat])$ is invertible in $\AKp$. We first remark that $(p,[\mathfrak{m}_{\Kinf}^\flat])$ is the kernel of the surjective homomorphism $\AKp \twoheadrightarrow \mathcal{O}_{\Kinf}^\flat \twoheadrightarrow k_\infty$. As $\mathcal{O}_{\Kinf}^\flat$ is a local ring, $x \bmod p$ is invertible in $\mathcal{O}_{\Kinf}^\flat$. So there exists $y\in \AKp$ such that $xy\in 1+p\AKp$. Since $\AKp$ is $p$-adically complete and separated, we see that $1+p\AKp \subseteq (\AKp)^\times$. Hence $x$ is invertible, as desired. Since the homomorphism $\AKp \overset{\theta}{\twoheadrightarrow} \mathcal{O}_{\Kinf}\twoheadrightarrow  k_\infty$ coincides with $\AKp \twoheadrightarrow \mathcal{O}_{\Kinf}^\flat \twoheadrightarrow k_\infty$, the equality of ideals hold.
\end{proof}

Let $\Rep_{\mathbb{Z}_p}(G_K)$ denote the category of free $\mathbb{Z}_p$-representations of $G_K$, i.e., finite free $\mathbb{Z}_p$-modules equipped with continuous and linear $G_K$-actions. The following theorem is the origin of the theory of $(\varphi,G)$-modules.

\begin{theorem}[J.-M. Fontaine]\label{thm:(phi Gamma) modules}
The category of $(\varphi,\Gamma)$-modules over $\AK$ is equivalent to the category of free $\mathbb{Z}_p$-representations of $G_K$ via the functors
\[\begin{tikzcd}[row sep=tiny]
     \Mod_{\varphi,\Gamma}^{\et}(\AK)\arrow[r, <->, "\sim "]&\Rep_{\mathbb{Z}_p}(G_K) \\ 
     M\arrow[r, mapsto]\arrow[u, "\rotatebox{90}{$\in$}",phantom]& \bigl(M\otimes_{\AK} W(\mathbb{C}_p^\flat)\bigr)^{\varphi=1}\arrow[u, "\rotatebox{90}{$\in$}",phantom] \\[-0.25cm]
     \bigl(T\otimes_{\mathbb{Z}_p}W(\mathbb{C}_p^\flat)\bigr)^{H_K}&\arrow[l, mapsto] T.
\end{tikzcd}\]
\end{theorem}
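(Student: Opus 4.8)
The plan is to run Fontaine's proof of \cref{thm:classical phi Gamma module} with the perfect rings $\AK=W(\Kinf^\flat)$ and $W(\mathbb{C}_p^\flat)$ in place of $\mathbb{A}_K$ and Fontaine's $\mathbb{A}$; equivalently, one may deduce the statement from \cref{thm:classical phi Gamma module} by base change along the $\varphi$- and $\Gamma_K$-equivariant inclusion $\mathbb{A}_K\hookrightarrow\AK$. The one geometric input is the tilting equivalence for the perfectoid field $\Kinf=\widehat{K_\infty}$: it shows that $\Kinf^\flat$ is a complete nonarchimedean field of characteristic $p$ with absolute Galois group $\Gal\bigl(\overline{\Kinf^\flat}^{\mathrm{sep}}/\Kinf^\flat\bigr)\cong G_{\Kinf}=H_K$, that $\mathbb{C}_p^\flat$ is an algebraically closed perfectoid field containing $\overline{\Kinf^\flat}^{\mathrm{sep}}$ as a dense subfield, and that $\bigl(\mathbb{C}_p^\flat\bigr)^{H_K}=\Kinf^\flat$; taking Witt vectors gives $W(\mathbb{C}_p^\flat)^{H_K}=\AK$. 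Thus $\bigl(W(\mathbb{C}_p^\flat),\varphi,G_K\bigr)$ plays exactly the role of Fontaine's $(\mathbb{A},\varphi,G_K)$.

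First I would record the elementary facts. Since $\varphi$ acts on Teichm\"uller coordinates by $x\mapsto x^p$, one has $W(\mathbb{C}_p^\flat)^{\varphi=1}=W\bigl((\mathbb{C}_p^\flat)^{\varphi=1}\bigr)=W(\mathbb{F}_p)=\mathbb{Z}_p$. Because $\mathbb{C}_p^\flat$ is algebraically closed, $\varphi-1\colon\mathbb{C}_p^\flat\to\mathbb{C}_p^\flat$ is surjective, and $p$-adic dévissage promotes this to surjectivity of $\varphi-1$ on $W(\mathbb{C}_p^\flat)$ with kernel $\mathbb{Z}_p$. Likewise, $\mathbb{C}_p^\flat$ being the completed algebraic closure of $\Kinf^\flat$, the continuous Galois cohomology $H^1\bigl(H_K,\mathrm{GL}_n(\mathbb{C}_p^\flat)\bigr)$ is trivial, and dévissage gives triviality of $H^1\bigl(H_K,\mathrm{GL}_n(W(\mathbb{C}_p^\flat))\bigr)$; more precisely, $W(\mathbb{C}_p^\flat)$ trivializes every free $\mathbb{Z}_p$-representation of $H_K$ and descends it to a finite free $\AK$-module.

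Next I would define the two functors and check they are well defined. For $T\in\Rep_{\mathbb{Z}_p}(G_K)$, put $D(T):=\bigl(T\otimes_{\mathbb{Z}_p}W(\mathbb{C}_p^\flat)\bigr)^{H_K}$. Reducing modulo $p$ and using the vanishing of $H^1\bigl(H_K,\mathrm{GL}_n(\mathbb{C}_p^\flat)\bigr)$, one gets that $D(T)$ is finite free over $\AK$ of rank $\dim T$ and that $D(T)\otimes_{\AK}W(\mathbb{C}_p^\flat)\to T\otimes_{\mathbb{Z}_p}W(\mathbb{C}_p^\flat)$ is an isomorphism; the Frobenius of $W(\mathbb{C}_p^\flat)$ then induces an isomorphism $D(T)\otimes_{\AK,\varphi}\AK\xrightarrow{\ \sim\ }D(T)$, while the residual action of $\Gamma_K=G_K/H_K$ is semi-linear and continuous for the weak topology (using \cref{prop:scalar extension of semi-linear representations} for $\mathbb{Z}_p\to W(\mathbb{C}_p^\flat)$ and \cref{lem:continuity of subgroup} to pass from an open subgroup). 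Hence $D(T)\in\Mod_{\varphi,\Gamma}^{\et}(\AK)$. Conversely, for $M\in\Mod_{\varphi,\Gamma}^{\et}(\AK)$, put $V(M):=\bigl(M\otimes_{\AK}W(\mathbb{C}_p^\flat)\bigr)^{\varphi=1}$. Reducing modulo $p$, every étale $\varphi$-module over the algebraically closed field $\mathbb{C}_p^\flat$ admits a basis of $\varphi$-invariant vectors (the relevant Artin--Schreier-type scheme is finite étale, hence split over $\mathbb{C}_p^\flat$), and dévissage then shows $V(M)$ is finite free over $\mathbb{Z}_p$ of rank $\rank M$ with $V(M)\otimes_{\mathbb{Z}_p}W(\mathbb{C}_p^\flat)\to M\otimes_{\AK}W(\mathbb{C}_p^\flat)$ an isomorphism; the $\Gamma_K$-action on $M$ and the $G_K$-action on $W(\mathbb{C}_p^\flat)$ assemble into a continuous $G_K$-action on $V(M)$.

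Finally, $D$ and $V$ are quasi-inverse: the natural comparison maps $V(D(T))\to T$ and $M\to D(V(M))$ become isomorphisms after applying $-\otimes_{\mathbb{Z}_p}W(\mathbb{C}_p^\flat)$, resp. $-\otimes_{\AK}W(\mathbb{C}_p^\flat)$ — one composes the two descent isomorphisms above — and hence are isomorphisms by faithful flatness of $W(\mathbb{C}_p^\flat)$ over $\mathbb{Z}_p$ and over $\AK$; functoriality and the identification with the displayed functors are then formal. The step I expect to be the main obstacle is this reduction modulo $p$: one must know that $\mathbb{C}_p^\flat$ relative to $\Kinf^\flat$ enjoys the same cohomological vanishing that Fontaine's $\mathbb{A}$ enjoys relative to $\mathbb{A}_K$, namely the characteristic-$p$ Hilbert 90 for $H_K$-representations and the triviality of étale $\varphi$-modules over $\mathbb{C}_p^\flat$. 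Both are classical once one knows, via the tilting equivalence, that $H_K$ is the absolute Galois group of $\Kinf^\flat$ and that $\mathbb{C}_p^\flat=\widehat{\overline{\Kinf^\flat}}$; granting this, the remaining steps are the standard $p$-adic dévissage and faithfully flat descent, so no ingredient beyond the technology underlying \cref{thm:classical phi Gamma module} is required.
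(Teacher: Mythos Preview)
Your proposal is correct and is exactly the argument the paper has in mind: the paper does not give a proof of this theorem but only remarks (in \cref{rem:(phi Gamma) modules}) that it ``can be shown by the similar argument'' to Fontaine's proof of \cref{thm:classical phi Gamma module}, and what you have written is precisely that similar argument carried out with the perfect coefficient rings $\AK=W(\Kinf^\flat)\subset W(\mathbb{C}_p^\flat)$ in place of $\mathbb{A}_K\subset\mathbb{A}$. Your identification of the key inputs---the tilting equivalence giving $H_K\cong\Gal(\overline{\Kinf^\flat}/\Kinf^\flat)$ and $(\mathbb{C}_p^\flat)^{H_K}=\Kinf^\flat$, the Artin--Schreier/Hilbert 90 vanishing over $\mathbb{C}_p^\flat$, and $p$-adic d\'evissage---is accurate, and the faithful-flatness step is unproblematic since $\AK$ is a complete discrete valuation ring with uniformizer $p$ and $W(\mathbb{C}_p^\flat)$ is $p$-torsion free.
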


\begin{remark}\label{rem:(phi Gamma) modules}
    In \cite{Fontaine1984}, Fontaine claimed an equivalence of categories between the category of \'{e}tale $(\varphi,\Gamma)$-modules over $\AK[1/p]$ and the category of $p$-adic Galois representations of $G_K$ without giving a proof. Then in \cite{Fontaine1990}, he constructed a subring $\mathbb{A}_K$ of $\AK$ stable under the Frobenius map and the $\Gamma_K$-action, and showed an equivalence of categories between the category of \'{e}tale $(\varphi,\Gamma)$-modules over $\mathbb{A}_K[1/p]$ (resp. $\mathbb{A}_K$) and the category of $p$-adic Galois representations of $G_K$ (resp. $\mathbb{Z}_p$-representations of $G_K$, i.e., finitely generated $\mathbb{Z}_p$-modules equipped with continuous and linear $G_K$-actions.). The above theorem can be shown by a similar argument. Note that $\AK[1/p]$ is denoted by $\Frac W(R_L)$ in \cite{Fontaine1984} and $\mathbb{A}_K$ is denoted by $\mathcal{O}_{\mathcal{E}}$ and $\mathbb{A}_K[1/p]$ is denoted by $\mathcal{E}$ in \cite{Fontaine1990}.
\end{remark}

Combining it with \cref{prop:scalar extension of phi-modules}, we obtain the following corollary:

\begin{corollary}\label{cor:well-definedness as p-adic rep}
There exists a natural functor
\[\begin{tikzcd}[column sep=large,row sep=small]
T\colon \Mod_{\varphi,\Gamma}^{\fh}(\AKp)\arrow[r, "\otimes_{\AKp}\AK"]&\Mod_{\varphi,\Gamma}^{\et}(\AK)\arrow[r,"\cong "]&\Rep_{\mathbb{Z}_p}(G_K) \\ 
N\arrow[r,mapsto]\arrow[u, "\rotatebox{90}{$\in$}",phantom]&N\otimes_{\AKp}\AK\arrow[r, mapsto]\arrow[u, "\rotatebox{90}{$\in$}",phantom]&\bigl(N\otimes_{\AKp}W(\mathbb{C}_p^\flat)\bigr)^{\varphi=1}.\arrow[u, "\rotatebox{90}{$\in$}",phantom]
\end{tikzcd}\]
Also, we have an equality $\rank_{\AKp} N=\rank_{\mathbb{Z}_p} T(N)$.
\end{corollary}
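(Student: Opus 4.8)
The plan is to check that each arrow in the diagram is a well-defined functor and then trace through the objects. The first arrow $\otimes_{\AKp}\AK$ is an instance of scalar extension, so I would invoke \cref{prop:scalar extension of phi-modules}: the inclusion $\AKp\hookrightarrow\AK$ is a ring homomorphism commuting with $\varphi$ (both are the Witt-vector Frobenius), it sends $\varphi(\xi)$ to $\varphi(\xi)$, and it is $\Gamma_K$-equivariant and continuous for the weak topologies (this last point is essentially the observation that $\mathcal{O}_{\Kinf}^\flat\hookrightarrow\Kinf^\flat$ is continuous, hence the product topology restricts correctly). Thus part (iii) of that proposition produces from a $(\varphi,\Gamma)$-module $N$ over $\AKp$ a $(\varphi,\Gamma)$-module $N\otimes_{\AKp}\AK$ over $\AK$, provided I note two things: the underlying module stays finite free (base change of finite free is finite free), and $\varPhi_N$ being an isomorphism after inverting $\varphi(\xi)$ forces $\varPhi_{N\otimes\AK}\colon (N\otimes\AK)\otimes_{\AK,\varphi}\AK\to N\otimes\AK$ to be an isomorphism — here one uses that $\varphi(\xi)$ is already a unit in $\AK$ (since $\xi$ and hence $\varphi(\xi)$ is invertible in $W(\Kinf^\flat)$, as $\xi$ maps to a unit in $\Kinf^\flat$ under the projection to the first Witt component), so inverting it changes nothing and the base-changed $\varPhi$ is literally the isomorphism from (ii) of the proposition.

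Next I would compose with Fontaine's equivalence, \cref{thm:(phi Gamma) modules}, which sends a $(\varphi,\Gamma)$-module $M$ over $\AK$ to the free $\mathbb{Z}_p$-representation $\bigl(M\otimes_{\AK}W(\mathbb{C}_p^\flat)\bigr)^{\varphi=1}$. Applying this to $M=N\otimes_{\AKp}\AK$ and using associativity of tensor products, $\bigl((N\otimes_{\AKp}\AK)\otimes_{\AK}W(\mathbb{C}_p^\flat)\bigr)^{\varphi=1}=\bigl(N\otimes_{\AKp}W(\mathbb{C}_p^\flat)\bigr)^{\varphi=1}$, which is precisely the formula claimed for $T(N)$. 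Functoriality of the composite is automatic since each stage is a functor; in particular a morphism of $(\varphi,\Gamma)$-modules over $\AKp$ induces, by base change, a morphism over $\AK$, and then Fontaine's functor carries it to a $G_K$-equivariant $\mathbb{Z}_p$-linear map.

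For the rank statement, I would argue that all three functors preserve rank: base change along $\AKp\to\AK$ preserves the rank of a finite free module, and Fontaine's equivalence is rank-preserving because it is an equivalence of categories under which the rank-one objects on each side correspond and tensor/direct-sum structures match — more concretely, $\dim_{\mathbb{Z}_p}\bigl(M\otimes_{\AK}W(\mathbb{C}_p^\flat)\bigr)^{\varphi=1}=\rank_{\AK}M$ is part of the content of \cref{thm:(phi Gamma) modules} (the comparison isomorphism $M\otimes_{\AK}W(\mathbb{C}_p^\flat)\cong T\otimes_{\mathbb{Z}_p}W(\mathbb{C}_p^\flat)$ forces equality of ranks after extending scalars to $W(\mathbb{C}_p^\flat)$). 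I do not anticipate a genuine obstacle here; the only point requiring a moment's care is the continuity of the scalar extension in part (iii), but this is exactly what \cref{prop:scalar extension of semi-linear representations} was set up to handle, so the corollary is essentially a bookkeeping assembly of results already in place.
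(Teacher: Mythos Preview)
Your proposal is correct and follows essentially the same route as the paper's proof: invoke \cref{prop:scalar extension of phi-modules} for the base-change functor, compose with Fontaine's equivalence \cref{thm:(phi Gamma) modules}, and read off the rank equality from the comparison isomorphism $N\otimes_{\AKp}W(\mathbb{C}_p^\flat)\cong T(N)\otimes_{\mathbb{Z}_p}W(\mathbb{C}_p^\flat)$. You supply more detail than the paper (notably the check that $\varphi(\xi)\in\AK^\times$, which the paper leaves implicit in \cref{ex:(phi Gamma)-modules over AK} and \cref{def:phi Gamma module over AKp and AK}), but the structure of the argument is identical.
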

\begin{proof}
     By \cref{prop:scalar extension of phi-modules}, the first base change functor is well-defined. Combining this with \cref{thm:(phi Gamma) modules}, we see that the functor $T$ is well-defined. The last assertion follows from the $W(\mathbb{C}_p^\flat)$-linear isomorphism (\cite[Proof of Proposition 1.2.6]{Fontaine1990})
     \begin{align}\label{eq:isomorphism of Fontaine}
          N\otimes_{\AKp}W(\mathbb{C}_p^\flat)\cong T(N)\otimes_{\mathbb{Z}_p} W(\mathbb{C}_p^\flat).
     \end{align}
\end{proof}

Let $\Rep_{\mathbb{Z}_p}^{\crys} (G_K)$ denote the category of free crystalline $\mathbb{Z}_p$-representations of $G_K$, i.e., free $\mathbb{Z}_p$-representations of $G_K$ such that the associated $p$-adic Galois representations are crystalline. 
Our goal is to construct an equivalence of categories between the category of $(\varphi,\Gamma)$-modules over $\AKp$ with additional conditions and the category of free crystalline $\mathbb{Z}_p$-representations of $G_K$ via the above functor $T$.

\newpage
\section{Crystalline $(\varphi,\Gamma)$-modules over $\AKp$}
\subsection{Triviality modulo $\mu$}

In this subsection, we study triviality modulo $\mu$ when the coefficient ring is $\AKp$.

\begin{lemma}\label{lem:G-fixed part of a local ring}
Let $(R,\mathfrak{m})$ be a local ring equipped with an action of a group $G$. Then, $R^G$ is a local ring with maximal ideal $\mathfrak{m}^G$.
\end{lemma}
\begin{proof}
It suffices to show that $\mathfrak{m}^G$ is an ideal of $R^G$ and $R^G\setminus \mathfrak{m}^G=(R^G)^\times$. Obviously, $0\in \mathfrak{m}^G$. For any $m_1,m_2\in \mathfrak{m}^G$, we have $g(m_1+m_2)=g(m_1)+g(m_2)=m_1+m_2$ for any $g\in G$, so $m_1+m_2\in R^G \cap \mathfrak{m}=\mathfrak{m}^G$. For any $r\in R^G$, $m\in \mathfrak{m}^G$, we have $g(rm)=g(r)g(m)=rm$, so $rm\in R^G \cap \mathfrak{m}=\mathfrak{m}^G$. Thus $\mathfrak{m}^G$ is an ideal.

We next show the equality $R^G\setminus \mathfrak{m}^G=(R^G)^\times$. On the one hand, for any $r\in R^G\setminus \mathfrak{m}^G$, we have $r\in R\setminus\mathfrak{m}$ since $\mathfrak{m}^G=R^G\cap \mathfrak{m}$. So, $r$ is invertible in $R$. Then, $g(r^{-1}) r=g(r^{-1}) g(r)=g(r r^{-1})=g(1)=1$ for any $g\in G$. So, $g(r^{-1})=r^{-1}$ for any $g\in G$. This shows that $r^{-1}\in R^G$ and $r\in (R^G)^\times$. On the other hand, for any $r\in (R^G)^\times$, $r$ is also invertible in $R$. So $r\in R\setminus \mathfrak{m}$. Thus $r\in R^G\setminus \mathfrak{m}^G$.
\end{proof}

\begin{proposition}\label{prop:G-fixed part of a local ring}
$(\AKp/\mu)^{\Gamma_K}$ is a local ring and every finite projective $(\AKp/\mu)^{\Gamma_K}$-module is free.
\end{proposition}
\begin{proof}
The assertion follows from \cref{lem:Ainf is a local ring,lem:finite projective over local ring,lem:G-fixed part of a local ring}.
\end{proof}

\begin{proposition}[{\cite[Lemma 1.3]{MorrowTsuji2021}}]\label{prop:automatic continuity}
Let $N$ be a finite free $\AKp$-module equipped with a semi-linear $\Gamma_K$-action. If the $\Gamma_K$-action on $N$ is trivial modulo $\mu$, then it is continuous.
\end{proposition}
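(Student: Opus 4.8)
The plan is to reduce the continuity of the $\Gamma_K$-action to a statement about a single open subgroup acting on the free module $\AKp^n$, exploiting the triviality modulo $\mu$ hypothesis and the fact that $\AKp$ carries the weak topology, which is $p$-adic-like in a suitable sense. First I would recall that $(\AKp/\mu)^{\Gamma_K}$ acts on $(N/\mu)^{\Gamma_K}$, which is finite projective over it, and the canonical map $(N/\mu)^{\Gamma_K}\otimes_{(\AKp/\mu)^{\Gamma_K}}\AKp/\mu\to N/\mu$ is an isomorphism; picking a basis of $(N/\mu)^{\Gamma_K}$ over the local ring $(\AKp/\mu)^{\Gamma_K}$ (it is local, being a $\Gamma_K$-fixed subring of a local ring, so by \cref{lem:finite projective over local ring} the module is free), we obtain a basis of $N/\mu$ on which $\Gamma_K$ acts trivially. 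Lifting this basis to $N$ gives an $\AKp$-basis $e_1,\dots,e_n$ of $N$ such that for every $g\in\Gamma_K$ we have $g(e_i)\equiv e_i \bmod \mu N$.

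Next I would bootstrap from $\mu N$ to $\mu^m N$. Writing $g(e_i)=e_i+\sum_j a_{ij}(g)e_j$ with $a_{ij}(g)\in\mu\AKp$, and using the cocycle identity $a(gh)=a(g)+g(a(h))\cdot(\text{matrix for }g)$ together with the $G$-stability of the ideals $\mu^m\AKp$ (note $\mu$ generates a $\Gamma_K$-stable ideal since $g(\mu)=[\epsilon]^{\chi(g)}-1$ is a unit multiple of $\mu$ plus higher terms, in fact $g(\mu)\in\mu\AKp$), one shows that continuity of $g\mapsto a_{ij}(g)$ as a map $\Gamma_K\to\mu\AKp$ with the $\mu$-adic (equivalently weak) topology suffices, and that it is enough to check this on an open subgroup by \cref{lem:continuity of subgroup}. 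So the task becomes: produce an open subgroup $\Gamma'\subseteq\Gamma_K$ and show $g\mapsto g(e_i)$ is continuous on $\Gamma'$ into $N$ with the canonical (= product, via the chosen basis) topology. Here I would invoke the analogue of \cite[Lemma 1.3.]{MorrowTsuji2021}: the weak topology on $\AKp=W(\mathcal{O}_{\Kinf}^\flat)$ is the $(p,\mu)$-adic topology (or at least admits a countable basis of $\Gamma_K$-stable open subgroups, the images of $p^a\AKp+W(\mathfrak{m}_{\Kinf}^\flat)^b$-type ideals), and $\Gamma_K$ acts continuously on $\AKp$ itself; combining this with the fact that the entries $a_{ij}(g)$ lie in $\mu\AKp$ and a successive-approximation argument modulo $\mu^m\AKp+p^m\AKp$, one checks the map is continuous.

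The main obstacle I anticipate is the last point: carefully identifying the weak topology on $\AKp$ with a genuinely adic topology (or with one having a nice countable neighborhood basis of $\Gamma_K$-stable ideals) so that the cocycle $g\mapsto a(g)$ can be controlled level by level. The subtlety is that $\mathcal{O}_{\Kinf}^\flat$ is not discretely valued and the weak topology on $W$ of it is the product topology on Witt components, which is finer than the $\mu$-adic topology alone; one needs both $p$ and $\mu$ (and really the maximal ideal of $\mathcal{O}_{\Kinf}^\flat$). I would handle this by showing that a cofinal system of open neighborhoods of $0$ in $\AKp$ is given by $p^a\AKp+[\mathfrak{m}_{\Kinf}^{\flat}]^{\,b}$-type subgroups which are $\Gamma_K$-stable (using that $\Gamma_K$ preserves the valuation on $\mathcal{O}_{\Kinf}^\flat$), reducing continuity of $g\mapsto a_{ij}(g)$ to continuity modulo each such subgroup, and then the triviality modulo $\mu$ plus an inductive argument on $b$ (and the trivial $p$-adic part) finishes it. Once $g\mapsto g(e_i)$ is continuous, semilinearity and continuity of multiplication on $\AKp$ (\cref{prop:scalar extension of semi-linear representations}, applied with the trivial representation base-changed along $\mathbb{Z}_p\to\AKp$, or directly) give continuity of $\Gamma_K\times N\to N$.
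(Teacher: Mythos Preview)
Your overall strategy is sound in spirit, but there is a missing structural step that the paper uses to make the argument short. The paper first observes that the weak topology on $\AKp$ \emph{is} the $(p,\mu)$-adic topology (so the canonical topology on $N$ is $(p,\mu)$-adic by \cref{prop:adic topology}, and your worry about $p^a\AKp+[\mathfrak{m}_{\Kinf}^\flat]^b$-type neighborhoods is unnecessary). Then, rather than wrestling with cocycles for all of $\Gamma_K$, it passes to an open subgroup $\Gamma'\subseteq\Gamma_K$ with $\Gamma'\cong\mathbb{Z}_p$ as a topological group; such a subgroup exists because $\Gamma_K$ sits as an open subgroup of $\mathbb{Z}_p^\times$ via the cyclotomic character. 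Triviality modulo $\mu$ descends to triviality modulo $(p,\mu)$ for $\Gamma'$ via \cref{prop:trivial mod I implies mod J} and \cref{prop:triviality of subgroup}, and then \cite[Lemma~1.3]{MorrowTsuji2021} applies as a black box: that lemma is formulated for a topologically cyclic group, and its proof is exactly the iteration $(\gamma-1)^mN\subseteq(p,\mu)^mN$ for a generator $\gamma$, together with a binomial estimate for $\gamma^{p^n}-1$. Finally \cref{lem:continuity of subgroup} promotes continuity from $\Gamma'$ to $\Gamma_K$.

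Your proposal mentions reducing to ``an open subgroup'' but never specifies $\Gamma'\cong\mathbb{Z}_p$, and the successive-approximation you outline has no clear engine without a single topological generator to iterate. The bootstrap from $\mu N$ to $\mu^mN$ you sketch is precisely the content of the Morrow--Tsuji lemma, but for a non-procyclic group there is no obvious way to push the induction; so either way you need the $\mathbb{Z}_p$-reduction. Once you insert that step, your cocycle and basis-lifting discussion becomes redundant and the proof collapses to the paper's few lines.
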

\begin{proof}
We remark that the weak topology on $\AKp$ is the same as the $(p,\mu)$-adic topology, so the canonical topology on $N$ is the same as the $(p,\mu)$-adic topology by \cref{prop:adic topology}. Since $\Gamma_K$ is an open subgroup of $\mathbb{Z}_p^\times$ via the cyclotomic character, there exists an open subgroup $\Gamma'\subseteq \Gamma_K$ which is isomorphic to $\mathbb{Z}_p$ as a topological group. The $\Gamma'$-action on $N$ is trivial modulo $(p,\mu)$ by \cref{prop:trivial mod I implies mod J,prop:triviality of subgroup}. Hence the $\Gamma'$-action on $N$ is continuous by \cite[Lemma 1.3]{MorrowTsuji2021}. Then the assertion follows from \cref{lem:continuity of subgroup}.
\end{proof}

We prove two lemmas needed for \cref{prop:Tsuji Prop 76}.

\begin{lemma}[{\cite[Lemma 74]{Tsuji_simons}}]\label{lem:Tsuji Lemma 74}
We have 
\begin{align*}
     (\AKp/\mu)(r)^{\Gamma_K}=0
\end{align*}
for any $r\in \mathbb{Z}\setminus \{0\}$, where $(r)$ denotes the $r$-th Tate twist.
\end{lemma}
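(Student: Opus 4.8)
The plan is to reduce the computation of $(\AKp/\mu)(r)^{\Gamma_K}$ to an explicit statement about the ring $\mathcal{O}_{\Kinf}^\flat/\epsilon_1$, where $\epsilon_1 = \epsilon - 1$ is the image of $\mu$ under the first Witt-component projection, together with the action of $\Gamma_K$ twisted by the $r$-th power of the cyclotomic character. First I would unwind what $\AKp/\mu$ is: since $\mu = [\epsilon] - 1$ and the Witt-vector Frobenius and ghost-component formalism identify $\AKp/\mu$ with a concrete ring, one should first observe that $\AKp/\mu$ is $p$-adically complete and that its reduction mod $p$ is $\mathcal{O}_{\Kinf}^\flat/(\epsilon - 1)$. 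The ideal $(\epsilon - 1) \subseteq \mathcal{O}_{\Kinf}^\flat$ is principal and $\Gamma_K$-stable, and the key input is that $v^\flat(\epsilon - 1) > 0$ so that $\mathcal{O}_{\Kinf}^\flat/(\epsilon-1)$ is a "small" ring on which $\Gamma_K$ acts through a quotient with well-understood fixed points.

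The key steps, in order: (1) identify $(\AKp/\mu)$ and its mod-$p$ reduction explicitly, and check that taking $\Gamma_K$-invariants commutes with the relevant filtration by powers of $p$, so that it suffices to compute $\big(\mathcal{O}_{\Kinf}^\flat/(\epsilon-1)\big)(r)^{\Gamma_K}$ and control the extension problem in $p$; (2) analyze the $\Gamma_K$-action on $\mathcal{O}_{\Kinf}^\flat/(\epsilon-1)$ — here one uses that $\Gamma_K$ acts on $\epsilon$ via $\gamma(\epsilon) = \epsilon^{\chi(\gamma)}$, so modulo $\epsilon - 1$ the element $\epsilon$ becomes $1$ but the \emph{derivative} of the action, i.e. how $\Gamma_K$ twists nearby elements, is governed by $\chi$; (3) decompose the ring additively so that the Tate twist by $r \neq 0$ forces the invariants to vanish, the point being that an element fixed by $\Gamma_K$ up to the nontrivial character $\chi^r$ must be a $\chi^r$-eigenvector, and such eigenvectors are killed because $\chi^r - 1$ is (up to a unit times a power of $p$) invertible on the relevant pieces once $r \neq 0$; (4) lift back through the $p$-adic filtration: if each graded piece $p^n(\AKp/\mu)/p^{n+1}(\AKp/\mu)$ twisted by $r$ has no invariants, then neither does $(\AKp/\mu)(r)$, using an inverse-limit / $\lim^1$ argument which is clean because the transition maps are surjective.

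The main obstacle will be step (3): making precise why the twisted action on $\mathcal{O}_{\Kinf}^\flat/(\epsilon-1)$ has no invariants. One cannot simply say "$\chi^r - 1$ is invertible" on the nose, since the ring is not a $\mathbb{Q}_p$-algebra and $\chi^r(\gamma) - 1$ can be highly divisible by $p$; the correct approach, following Tsuji, is presumably to use that $\mathcal{O}_{\Kinf}^\flat/(\epsilon - 1)$ has a valuation for which one can run a successive-approximation argument: given $x$ with $\gamma(x) = \chi(\gamma)^{-r} x$ for all $\gamma$ (the twist convention), choose $\gamma$ with $\chi(\gamma)$ a topological generator, compare leading terms, and show the valuation of $x$ must be $+\infty$. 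I would structure this as: reduce to $\gamma_0$ a fixed generator of (an open subgroup of) $\Gamma_K$, expand $x$ in a suitable basis, and observe that $(\gamma_0 - \chi(\gamma_0)^{-r})$ is injective with closed image not containing the relevant elements — the injectivity coming from the fact that $\chi(\gamma_0)^{-r} \not\equiv 1$ modulo the maximal ideal raised to an appropriate power, which holds precisely because $r \neq 0$. The extension/limit bookkeeping in steps (1) and (4) is routine given the completeness statements already assembled in the Preliminaries (in particular \cref{prop:adic topology} and \cref{prop:Galois fix part}).
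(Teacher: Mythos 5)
There is a genuine gap, and it is fatal to the strategy rather than a fixable detail. Your plan is to reduce modulo $p$ and compute the twisted invariants of the graded pieces $p^n(\AKp/\mu)/p^{n+1}(\AKp/\mu) \cong \mathcal{O}_{\Kinf}^\flat/(\epsilon-1)$, arguing that the twist by $\chi_\cyc^r$ with $r\neq 0$ kills invariants there. But $\mathcal{O}_{\Kinf}^\flat/(\epsilon-1)$ is a ring of characteristic $p$, so the scalar $\chi_\cyc(\gamma)^r\in\mathbb{Z}_p^\times$ can only act through its image in $\mathbb{F}_p^\times$. Whenever $\chi_\cyc(\Gamma_K)\subseteq 1+p\mathbb{Z}_p$ — which happens exactly when $\zeta_p\in K$, and more generally nothing prevents $\chi_\cyc(\Gamma_K)\subseteq 1+p^m\mathbb{Z}_p$ for large $m$ — the reduction $\chi_\cyc^r \bmod p$ is the trivial character, so the twisted action on the graded pieces is the untwisted one. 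And that action certainly has nonzero invariants: the Teichm\"uller image of $k_\infty\subseteq\mathcal{O}_{\Kinf}^\flat$ is $\Gamma_K$-fixed and survives the quotient by $(\epsilon-1)$. So your step (3) is false as stated — the operator $\gamma_0-\chi_\cyc(\gamma_0)^{-r}$ is \emph{not} injective on $\mathcal{O}_{\Kinf}^\flat/(\epsilon-1)$ in this case — and consequently the d\'evissage in steps (1) and (4) never gets off the ground. The nontriviality of a Tate twist by $r\neq 0$ is an intrinsically $p$-adic phenomenon and is invisible modulo $p$; no valuation-theoretic refinement of the mod-$p$ argument can rescue this.

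The paper's proof takes a different reduction that keeps the $\mathbb{Z}_p$-structure alive: instead of going mod $p$, it goes mod $\xi$, landing in $\AKp/\xi\cong\mathcal{O}_{\Kinf}$, a mixed-characteristic valuation ring where the Tate twist genuinely acts, and where Tate's theorem gives $\mathcal{O}_{\Kinf}(r)^{\Gamma_K}=0$ for $r\neq 0$. The price is that $\xi$ does not generate $\mu\AKp$, only divides it; this is compensated by applying $\varphi^m$ for all $m\geq 0$ to the twisted-invariance relation (using $\varphi^m(\mu)\AKp\subseteq\xi\AKp$), which shows $\varphi^m(a)\in\Ker\theta$ for all $m$, and then invoking Fontaine's characterization $\mu\AKp=\{a\in\AKp\mid\varphi^m(a)\in\Ker\theta\text{ for all }m\geq 0\}$ to conclude $a\in\mu\AKp$. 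If you want a d\'evissage-style proof, the lesson is that the filtration to use is by $\xi$ combined with $\varphi$-iterates, not by $p$.
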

\begin{proof}
Let $\chi_\cyc\colon \Gamma_K  \rightarrow \mathbb{Z}_p^\times$ denote the cyclotomic character of $\Gamma_K$. Let $a\in \AKp$ be an element such that $\chi_\cyc^r(g)g(a)-a\in \mu\AKp$ for all $g\in \Gamma_K$. By taking $\varphi^m$ for $m\in \mathbb{Z}_{\geq 0}$, we obtain
\begin{align*}
     \chi_\cyc^r(g)g(\varphi^m(a))-\varphi^m(a)\in \varphi^m(\mu)\AKp\subseteq \xi\AKp.
\end{align*}
Thus for all $m\in \mathbb{Z}_{\geq 0}$, we have
\begin{align*}
     \varphi^m(a)\in \bigl(\AKp/\xi\bigr)(r)^{\Gamma_K}=\mathcal{O}_{\Kinf}(r)^{\Gamma_K}=\{0\}.
\end{align*}
Here the last equality follows from \cite[Theorem 2]{Tate1967}. This shows $\varphi^m(a)\in \xi\AKp$ for all $m\in \mathbb{Z}_{\geq 0}$. By the same argument as \cite[5.1.4. Lemme. i)]{Fontaine1994} or taking the $H_K$-invariant part of it, we have an equality
\begin{align*}
     \mu\AKp=\{a\in \AKp\mid \varphi^m(a)\in \Ker \theta\text{ for all }m\in \mathbb{Z}_{\geq 0}\}.
\end{align*}
Thus we conclude $a\in \mu\AKp$.
\end{proof}

\begin{lemma}[{\cite[Lemma 75]{Tsuji_simons}}]\label{lem:Tsuji Lemma 75}
For any $r\in \mathbb{Z}$, we have a $\Gamma_K$-equivariant $\AKp$-linear isomorphism
\begin{align*}
     \AKp/\mu (-r)  \xrightarrow[\cong]{\times \mu^{-r}} \mu^{-r}\AKp/\mu^{-r+1}\AKp. 
\end{align*}
\end{lemma}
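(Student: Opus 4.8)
The statement to prove is that multiplication by $\mu^{-r}$ gives a $\Gamma_K$-equivariant $\AKp$-linear isomorphism $\AKp/\mu\,(-r) \xrightarrow{\cong} \mu^{-r}\AKp/\mu^{-r+1}\AKp$. Here $\mu^{-r}\AKp$ means the $\AKp$-submodule $\{x \in \AKp[1/\mu] \mid \mu^r x \in \AKp\}$ when $r > 0$, and simply $\mu^{|r|}\AKp \subseteq \AKp$ when $r \le 0$; either way $\mu^{-r}\AKp/\mu^{-r+1}\AKp$ makes sense as a subquotient of $\AKp[1/\mu]$ (note $\mu$ is a non-zero-divisor in $\AKp$, since $\epsilon - 1$ is a non-zero-divisor in $\mathcal{O}_{\Kinf}^\flat$, so there is no ambiguity in these $\mu$-power submodules). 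The plan is to write down the map explicitly and check it is well-defined, bijective, $\AKp$-linear, and $\Gamma_K$-equivariant; everything is essentially formal once the twist is accounted for correctly.

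First I would define the map at the level of $\AKp$-modules: send the class of $a \in \AKp$ to the class of $\mu^{-r}a$ in $\mu^{-r}\AKp/\mu^{-r+1}\AKp$. This is $\AKp$-linear and well-defined because $a \in \mu\AKp$ implies $\mu^{-r}a \in \mu^{-r+1}\AKp$. It is surjective by the very definition of $\mu^{-r}\AKp$ (every element of $\mu^{-r}\AKp$ is $\mu^{-r}a$ for some $a \in \AKp$), and injective because $\mu^{-r}a \in \mu^{-r+1}\AKp$ forces $a \in \mu\AKp$ (using again that $\mu$ is a non-zero-divisor, so multiplication by $\mu^{-r}$ is injective on $\AKp[1/\mu]$). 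Thus as plain $\AKp$-modules the map $\AKp/\mu \xrightarrow{\times\mu^{-r}} \mu^{-r}\AKp/\mu^{-r+1}\AKp$ is an isomorphism.

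The only substantive point is the Galois twist. For $g \in \Gamma_K$ one has $g(\mu) = g([\epsilon]-1) = [\epsilon^{\chi_\cyc(g)}] - 1 \equiv \chi_\cyc(g)\,\mu \pmod{\mu^2}$, and more precisely $g(\mu) = \mu \cdot u_g$ where $u_g \in \AKp^\times$ with $u_g \equiv \chi_\cyc(g) \pmod{\mu}$. Hence on $\mu^{-r}\AKp/\mu^{-r+1}\AKp$ the $\Gamma_K$-action is: $g$ acts on the class of $\mu^{-r}a$ by $g(\mu^{-r}a) = \mu^{-r} u_g^{-r}\, g(a) \equiv \chi_\cyc(g)^{-r}\,\mu^{-r}g(a) \pmod{\mu^{-r+1}\AKp}$. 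That is exactly the action on $\AKp/\mu$ twisted by $\chi_\cyc^{-r}$, i.e. the action on $\AKp/\mu\,(-r)$. So the map intertwines the two $\Gamma_K$-actions, completing the proof. I would phrase this computation carefully since getting the sign/direction of the twist right (that it is $(-r)$ and not $(r)$) is the one place an error could creep in; everything else is bookkeeping. I expect the reference to $g(\mu) \equiv \chi_\cyc(g)\mu \pmod{\mu^2}$, which presumably appears or is used earlier, to carry the load, and no further input (such as Tate's theorem, used in the previous lemma) is needed here.
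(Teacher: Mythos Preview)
Your proposal is correct and follows essentially the same approach as the paper: the paper also dispatches the $\AKp$-linear bijectivity as obvious and then verifies $\Gamma_K$-equivariance via the computation $g(\mu)=\chi_\cyc(g)\mu(1+\mu a_g)$ for some $a_g\in\AKp$, yielding $g(\mu)^{-r}\equiv\chi_\cyc(g)^{-r}\mu^{-r}\pmod{\mu^{-r+1}\AKp}$. Your remark that the only care needed is getting the direction of the twist right is well taken.
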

\begin{proof}
The map is obviously $\AKp$-linear bijective map. We check that this is $\Gamma_K$-equivariant. It is enough to show that $g(\mu)^{-r}= \chi_\cyc(g)^{-r}\mu^{-r}$ in $\mu^{-r}\AKp/\mu^{-r+1}\AKp$. Since
\begin{align*}
     g(\mu)=(\mu+1)^{\chi_\cyc(g)}-1
     =\sum_{n=0}^\infty \begin{pmatrix}
     \chi_\cyc(g)\\
     n
     \end{pmatrix}\mu^n-1
     =\chi_\cyc(g)\mu(1+\mu a_g)
\end{align*}
for some $a_g\in \AKp$, we see
\begin{align*}
     g(\mu)^{-r}=\chi_\cyc(g)^{-r}\mu^{-r}(1+\mu a_g)^{-r}\equiv \chi_\cyc(g)^{-r}\mu^{-r}\qquad \bmod \mu^{-r+1}\AKp.
\end{align*}
This completes the proof.
\end{proof}

The next proposition is needed to show \cref{thm:Main theorem for crystalline}. More precisely, it shows the functor in \cref{thm:Main theorem for crystalline} is full. 

\begin{proposition}[{\cite[Proposition 76]{Tsuji_simons}}]\label{prop:Tsuji Prop 76}
We have the following:
\begin{enumerate}
     \item Let $N$ be a finite free $\AKp$-module equipped with a semi-linear $\Gamma_K$-action. Then there exists at most one $\Gamma_K$-stable finite free $\AKp$-submodule $N'$ of $N[1/\mu]$ satisfying the following properties:
     \begin{enumerate}
          \item The homomorphism $N'[1/\mu] \rightarrow N[1/\mu]$ is an isomorphism.
          \item The $\Gamma_K$-action on $N'$ is trivial modulo $\mu$.
     \end{enumerate}
     \item Let $N_1$ and $N_2$ be finite free $\AKp$-modules endowed with a semi-linear $\Gamma_K$-action trivial modulo $\mu$. Then any $\Gamma_K$-equivariant $\AKp[1/\mu]$-linear homomorphism $f\colon N_1[1/\mu] \rightarrow N_2[1/\mu]$ satisfies $f(N_1)\subseteq N_2$.
\end{enumerate}
\end{proposition}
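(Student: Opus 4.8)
The plan is to reduce (i) to (ii) and then prove (ii) by a ``descent along $\mu$'' argument. For (i), suppose $N'$ and $N''$ are two $\Gamma_K$-stable finite free $\AKp$-submodules of $N[1/\mu]$ both satisfying (a) and (b). Then the identity map of $N[1/\mu]$ is a $\Gamma_K$-equivariant $\AKp[1/\mu]$-linear isomorphism $N'[1/\mu]\xrightarrow{\cong}N''[1/\mu]$, and both $N'$ and $N''$ carry a $\Gamma_K$-action trivial modulo $\mu$ by (b). Applying (ii) to $(N_1,N_2)=(N',N'')$ gives $N'\subseteq N''$ inside $N[1/\mu]$, and by symmetry $N''\subseteq N'$; hence $N'=N''$. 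So it remains to prove (ii).

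For (ii), since $N_1$ is finitely generated over $\AKp$ and $N_2[1/\mu]=\bigcup_{r\ge 0}\mu^{-r}N_2$, there is a smallest integer $r\ge 0$ with $f(N_1)\subseteq\mu^{-r}N_2$; it suffices to show $r=0$, so assume for contradiction $r\ge 1$. Composing $f|_{N_1}$ with the projection $\mu^{-r}N_2\twoheadrightarrow\mu^{-r}N_2/\mu^{-r+1}N_2$ yields an $\AKp$-linear, $\Gamma_K$-equivariant map $N_1\to\mu^{-r}N_2/\mu^{-r+1}N_2$ which is nonzero by minimality of $r$; as the target is killed by $\mu$, it factors through $N_1/\mu$. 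Tensoring the isomorphism of \cref{lem:Tsuji Lemma 75} by $N_2$ over $\AKp$ identifies $\mu^{-r}N_2/\mu^{-r+1}N_2\cong (N_2/\mu)(-r)$ as $\Gamma_K$-equivariant $\AKp/\mu$-modules. Thus it is enough to prove
\begin{align*}
\Hom_{\AKp/\mu}^{\Gamma_K}\bigl(N_1/\mu,\,(N_2/\mu)(-r)\bigr)=0.
\end{align*}

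To compute this Hom-group we use triviality modulo $\mu$. Put $D:=(\AKp/\mu)^{\Gamma_K}$ and $M_i:=(N_i/\mu)^{\Gamma_K}$, which are finite projective over $D$ by hypothesis, with $M_i\otimes_D\AKp/\mu\xrightarrow{\cong}N_i/\mu$ a $\Gamma_K$-equivariant isomorphism. Since $M_1$ has trivial $\Gamma_K$-action, the adjunction $\Hom_{\AKp/\mu}(M_1\otimes_D\AKp/\mu,X)\cong\Hom_D(M_1,X)$ identifies $\Gamma_K$-equivariant $\AKp/\mu$-linear maps out of $N_1/\mu$ with $D$-linear maps $M_1\to X^{\Gamma_K}$, where $X=(N_2/\mu)(-r)$. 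Now $X\cong M_2\otimes_D(\AKp/\mu)(-r)$; choosing a finite projective $D$-module $M_2'$ with $M_2\oplus M_2'\cong D^n$ (equivalently, running the argument of \cref{prop:Galois fix part}, whose proof goes through verbatim for a coefficient \emph{module} in place of an algebra) reduces the computation of $X^{\Gamma_K}$ to $\bigl((\AKp/\mu)(-r)\bigr)^{\Gamma_K}$, which vanishes by \cref{lem:Tsuji Lemma 74} because $-r\ne 0$. Hence the Hom-group is $0$, so the map $N_1\to\mu^{-r}N_2/\mu^{-r+1}N_2$ is zero, i.e.\ $f(N_1)\subseteq\mu^{-r+1}N_2$, contradicting minimality of $r$. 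Therefore $r=0$ and $f(N_1)\subseteq N_2$.

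\textbf{Main obstacle.} There is no single deep step: the content is in threading the Tate twist correctly through the identification $\mu^{-r}N_2/\mu^{-r+1}N_2\cong (N_2/\mu)(-r)$ and in using the triviality-mod-$\mu$ hypothesis for \emph{both} $N_1$ and $N_2$ to collapse the equivariant Hom-group onto a twisted invariants module that is killed by \cref{lem:Tsuji Lemma 74}. The only point needing slight care is that \cref{prop:Galois fix part} must be applied with the twisted coefficient module $(\AKp/\mu)(-r)$, which is not an algebra; this is harmless since its proof reduces to the free case via a complement and uses only the module structure and the $G$-action.
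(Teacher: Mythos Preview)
Your proof is correct and follows essentially the same approach as the paper's: reduce (i) to (ii), choose the minimal $r$ with $f(N_1)\subseteq\mu^{-r}N_2$, use \cref{lem:Tsuji Lemma 75} to identify $\mu^{-r}N_2/\mu^{-r+1}N_2\cong(N_2/\mu)(-r)$, and then combine triviality modulo $\mu$ on both sides with \cref{lem:Tsuji Lemma 74} to force a contradiction. The only cosmetic difference is that the paper picks a $\Gamma_K$-invariant basis of $N_1/\mu$ and tracks a single nonzero image in $\bigl((N_2/\mu)(-r)\bigr)^{\Gamma_K}$, whereas you phrase the same vanishing as $\Hom_{\AKp/\mu}^{\Gamma_K}\bigl(N_1/\mu,(N_2/\mu)(-r)\bigr)=0$; your handling via complements is arguably cleaner since it only uses finite projectivity rather than freeness of $(N_i/\mu)^{\Gamma_K}$.
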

\begin{proof}
The claim (1) can be deduced from (2), so let us prove (2). Assume $f(N_1)\nsubseteq N_2$. Then there exists $r>0$ such that 
\begin{align*}
     f(N_1)\subseteq \mu^{-r} N_2,\qquad f(N_1) \nsubseteq \mu^{-r+1} N_2.
\end{align*}
\cref{lem:Tsuji Lemma 75} implies $f$ induces a non-zero $\Gamma_K$-equivariant $\AKp$-linear homomorphism
\begin{align*}
     F\colon N_1/\mu  \xrightarrow{f\bmod \mu} \mu^{-r}N_2/\mu^{-r+1}N_2 \cong  N_2\otimes_{\AKp}\bigl(\mu^{-r}\AKp/\mu^{-r+1}\AKp\bigr) \xleftarrow[\cong ]{\times \mu^{-r}} N_2\otimes_{\AKp} \AKp/\mu(-r).
\end{align*}
By the triviality modulo $\mu$, $N_1/\mu$ has a $\Gamma_K$-invariant basis $e_1,\dots ,e_d$. Since $F\neq 0$, there exists some $e_i$ such that $F(e_i)\neq 0$. This shows that $N_2/\mu(-r)$ has a non-zero element fixed by $\Gamma_K$. However, we have the $\Gamma_K$-equivariant isomorphism
\begin{align*}
     (N_2/\mu)^{\Gamma_K} \otimes_{(\AKp /\mu)^{\Gamma_K}} \AKp/\mu(-r)  \xrightarrow{\cong } N_2/\mu(-r).
\end{align*}
\cref{prop:Galois fix part,lem:Tsuji Lemma 74} give us a contradiction.
\end{proof}

\subsection{The ring $\overline{B}$, the $\varphi$-finite part, and the $\varphi$-isocrystal part}
We first study the ring $\overline{B}$ defined by L. Fargues and J.-M. Fontaine and its analogs. 

\begin{lemma}\label{lem:Bbar}
     The subset 
     \begin{align*}
          \mathfrak{q}&:=\{[a]x \mid a\in \mathfrak{m}_{\mathbb{C}_p}^\flat, x\in A_\inf\}\subseteq A_\inf\\
          (\text{resp. }\mathfrak{q}_{\infty}&:=\{[a]x\mid a\in \mathfrak{m}_{\Kinf}^\flat, x\in \AKp\}\subseteq \AKp)
     \end{align*}
     is an ideal of $A_\inf$ (resp. $\AKp$).
\end{lemma}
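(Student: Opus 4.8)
The plan is to realize $\mathfrak{q}$ (resp. $\mathfrak{q}_\infty$) as a directed union of principal ideals. Write $V := \mathcal{O}_{\mathbb{C}_p}^\flat$ and $\mathfrak{m} := \mathfrak{m}_{\mathbb{C}_p}^\flat$ (resp. $V := \mathcal{O}_{\Kinf}^\flat$ and $\mathfrak{m} := \mathfrak{m}_{\Kinf}^\flat$). The one external input is that $V$ is a valuation ring — this holds because $\mathbb{C}_p$ (resp. $\Kinf$) is a perfectoid field, so $\mathcal{O}_{\mathbb{C}_p}^\flat$ (resp. $\mathcal{O}_{\Kinf}^\flat$) is the tilt of a perfectoid valuation ring — and in particular $V$ is totally ordered by divisibility with $\mathfrak{m}\neq 0$. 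By the very definition of $\mathfrak{q}$,
\[
     \mathfrak{q} = \bigcup_{a \in \mathfrak{m}} [a]\,A_\inf
\]
(and likewise $\mathfrak{q}_\infty = \bigcup_{a \in \mathfrak{m}_{\Kinf}^\flat} [a]\,\AKp$), and each $[a]A_\inf$ is an honest ideal of $A_\inf$, being principal.

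First I would check that the family $\{[a]A_\inf \mid a \in \mathfrak{m}\}$ is directed under inclusion. Given $a_1, a_2 \in \mathfrak{m}$, the valuation ring property of $V$ gives $a_1 \mid a_2$ or $a_2 \mid a_1$ in $V$; say $a_1 \mid a_2$. Since the Teichm\"{u}ller lift is multiplicative, $a_1 \mid a_2$ in $V$ forces $[a_1]\mid [a_2]$ in $A_\inf$, so with $a_3 := a_1 \in \mathfrak{m}$ we get $[a_1]A_\inf \subseteq [a_3]A_\inf$ and $[a_2]A_\inf \subseteq [a_3]A_\inf$.

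It then remains to invoke the standard fact that the union of a directed family of ideals is an ideal: such a union is closed under addition because any two of its elements already lie in a common member of the family, and it absorbs multiplication by $A_\inf$ because each member does. The very same argument, with $A_\inf$, $\mathcal{O}_{\mathbb{C}_p}^\flat$, $\mathfrak{m}_{\mathbb{C}_p}^\flat$ replaced throughout by $\AKp$, $\mathcal{O}_{\Kinf}^\flat$, $\mathfrak{m}_{\Kinf}^\flat$, proves the statement for $\mathfrak{q}_\infty$. I do not anticipate any real obstacle here: everything reduces to the divisibility dichotomy in the valuation rings $\mathcal{O}_{\mathbb{C}_p}^\flat$ and $\mathcal{O}_{\Kinf}^\flat$, the only mild point being to recall that these tilts are indeed valuation rings.
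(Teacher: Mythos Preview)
Your proof is correct and follows essentially the same approach as the paper: both arguments hinge on the valuation-ring structure of $\mathcal{O}_{\mathbb{C}_p}^\flat$ (resp.\ $\mathcal{O}_{\Kinf}^\flat$) to show that any two generators $[a],[b]$ sit inside a common principal ideal $[c]A_\inf$. The paper writes this out elementwise by choosing $c$ with $v(c)\le v(a),v(b)$ and factoring $[a]x+[b]y=[c]\bigl([a/c]x+[b/c]y\bigr)$, whereas you package the same step as ``the family $\{[a]A_\inf\}_{a\in\mathfrak{m}}$ is directed (indeed totally ordered), hence its union is an ideal''; the content is identical.
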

\begin{proof}
We check that $\mathfrak{q}$ is an ideal. Let $a,b\in \mathfrak{m}_{\mathbb{C}_p}^\flat$ and $x,y\in A_\inf$. Take an element $c\in \mathfrak{m}_{\mathbb{C}_p}^\flat\setminus \{0\}$ such that $v(c)\leq v(a)$ and $v(c)\leq v(b)$, where $v(\cdot)$ is the valuation of $\mathbb{C}_p^\flat$. Then,
\begin{align*}
     [a]x+[b]y&=[c]\cdot [a/c]x+[c]\cdot [b/c]y\\
     &=[c]\cdot\bigl([a/c]x+[b/c]y\bigr)\\
     &\in \mathfrak{q}.
\end{align*}
For any $z\in A_\inf$, $z\cdot [a]x=[a](xz)\in \mathfrak{q}$. Thus $\mathfrak{q}$ is an ideal of $A_\inf$. One can show that $\mathfrak{q}_\infty$ is an ideal of $\AKp$ by the same argument.
\end{proof}

\begin{definition}[{\cite[D\'{e}finition 1.10.14]{FarguesFontaine2018}}]\label{def:Bbar}
We define ideals $\mathfrak{p}$ and $\mathfrak{p}_\infty$ and rings $\overline{A}, \overline{B},\overline{A}_\infty$, and $\overline{B}_\infty$ as follows:
\begin{align*}
     \mathfrak{p}&:=\mathfrak{q}[1/p],&\overline{A}&:=A_\inf/\mathfrak{q},&\overline{B}&:=\overline{A}[1/p]= B_\inf/\mathfrak{p},\\
     \mathfrak{p}_\infty&:=\mathfrak{q}_\infty[1/p],&\overline{A}_\infty&:=\AKp/\mathfrak{q}_\infty,&\overline{B}_\infty&:=\overline{A}_\infty[1/p]=\BKp/\mathfrak{p}_\infty.
\end{align*}
\end{definition}

\begin{remark}\label{rem:Bbar}
     The ideal $\mathfrak{p}$ (resp. $\mathfrak{p}_\infty$, resp. $\mathfrak{q}$, resp. $\mathfrak{q}_\infty$) is a prime ideal of $B_\inf$ (resp. $\BKp$, resp. $A_\inf$, resp. $\AKp$). Indeed, Fargues and Fontaine shows in \cite[Lemme 1.10.15]{FarguesFontaine2018} that $\mathfrak{p}$ (resp. $\mathfrak{p}_\infty$) is a prime ideal of $B_\inf$ (resp. $\BKp$) and \cref{prop:inclusion of Bbar} implies that the other ideals are also prime ideals.
     \end{remark}

\begin{lemma}\label{lem:Ainf/x is p-torsion free}
Let $R$ be a perfect ring of characteristic $p>0$. Let $x\in W(R)$ be an element such that $(x \bmod p) \in R$ is a non-zero divisor. Then $W(R)/xW(R)$ is $p$-torsion free and $p$-adically complete and separated. 
\end{lemma}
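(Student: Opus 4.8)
The plan is to work with the standard description of Witt vectors over a perfect ring. Since $R$ is perfect of characteristic $p$, every element $y \in W(R)$ has a unique expansion $y = \sum_{i \geq 0} [y_i] p^i$ with $y_i \in R$, and multiplication by $p$ is injective on $W(R)$ with $W(R)/p = R$. I would first record that $x = \sum_{i\geq 0}[x_i]p^i$ and that $x_0 = (x \bmod p)$ is by hypothesis a non-zero divisor in $R$; in particular $x \notin pW(R)$.

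For $p$-torsion freeness of $W(R)/xW(R)$: suppose $py \in xW(R)$, say $py = xz$ for some $z \in W(R)$. I want to show $y \in xW(R)$. Reduce mod $p$: $0 = x_0 \bar z$ in $R$, where $\bar z = (z \bmod p)$; since $x_0$ is a non-zero divisor, $\bar z = 0$, i.e.\ $z = pz'$ for some $z' \in W(R)$ (using that $p$ is a non-zero divisor on $W(R)$ so the expansion works). Then $py = xpz' = p(xz')$, and cancelling $p$ (again injectivity of multiplication by $p$) gives $y = xz' \in xW(R)$. Hence $W(R)/xW(R)$ is $p$-torsion free.

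For $p$-adic completeness: the point is that $W(R)$ is $p$-adically complete (as $R$ is perfect, hence $p$-adically complete being of characteristic $p$ — it's a $\mathbb{Z}/p$-algebra — and $W(R) = \varprojlim_n W_n(R)$ with $W_n(R) = W(R)/p^n W(R)$). One then needs that the quotient of a $p$-adically complete module by a submodule of the form $xW(R)$ is again $p$-adically complete; this is not automatic for arbitrary submodules, but here I would use the $p$-torsion freeness just proved: since $W(R)$ is $p$-torsion free and $W(R)/xW(R)$ is $p$-torsion free, the sequence $0 \to W(R) \xrightarrow{x} W(R) \to W(R)/xW(R) \to 0$ stays exact after applying $\otimes \mathbb{Z}/p^n$ (the $\mathrm{Tor}_1$ term vanishes because $W(R)/xW(R)$ has no $p$-torsion), so $W(R)/xW(R)$ modulo $p^n$ is computed termwise; taking $\varprojlim_n$ and using that $\varprojlim$ is left exact, together with the Mittag–Leffler condition (the transition maps $W(R)/p^{n+1} \to W(R)/p^n$ are surjective, as are those for $xW(R)$), one concludes $\varprojlim_n (W(R)/xW(R))/p^n (W(R)/xW(R)) \cong (\varprojlim_n W(R)/p^n)/(\varprojlim_n xW(R)/p^n x W(R)) = W(R)/xW(R)$.

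The main obstacle I anticipate is the completeness argument, specifically justifying that $\bigcap_n (p^n W(R) + xW(R)) = xW(R)$ and that the inverse limit computation is valid — i.e.\ carefully handling the interplay of $\varprojlim$ with the short exact sequence. The $p$-torsion freeness step is the key input that makes this go through cleanly: it ensures $xW(R) \cap p^n W(R) = p^n x W(R)$, which is exactly what is needed for both the Mittag–Leffler condition and the identification of the limit. The $p$-torsion freeness itself is a short diagram chase once the unique Teichmüller expansion and injectivity of multiplication by $p$ are in hand.
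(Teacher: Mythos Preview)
Your proof is correct and follows essentially the same approach as the paper. The paper phrases the $p$-torsion freeness step via the snake lemma applied to the diagram with rows $0 \to W(R) \xrightarrow{p} W(R) \to R \to 0$ and vertical maps $\times x$, which is exactly the diagram chase you wrote out by hand; the completeness argument is identical to yours (reduce the short exact sequence mod $p^n$ using $p$-torsion freeness, then pass to the inverse limit).
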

\begin{proof}
Applying the snake lemma to the commutative diagram
\[\begin{tikzcd}
0\arrow[r]&W(R)\arrow[r, "\times p"]\arrow[d, "\times x"] &W(R)\arrow[r]\arrow[d, "\times x"] &R \arrow[r]\arrow[d, "\times x",hook]&0 \\ 
0\arrow[r]&W(R)\arrow[r, "\times p"]&W(R)\arrow[r]&R \arrow[r]&0,
\end{tikzcd}\]
we see that $W(R)/xW(R)$ is $p$-torsion free. Since $W(R)$ is $p$-torsion free and $p$-adically separated and $(x \bmod p)\in R$ is a non-zero divisor, the sequence
\begin{align*}
     0 \rightarrow W(R) \xrightarrow{\times x} W(R)  \rightarrow W(R)/xW(R)  \rightarrow 0 
\end{align*}
is exact. As we have already shown that $W(R)/xW(R)$ is $p$-torsion free, this sequence remains exact after modulo $p^n$. By taking inverse limit, we get an exact sequence
\begin{align*}
     0 \rightarrow W(R)  \xrightarrow{\times x} W(R)  \rightarrow \bigl(W(R)/xW(R)\bigr)^{\wedge}_p  \rightarrow 0.
\end{align*}
Thus $W(R)/xW(R)$ is $p$-adically complete and separated.
\end{proof}

\begin{proposition}\label{prop:property of Bbar}
The ring $\overline{A}$ (resp. $\overline{A}_\infty$) is a $p$-torsion free local ring with maximal ideal $(p)$ which has a bijective Frobenius endomorphism induced by the Frobenius endomorphism of $A_\inf$ (resp. $\AKp$).
\end{proposition}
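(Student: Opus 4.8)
The plan is to establish the four assertions — $p$-torsion freeness, locality, the identification of the maximal ideal, and bijectivity of Frobenius — for $\overline{A}$; the claims for $\overline{A}_\infty$ then follow by the identical argument, replacing $(\mathcal{O}_{\mathbb{C}_p}^\flat,\overline{k},\mathfrak{q})$ by $(\mathcal{O}_{\Kinf}^\flat,k_\infty,\mathfrak{q}_\infty)$ and using that $\mathcal{O}_{\Kinf}^\flat$ is, like $\mathcal{O}_{\mathbb{C}_p}^\flat$, a perfect valuation ring of characteristic $p$ (as $\Kinf$ is a perfectoid field). I would begin by computing the reduction mod $p$: since $A_\inf/p=\mathcal{O}_{\mathbb{C}_p}^\flat$ and $[a]$ reduces to $a$, the ideal $\mathfrak{q}$ maps onto $\mathfrak{m}_{\mathbb{C}_p}^\flat$ under $A_\inf\twoheadrightarrow A_\inf/p$, so $\overline{A}/p\overline{A}=A_\inf/(\mathfrak{q}+pA_\inf)\cong\mathcal{O}_{\mathbb{C}_p}^\flat/\mathfrak{m}_{\mathbb{C}_p}^\flat=\overline{k}$, a field; in particular $\mathfrak{q}$ is a proper ideal. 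The same computation gives $\mathfrak{q}+pA_\inf=W(\mathfrak{m}_{\mathbb{C}_p}^\flat)+pA_\inf$ (both contain $pA_\inf$ and have image $\mathfrak{m}_{\mathbb{C}_p}^\flat$ in $A_\inf/p$), so the image of $(p,W(\mathfrak{m}_{\mathbb{C}_p}^\flat))$ in $\overline{A}$ is precisely $p\overline{A}$; this is the maximal ideal denoted $(p,\mathfrak{m}_{\mathbb{C}_p}^\flat)$ in the statement (note also $[\mathfrak{m}_{\mathbb{C}_p}^\flat]\subseteq\mathfrak{q}$, so the Teichm\"{u}ller image of $\mathfrak{m}_{\mathbb{C}_p}^\flat$ itself is $0$ in $\overline{A}$).

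For locality, the key point I would use is that $A_\inf=W(\mathcal{O}_{\mathbb{C}_p}^\flat)$ is itself a local ring. Indeed $A_\inf$ is $p$-torsion free and $p$-adically complete (as $\mathcal{O}_{\mathbb{C}_p}^\flat$ is perfect), and $A_\inf/p=\mathcal{O}_{\mathbb{C}_p}^\flat$ is a valuation ring whose maximal ideal is $\mathfrak{m}_{\mathbb{C}_p}^\flat=\Ker(\mathcal{O}_{\mathbb{C}_p}^\flat\twoheadrightarrow\overline{k})$; hence any $x\in A_\inf$ not lying in $(p,W(\mathfrak{m}_{\mathbb{C}_p}^\flat))$ reduces to a unit of $\mathcal{O}_{\mathbb{C}_p}^\flat$ modulo $p$, and is therefore a unit of $A_\inf$ by $p$-adic completeness, so $(p,W(\mathfrak{m}_{\mathbb{C}_p}^\flat))$ is the unique maximal ideal of $A_\inf$. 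Since a quotient of a local ring by a proper ideal is again local with maximal ideal the image of the original one, and $\mathfrak{q}$ is proper by the previous paragraph, $\overline{A}=A_\inf/\mathfrak{q}$ is local with maximal ideal $p\overline{A}$.

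For $p$-torsion freeness I would not apply \cref{lem:Ainf/x is p-torsion free} to $\mathfrak{q}$ directly (it is not even finitely generated) but route through the principal subideals $[a]A_\inf$: if $py\in\mathfrak{q}$, write $py=[a]x$ with $a\in\mathfrak{m}_{\mathbb{C}_p}^\flat$, $x\in A_\inf$, where we may take $a\neq 0$ (if $py=0$ then $y=0$, $A_\inf$ being a domain). As $a$ is a non-zero divisor in the domain $\mathcal{O}_{\mathbb{C}_p}^\flat=A_\inf/p$, \cref{lem:Ainf/x is p-torsion free} applies with $R=\mathcal{O}_{\mathbb{C}_p}^\flat$ and $x=[a]$, giving that $A_\inf/[a]A_\inf$ is $p$-torsion free; hence $y\in[a]A_\inf\subseteq\mathfrak{q}$. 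Finally, $\varphi\colon A_\inf\to A_\inf$ is bijective because $\mathcal{O}_{\mathbb{C}_p}^\flat$ is perfect, and $\varphi(\mathfrak{q})=\mathfrak{q}$ since $\varphi([a]x)=[a^p]\varphi(x)$ and the maps $a\mapsto a^p$ on $\mathfrak{m}_{\mathbb{C}_p}^\flat$ and $\varphi$ on $A_\inf$ are bijective; so $\varphi$ descends to a bijective ring endomorphism of $\overline{A}$.

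I do not expect a genuine obstacle: the proposition is a string of routine verifications. The only two points that call for a little care are (a) handling $p$-torsion freeness through the principal ideals $[a]A_\inf$ rather than the non-finitely-generated $\mathfrak{q}$, and (b) recognizing that the quickest route to locality is the observation that $A_\inf$ and $\AKp$ are themselves local rings (their reductions mod $p$ being valuation rings and they being $p$-adically complete), whence locality of $\overline{A}$ and $\overline{A}_\infty$ is inherited for free.
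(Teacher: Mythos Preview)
Your proof is correct and follows essentially the same approach as the paper: both derive locality from that of $A_\inf$, both handle Frobenius by checking $\varphi$ and $\varphi^{-1}$ preserve $\mathfrak{q}$, and both reduce $p$-torsion freeness to \cref{lem:Ainf/x is p-torsion free} applied to the principal ideals $[a]A_\inf$. The only cosmetic difference is that the paper packages the last step as an isomorphism $\overline{A}\cong\varinjlim_{a\in\mathfrak{m}_{\mathbb{C}_p}^\flat}A_\inf/[a]A_\inf$ (a filtered colimit of $p$-torsion free rings), whereas you chase an element $py=[a]x$ directly; your additional remark that the maximal ideal of $\overline{A}$ is simply $p\overline{A}$ is a nice clarification not made explicit in the paper.
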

\begin{proof}
     We prove the assertion for $\overline{A}$ and the same argument can be applied to $\overline{A}_\infty$. Since $A_\inf$ is a local ring with maximal ideal $(p, [\mathfrak{m}_{\mathbb{C}_p}^\flat])$ (cf. \cref{lem:Ainf is a local ring}), $\overline{A}$ is also a local ring with maximal ideal $(p)$. Since $\varphi([a]x)=[a]^p \varphi(x)\in \mathfrak{q}$ for any $a\in \mathfrak{m}_{\mathbb{C}_p}^\flat$ and $x\in A_\inf$, $\overline{A}$ has a Frobenius endomorphism induced by the Frobenius endomorphism of $A_\inf$. As $\varphi\colon A_\inf  \rightarrow A_\inf $ is bijective, the Frobenius endomorphism $\varphi\colon \overline{A} \rightarrow \overline{A}$ is surjective. For any  $a\in \mathfrak{m}_{\mathbb{C}_p}^\flat$ and $x\in A_\inf$, we have
     \begin{align*}
          \varphi^{-1}([a]x)=[a^{1/p}]\varphi^{-1}(x)\in \mathfrak{q},
     \end{align*}
     so $\varphi\colon \overline{A} \rightarrow \overline{A}$ is injective. It remains to show that $\overline{A}$ is $p$-torsion free. Consider the exact sequence $
          0 \rightarrow [a]A_\inf  \rightarrow A_\inf  \rightarrow A_\inf/[a]A_\inf  \rightarrow 0$
     for any $a\in \mathfrak{m}_{\mathbb{C}_p}^\flat$. By taking inductive limit, we get the exact sequence
     \begin{align*}
          0 \rightarrow \mathfrak{q} \rightarrow A_\inf  \rightarrow  \varinjlim_{a\in \mathfrak{m}_{\mathbb{C}_p}^\flat}A_\inf/[a]A_\inf  \rightarrow 0,
     \end{align*}
     hence we have an isomorphism
     \begin{align*}
          A_\inf/\mathfrak{q}\cong  \varinjlim_{a\in \mathfrak{m}_{\mathbb{C}_p}^\flat}A_\inf/[a]A_\inf.
     \end{align*}
     Since $A_\inf/[a]A_\inf$ is $p$-torsion free by \cref{lem:Ainf/x is p-torsion free}, $A_\inf/\mathfrak{q}$ is also $p$-torsion free by the above isomorphism.
\end{proof}

\begin{lemma}\label{lem:Abar is mu-torsion free}
The ring $\overline{A}$ (resp. $\overline{A}_\infty$) is $\mu$-torsion free.
\end{lemma}
\begin{proof}
We treat only the case of $\overline{A}$ since the case of $\overline{A}_\infty$ can be shown by a similar argument (or we can use \cref{prop:inclusion of Bbar}). By the isomorphism $\overline{A}\cong \varinjlim_{a\in \mathfrak{m}_{\mathbb{C}_p}^\flat} A_\inf/[a]A_\inf$ proved in the proof of \cref{prop:property of Bbar}, it is enough to show that $A_\inf/[a]A_\inf$ is $\mu$-torsion free for any $a\in \mathfrak{m}_{\mathbb{C}_p}^\flat\setminus \{0\}$. By applying the snake lemma to the commutative diagram
\[\begin{tikzcd}
0\arrow[r]&A_\inf\arrow[r,"\times \mu"]\arrow[d,"{\times [a]}"]&A_\inf\arrow[r]\arrow[d,"{\times [a]}"]&A_\inf/\mu\arrow[r]\arrow[d,"{\times [a]}"]&0 \\ 
0\arrow[r]&A_\inf\arrow[r,"\times \mu"]&A_\inf\arrow[r]&A_\inf/\mu\arrow[r]&0,
\end{tikzcd}\]
it is enough to show that $A_\inf/\mu$ is $[a]$-torsion free. By \cite[5.1.4. Lemme]{Fontaine1994}, we have an injective homomorphism
\begin{align*}
    A_\inf/\mu \hookrightarrow \prod_{i\geq 0} \mathcal{O}_{\mathbb{C}_p},\qquad x\mapsto (\theta \circ \varphi^i(x))_{i\geq 0}.
\end{align*}
The image of $[a]$ under this map is $((a^{(0)})^{p^i})_{i\geq 0}$. Since each component is not zero, we see that $A_\inf/\mu$ is $[a]$-torsion free, as desired.
\end{proof}

\begin{proposition}\label{prop:inclusion of Bbar}
We have the following canonical injective homomorphisms:
\[\begin{tikzcd}
\overline{A}_\infty\arrow[r, hook]\arrow[d, hook] &\overline{A}\arrow[d, hook]\\ 
\overline{B}_\infty\arrow[r, hook]&\overline{B}.
\end{tikzcd}\]
\end{proposition}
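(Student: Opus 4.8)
The plan is to realise all four arrows as quotients of a single inclusion and then check injectivity one arrow at a time. The inclusion $\mathcal{O}_{\Kinf}^\flat \hookrightarrow \mathcal{O}_{\mathbb{C}_p}^\flat$ induces $\AKp \hookrightarrow A_\inf$, and since $\mathfrak{m}_{\Kinf}^\flat \subseteq \mathfrak{m}_{\mathbb{C}_p}^\flat$ we get $\mathfrak{q}_\infty \subseteq \mathfrak{q}$, hence (after inverting $p$ as well) ring homomorphisms $\overline{A}_\infty \to \overline{A}$ and $\overline{B}_\infty \to \overline{B}$; together with the localisation maps $\overline{A}_\infty \to \overline{B}_\infty$ and $\overline{A}\to\overline{B}$ these visibly commute. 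The two localisation maps are injective because $\overline{A}_\infty$ and $\overline{A}$ are $p$-torsion free by \cref{prop:property of Bbar}, and once $\overline{A}_\infty \to \overline{A}$ is known injective, inverting $p$ (an exact operation) yields injectivity of $\overline{B}_\infty\to\overline{B}$. So the whole proposition reduces to showing $\overline{A}_\infty \to \overline{A}$ is injective, i.e. that $\mathfrak{q}\cap\AKp = \mathfrak{q}_\infty$ inside $A_\inf$.

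To get this identity I would argue as follows. Take $y \in \mathfrak{q}\cap\AKp$, so $y = [b]x$ with $b\in\mathfrak{m}_{\mathbb{C}_p}^\flat\setminus\{0\}$ and $x\in A_\inf$. Since the value group of $\Kinf^\flat$ is dense in $\mathbb{R}_{\ge 0}$ — because $K_\infty$ contains all $\zeta_{p^n}$, so $v_p(\zeta_{p^n}-1)\to 0$ — I can choose $a\in\mathfrak{m}_{\Kinf}^\flat\setminus\{0\}$ with $v(a)\le v(b)$; then $[a]$ divides $[b]$ in $A_\inf$, so $y\in[a]A_\inf\cap\AKp$. Thus it suffices to prove the key claim $[a]A_\inf\cap\AKp = [a]\AKp$ for every $a\in\mathfrak{m}_{\Kinf}^\flat\setminus\{0\}$, since then $y\in[a]\AKp\subseteq\mathfrak{q}_\infty$ (the reverse inclusion $\mathfrak{q}_\infty\subseteq\mathfrak{q}\cap\AKp$ being obvious).

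I expect this key claim to be the main obstacle: the point is that $A_\inf$ is not obviously flat over $\AKp$, so one cannot intersect ideals formally, and instead I would argue by $p$-adic successive approximation. Given $x\in[a]A_\inf\cap\AKp$, reduce mod $p$: $\bar x$ lies in $a\mathcal{O}_{\mathbb{C}_p}^\flat\cap\mathcal{O}_{\Kinf}^\flat$, which equals $a\mathcal{O}_{\Kinf}^\flat$ because these are valuation rings with $\Kinf^\flat$ a subfield of $\mathbb{C}_p^\flat$ (divide by $a$ and note the quotient has non-negative valuation and lies in $\Kinf^\flat$). Lift $\bar x/a$ to $z_0\in\AKp$; then $x-[a]z_0\in pA_\inf\cap\AKp = p\AKp$ (the equality since $\AKp/p\hookrightarrow A_\inf/p$), say $x-[a]z_0 = px_1$. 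As $px_1\in[a]A_\inf$ and $A_\inf/[a]A_\inf$ is $p$-torsion free by \cref{lem:Ainf/x is p-torsion free}, we get $x_1\in[a]A_\inf\cap\AKp$ and the process repeats. Since $\AKp$ is $p$-adically complete, $z:=\sum_{i\ge 0}p^iz_i$ converges there, and the partial identities $x = [a]\sum_{i<n}p^iz_i + p^nx_n$ pass to the limit to give $x=[a]z\in[a]\AKp$. This finishes the argument; the only delicate inputs are the valuation-ring computation and the two $p$-torsion-freeness statements of \cref{lem:Ainf/x is p-torsion free} and \cref{prop:property of Bbar}.
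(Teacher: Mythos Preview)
Your argument is correct and follows the same reduction as the paper: first to showing $\mathfrak{q}\cap\AKp=\mathfrak{q}_\infty$, and then to the key claim $[a]A_\inf\cap\AKp=[a]\AKp$ for $a\in\mathfrak{m}_{\Kinf}^\flat\setminus\{0\}$. The only difference is that the paper dispatches the key claim in one step using the explicit Witt-coordinate identity $[a](y_0,y_1,y_2,\dots)=(ay_0,a^py_1,a^{p^2}y_2,\dots)$ (so each component $a^{p^i}y_i\in\mathcal{O}_{\Kinf}^\flat$ forces $y_i\in\mathcal{O}_{\Kinf}^\flat$), which is quicker than your $p$-adic successive approximation, though your approach works just as well.
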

\begin{proof}
The injectivities of $\overline{A}\hookrightarrow \overline{B}$ and $\overline{A}_\infty \hookrightarrow \overline{B}_\infty$ follow from \cref{prop:property of Bbar}. Since localization is exact, it is enough to show that the canonical map $\overline{A}_\infty \rightarrow \overline{A}$ is injective. This is equivalent to showing that $\mathfrak{q}_\infty=\mathfrak{q}\cap \AKp$. The inclusion $\mathfrak{q}_\infty \subseteq \mathfrak{q}\cap \AKp$ is obvious. Let $a\in \mathfrak{m}_{\mathbb{C}_p}^\flat$ and $x\in A_\inf$ be elements such that $[a]x\in \AKp$. Take an element $b\in \mathfrak{m}_{\Kinf}^\flat\setminus \{0\}$ such that $v(b)\leq v(a)$. Then we have $[a]x=[b]\cdot [a/b]x$.
Put $[a/b]x=(y_0,y_1,y_2,\dots )$ in $A_\inf$ with $y_i\in \mathcal{O}_{\mathbb{C}_p}^\flat$. Then a simple calculation shows
\begin{align*}
     [a]x=(by_0,b^py_1,b^{p^2}y_2,\dots )
\end{align*}
in $A_\inf$. Since this element lies in $\AKp=W(\mathcal{O}_{\Kinf}^\flat)$ and $b\in \mathcal{O}_{\Kinf}^\flat\setminus \{0\}$, $y_i$ is contained in $\mathcal{O}_{\mathbb{C}_p}^\flat \cap \Kinf^\flat=\mathcal{O}_{\Kinf}^\flat$ for any $i\geq 0$. Thus $[a/b] x\in \AKp$ and $[a]x\in \mathfrak{q}_\infty$.
\end{proof}

\begin{proposition}\label{prop:Abar and Acrys}
Let $A_{\crys,\infty}$ be the $p$-adic completion of the subring $\AKp[\xi^n/n!\mid n\in \mathbb{Z}_{\geq 0}]\subseteq  \AKp[1/p]$. Then there exist canonical surjections
\begin{align*}
     A_{\crys, \infty} \twoheadrightarrow \overline{A}_\infty,\qquad A_\crys \twoheadrightarrow \overline{A}.
\end{align*}
\end{proposition}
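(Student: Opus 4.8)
The plan is to show that the evident surjective quotient map $A_\inf \twoheadrightarrow \overline{A}=A_\inf/\mathfrak{q}$ (and likewise $\AKp\twoheadrightarrow\overline{A}_\infty$) extends to the $p$-completed divided power envelope $A_\crys$; once an $A_\inf$-algebra map $A_\crys\to\overline{A}$ restricting to it is produced, it is automatically surjective, since already $A_\inf\twoheadrightarrow\overline{A}$ is. So the only thing to check is that $\overline{A}$ (resp.\ $\overline{A}_\infty$) can be equipped with compatible divided powers of the image $\bar\xi$ of $\xi$.

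The crucial input is a valuation computation. Writing $\xi=1+[\epsilon^{1/p}]+\cdots+[\epsilon^{(p-1)/p}]$ and reducing modulo $p$ (identifying $A_\inf/p=\mathcal{O}_{\mathbb{C}_p}^\flat$), one gets $\xi\bmod p=1+\epsilon^{1/p}+\cdots+\epsilon^{(p-1)/p}=(\epsilon-1)/(\epsilon^{1/p}-1)\in\mathcal{O}_{\mathbb{C}_p}^\flat$, an element of valuation $v(\epsilon-1)-v(\epsilon^{1/p}-1)=\tfrac{p}{p-1}-\tfrac{1}{p-1}=1>0$. Hence $\xi\bmod p\in\mathfrak{m}_{\mathbb{C}_p}^\flat$, so its Teichm\"{u}ller lift lies in $\mathfrak{q}$ and therefore $\xi\in pA_\inf+\mathfrak{q}$; since $\epsilon$ and $\epsilon^{1/p}$ already belong to $\mathcal{O}_{\Kinf}^\flat$ with the same valuations, the same computation yields $\xi\in p\AKp+\mathfrak{q}_\infty$. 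It is important that one obtains $\xi\in pA_\inf+\mathfrak{q}$ and \emph{not} $\xi\in\mathfrak{q}$ --- the latter would be false, being incompatible with the $p$-torsion-freeness of $\overline{A}$ (\cref{prop:property of Bbar}).

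Granting this, $\bar\xi\in p\overline{A}$, say $\bar\xi=p\bar w$. Since $\overline{A}$ is $p$-torsion-free and $\tfrac{p^n}{n!}\in\mathbb{Z}_p$, the elements $\gamma_n(\bar\xi):=\tfrac{p^n}{n!}\bar w^{\,n}\in\overline{A}$ equip $\bar\xi\overline{A}$ with a PD-structure; concretely, the composite $A_\inf[\xi^n/n!\mid n\ge 0]\hookrightarrow A_\inf[1/p]\twoheadrightarrow A_\inf[1/p]/\mathfrak{p}=\overline{B}=\overline{A}[1/p]$ carries $\xi^n/n!$ to $\bar\xi^{\,n}/n!=\tfrac{p^n}{n!}\bar w^{\,n}\in\overline{A}$, so it has image in $\overline{A}$ and gives a surjection of $A_\inf$-algebras $A_\inf[\xi^n/n!]\twoheadrightarrow\overline{A}$ (and similarly $\AKp[\xi^n/n!]\twoheadrightarrow\overline{A}_\infty$). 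Passing to $p$-adic completions then produces the desired surjections $A_\crys\twoheadrightarrow\overline{A}$ and $A_{\crys,\infty}\twoheadrightarrow\overline{A}_\infty$.

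I expect the main obstacle to be the valuation computation of the second paragraph, and in particular the realisation that one needs exactly the membership $\xi\in pA_\inf+\mathfrak{q}$ (rather than the stronger $\xi\in\mathfrak{q}$) in order for $\overline{A}$ to absorb the divided powers of $\xi$. A secondary point demanding care is the last passage to $p$-adic completions: because $\mathfrak{q}$ need not be $p$-adically closed in $A_\inf$, the ring $\overline{A}$ itself need not be $p$-adically separated, so one should identify the pertinent quotient of $A_\crys$ with $\overline{A}$ by hand, making use of the presentation $\mathfrak{q}=\bigcup_{a\in\mathfrak{m}_{\mathbb{C}_p}^\flat}[a]A_\inf$ and the $p$-completeness of the rings $A_\inf/[a]A_\inf$ from \cref{lem:Ainf/x is p-torsion free}.
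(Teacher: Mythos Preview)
Your argument is correct and follows essentially the same strategy as the paper: both hinge on the observation that $\xi$ becomes $p$ times an element of the target ring once one kills a suitable Teichm\"uller lift, so that the divided powers $\xi^n/n!$ map to $(p^n/n!)\cdot(\text{element})\in\overline{A}$. The only differences are presentational. The paper obtains the decomposition by first replacing $\xi$ with an alternate generator $\xi_p=pu-[\tau]$ of $\Ker\theta$ (with $v(\tau)=1$ and $u$ a unit, via \cite[Lemma~3.9]{BhattMorrowScholze}) and then factors the map through the $p$-adically complete ring $\AKp/[\tau]$ \emph{before} passing to $\overline{A}_\infty$; this sidesteps the completion subtlety you flag in your final paragraph. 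You instead compute directly that $\xi\bmod p\in\mathfrak{m}_{\mathbb{C}_p}^\flat$ has valuation $1$, which is perhaps more transparent, but then must circle back to the intermediate quotients $A_\inf/[a]$ to handle the $p$-completion step---exactly as you anticipate. Either route works; the paper's ordering avoids having to revisit the completion issue after the fact.
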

\begin{proof}
We only treat the former case. The latter can be shown by a similar argument. Recall the generator $\xi_p=pu-[\tau]$ of $\Ker \theta$ defined in the proof of \cref{lem:Ainf is a local ring}. We have $\AKp[\xi^n/n!\mid n\in \mathbb{Z}_{\geq 0}]=\AKp[\xi_p^n/n!\mid n\in \mathbb{Z}_{\geq 0}]$ in $\AKp[1/p]$. The image of $\xi_p$ under the surjection $\AKp \twoheadrightarrow \AKp/[\tau]$ is the same as the image of $pu$. As $\AKp/[\tau]$ is $p$-torsion free by \cref{lem:Ainf/x is p-torsion free}, we can construct a canonical surjection $\AKp[\xi_p^n/n!\mid n\in \mathbb{Z}_{\geq 0}] \twoheadrightarrow  \AKp/[\tau]$. Since $\AKp/[\tau]$ is $p$-adically complete and separated by \cref{lem:Ainf/x is p-torsion free}, we obtain a surjective map $A_{\crys,\infty} \twoheadrightarrow \AKp/[\tau]$. Hence we get the surjection $A_{\crys,\infty} \twoheadrightarrow \overline{A}_\infty$, as desired.
\end{proof}

% \begin{definition}\label{def:Abar and Acrys}
% Let $\mathfrak{q}_{\crys,\infty}$ (resp. $\mathfrak{q}_\crys$) denote the kernel of the map $A_{\crys,\infty}\twoheadrightarrow \overline{A}_\infty$ (resp. $A_\crys \twoheadrightarrow \overline{A}$). We also define $\mathfrak{p}_{\crys,\infty}:=\mathfrak{q}_{\crys,\infty}[1/p], \mathfrak{p}_\crys:=\mathfrak{q}_{\crys}[1/p]$.
% \end{definition}

The following definition is due to H. Du (\cite[Lemma 4.11]{Du2022}) and it can be seen as a Frobenius analog of $K$-finite in the sense of \cite{Sen1980}. See also \cite[Remark 2.6.5]{BFYSB2022}.

\begin{definition}[$\varphi$-finite part]\label{def:phi-finite part}
Let $l$ be a perfect field of characteristic $p$. We put $L_0:=W(l)[1/p]$.
Let $M$ be a $p$-torsion free $W(l)$-module equipped with a semi-linear morphism $\varphi\colon M \rightarrow M[1/p]$ with respect to the Frobenius endomorphism on $W(l)$.
\begin{enumerate}
     \item For any $x\in M$, we define $V^\varphi_{L_0}(x)$ to be the $L_0$-vector subspace of $M[1/p]$ generated by elements $\varphi^n(x)$ for all $n\geq 0$. If there is no risk of confusion, we abbreviate $V^\varphi_{L_0}(x)$ as $V^\varphi(x)$.
     \item We define the \emph{$\varphi$-finite part of $M$} by
\begin{align*}
     M_{\fin,W(l)}:=\{x\in M\mid \dim_{L_0}V^\varphi_{L_0}(x) <\infty\}.
\end{align*}
     If there is no risk of confusion, we abbreviate $M_{\fin,W(l)}$ as $M_{\fin}$.
\end{enumerate}
\end{definition}

\begin{lemma}\label{lem:another condition of phi-finite part}
Let $l$ be a perfect field of characteristic $p$. We put $L_0:=W(l)[1/p]$.
Let $M$ be a $p$-torsion free $W(l)$-module equipped with a semi-linear morphism $\varphi\colon M \rightarrow M[1/p]$ with respect to the Frobenius endomorphism on $W(l)$. Let $x\in M$ be an arbitrary element. Then, $x$ belongs to $M_\fin$ if and only if $x$ is contained in a finite dimensional $L_0$-vector subspace of $M[1/p]$ stable under $\varphi$. 
\end{lemma}
\begin{proof}
Assume $x\in M_\fin$. Then, $V^\varphi(x)$ is a finite dimensional $L_0$-vector subspace of $M[1/p]$ stable under $\varphi$ and $x\in V^\varphi(x)$. Conversely, if $x\in V$ for some finite dimensional $L_0$-vector subspace $V$ of $M[1/p]$ stable under $\varphi$, then $V^\varphi(x)$ is a subspace of $V$, hence it is finite dimensional.
\end{proof}

\begin{lemma}\label{lem:phi-finite part}
Let $l$ be a perfect field of characteristic $p$.
Let $M$ be a $p$-torsion free $W(l)$-module equipped with a semi-linear morphism $\varphi\colon M \rightarrow M[1/p]$ with respect to the Frobenius endomorphism on $W(l)$. The following hold.
\begin{enumerate}
     \item $M_\fin$ is a $W(l)$-submodule of $M$. 
     \item Let $N$ be a $W(l)$-submodule of $M$ satisfying $\varphi(N)\subseteq N[1/p]$. Then, we have $N_\fin= M_\fin \cap N$. In particular, $M_\fin=(M[1/p])_\fin\cap M$.
     \item We have $(M[1/p])_\fin=M_\fin[1/p]$ and $\varphi(M_\fin)\subseteq (M[1/p])_\fin$.
\end{enumerate}
\end{lemma}
\begin{proof}
     (1) We use \cref{lem:another condition of phi-finite part}. 
     Obviously $0\in M_\fin$. Take arbitrary elements $x\in M_\fin$ and $a\in W(l)$. Since $ax\in V^\varphi(x)$, $ax\in M_\fin$.
     Let $x'\in M_\fin$. Since $x+x'\in V^\varphi(x)+V^\varphi(x')$, we see that $x+x'\in M_\fin$. Therefore, $M_\fin$ is a $W(l)$-submodule of $M$. 
     
     (2) Let $x\in N$ be an arbitrary element. Then, $\varphi^n(x)\in N[1/p]$ for every $n\in \mathbb{Z}_{\geq 0}$. Since $N[1/p]$ is an $L_0$-vector subspace of $M[1/p]$, the $L_0$-vector subspace of $N[1/p]$ generated by $\varphi^n(x)$ for all $n\in \mathbb{Z}_{\geq 0}$ is the same as that of $M[1/p]$ generated by $\varphi^n(x)$ for all $n\in \mathbb{Z}_{\geq 0}$. This implies the assertion.

     (3) By (1) and (2), we see that $(M[1/p])_\fin \subseteq M_\fin[1/p]$. The other inclusion follows from the definition. Since $\varphi(x)\in V^\varphi(x)\subseteq M[1/p]$ for any $x\in M$, we see that $\varphi(M_\fin)\subseteq (M[1/p])_\fin$.
\end{proof}

\begin{corollary}\label{cor:phi-finite part and G-action}
     Let $l$ be a perfect field of characteristic $p$.
     Let $M$ be a $p$-torsion free $W(l)$-module equipped with a semi-linear morphism $\varphi\colon M \rightarrow M[1/p]$ with respect to the Frobenius endomorphism on $W(l)$. Let $G$ be a group. Assume that $W(l)$ has an action of $G$ and that $M$ has a semi-linear action of $G$ that commutes with $\varphi$.
     \begin{enumerate}
          \item The $W(l)$-submodule $M_\fin$ is stable under the $G$-action.
          \item If $G$ acts on $W(l)$ trivially and hence the $G$-action on $M$ is $W(l)$-linear, then we have an equality $(M_\fin)^G=(M^G)_\fin$, which we abbreviate to $M_\fin^G$.
     \end{enumerate}
\end{corollary}
\begin{proof}
Since the $G$-action on $M$ is semi-linear and commutes with $\varphi$, we have $g(V^\varphi(x))= V^\varphi(g(x))$ for any $x\in M $ and $g\in G$. Thus, $G$ acts on $M_\fin$. If $G$ acts on $W(l)$ trivially, $M^G$ is a $W(l)$-submodule of $M$. We have $\varphi(M^G)\subseteq (M[1/p])^G=M^G[1/p]$ and $(M_\fin)^G=M_\fin \cap M^G$, therefore (2) follows from \cref{lem:phi-finite part} (2).
\end{proof}

\begin{lemma}\label{lem:phi finite part as W(k) and W(l)}
Let $l'/l$ be an extension of perfect fields of characteristic $p$. Let $M$ be a $p$-torsion free $W(l')$-module equipped with a semi-linear morphism $\varphi\colon M \rightarrow M[1/p]$ with respect to the Frobenius endomorphism on $W(l')$. Then, we have $M_{\fin, W(l)}\subseteq M_{\fin, W(l')}$.
\end{lemma}
\begin{proof}
     An element $x\in M$ is contained in $M_{\fin, W(l)}$ if and only if there exist $n\in \mathbb{Z}_{\geq 0}$ and $a_0,\dots ,a_n\in W(l)[1/p]$ such that $\varphi^{n+1}(x)=\sum_{i=0}^n a_i \varphi^i(x)$. The assertion follows from this characterization. We give another proof. We set $L_0:= W(l)[1/p]$ and $L_0':= W(l')[1/p]$. For any $x\in M$, we have a canonical $L_0'$-linear map $V^{\varphi}_{L_0}(x)\otimes_{L_0} L_0' \rightarrow V^{\varphi}_{L_0'}(x)$. It is surjective since $L_0'$-vector space $V^{\varphi}_{L_0'}(x)$ is generated by elements $x, \varphi(x), \varphi^2(x),\dots$. Therefore, if $V^{\varphi}_{L_0}(x)$ is a finite dimensional $L_0$-vector space, then $V^{\varphi}_{L_0'}(x)$ is a finite dimensional $L_0'$-vector space.
\end{proof}

\begin{proposition}\label{prop:phi-finite part and multiplication}
Let $l$ be a perfect field of characteristic $p$.
Let $R$ be a $p$-torsion free $W(l)$-algebra equipped with a ring homomorphism $\varphi\colon R \rightarrow R[1/p]$ that is compatible with $\varphi\colon W(l)\rightarrow W(l)$. Then, for any $x,y\in R_\fin$, $xy\in R_\fin$.
\end{proposition}
\begin{proof}
We put $L_0:=W(l)[1/p]$.
Take arbitrary elements $x,y\in R_\fin$. We set $V:=\Ima (V^{\varphi}(x)\otimes_{L_0} V^{\varphi}(y)\rightarrow R[1/p])$, where the map is an $L_0$-linear map induced by the multiplication of $R[1/p]$. Then, $V$ is a finite dimensional $L_0$-vector space stable under $\varphi$ and containing $xy$. Therefore, $xy\in R_\fin$ by \cref{lem:another condition of phi-finite part}.
\end{proof}

The following proposition is a Frobenius analog of \cref{prop:Galois fix part}.

\begin{proposition}\label{prop:phi-finite part and tensor product}
Let $l$ be a perfect field of characteristic $p$ and we set $L_0:=W(l)[1/p]$.
Let $R$ be a $p$-torsion free $W(l)$-algebra and let $M$ be a finite free $W(l)$-module of rank $r$ equipped with $L_0$-linear homomorphisms
\begin{align*}
     \varPhi_M&\colon M\otimes_{W(l),\varphi}L_0  \xrightarrow{\cong } M[1/p],\\
     \varPhi_R&\colon R\otimes_{W(l),\varphi}L_0  \rightarrow R[1/p]. 
\end{align*}
Then, $(M\otimes_{W(l)}R)_\fin =M \otimes_{W(l)} R_\fin$ in $M\otimes_{W(l)}R$.
\end{proposition}
\begin{proof}
We first show the inclusion $(M\otimes_{W(l)}R)_\fin \supseteq M \otimes_{W(l)} R_\fin$. By \cref{lem:phi-finite part} (1), it is enough to show that $m \otimes r\in (M \otimes_{W(l)}R)_\fin$ for any $m\in M$ and $r\in R_\fin$.  
Obviously, $m \otimes r\in M[1/p] \otimes_{L_0} V^\varphi(r)$ and we see that $M[1/p] \otimes_{L_0} V^\varphi(r)$ is a finite dimensional $L_0$-vector subspace of $M[1/p] \otimes_{L_0} R[1/p]$ stable under $\varphi_M \otimes \varphi_R$. Hence, $m \otimes r\in (M \otimes_{W(l)}R)_\fin$ by \cref{lem:another condition of phi-finite part}.

We prove the other inclusion. Let $e_1,\dots ,e_r\in M$ be a basis of $M$ over $W(l)$.
Take an arbitrary element $x=\sum_{i=1}^r e_i \otimes r_i\in (M\otimes_{W(l)}R)_\fin$ with $r_i\in R$ for all $1\leq i\leq r$. Our goal is to show that $r_i\in R_\fin$ for all $1\leq i\leq r$. Define $A\in GL_r(L_0)$ by 
\begin{align*}
     \varphi_M(e_1\dots e_r)=(e_1\dots e_r)A.
\end{align*}
Let $f_1:=x,f_2,\dots ,f_d\in M\otimes_{W(l)}R[1/p]$ be a basis of $V^\varphi(x)$. Define $X=(x_{ij})_{i,j}\in M_{r,d}(R[1/p])$ and $B\in M_d(L_0)$ by 
\begin{align*}
     (f_1\dots f_d)&=(e_1\otimes1\dots e_r\otimes1)X\\
     \varphi(f_1\dots f_d)&=(f_1\dots f_d)B.
\end{align*}
Let $V$ denote the $L_0$-vector subspace of $R[1/p]$ generated by $\{x_{ij}\}_{i,j}$.
Since $r_i= x_{i1}$ for all $1\leq i \leq r$, $r_i$ is contained in $V$ for all $1\leq i\leq r$. Hence, it is enough to prove that $V$ is stable under $\varphi_R$ by virtue of \cref{lem:another condition of phi-finite part}. By the equalities
\begin{align*}
    \varphi(f_1\dots f_d)&=\varphi\bigl((e_1\otimes 1\dots e_r\otimes1)X\bigr) =(e_1\otimes 1\dots e_r\otimes1)A \varphi(X)\\
    \varphi(f_1\dots f_d)&=(f_1\dots f_d)B=(e_1\otimes1\dots e_r\otimes1)XB,
\end{align*}
we see that $\varphi(X)=A^{-1}XB$. This implies that $V$ is stable under $\varphi$, as desired.
\end{proof}

\begin{definition}[$\varphi$-isocrystal]\label{def:isocrystal}
     Let $l$ be a perfect field of characteristic $p$ and we set $L_0:=W(l)[1/p]$. A {\em $\varphi$-isocrystal over $L_0$} is a finite height $\varphi$-module over $(L_0, \varphi, p)$, i.e., a finite dimensional $L_0$-vector space $V$ equipped with a semi-linear bijective morphism $\varphi\colon V\rightarrow V$ with respect to the Frobenius endomorphism on $L_0$.
\end{definition}

\begin{corollary}\label{cor:comparison isomorphism and phi-finite}
Let $D$ be a $\varphi$-isocrystal over $K_0$ and let $V$ be a finite dimensional $\mathbb{Q}_p$-vector space. Assume that we have a $B_\crys$-linear isomorphism
\begin{align*}
    D \otimes_{K_0} B_\crys \cong V \otimes_{\mathbb{Q}_p} B_\crys
\end{align*}
that is compatible with the Frobenii. Here, the Frobenius map on the left-hand side is defined as $\varphi_D \otimes \varphi$ and that on the right-hand side is defined as $\id_V \otimes \varphi$. Then, the isomorphism induces a $(B_{\crys}^+)_{\fin,W(\overline{k})}[1/t]$-linear isomorphism
\begin{align*}
    D \otimes_{K_0} (B_{\crys}^+)_{\fin, W(\overline{k})}[1/t] \cong V \otimes_{\mathbb{Q}_p} (B_{\crys}^+)_{\fin, W(\overline{k})}[1/t]
\end{align*}
\end{corollary}
\begin{proof}
Since $\varphi(t)=pt$, we see that $(B_{\crys})_\fin= (B^+_\crys)_{\fin}[1/t]$. By taking the $\varphi$-finite parts of the given isomorphism, we get the desired isomorphism by virtue of \cref{prop:phi-finite part and tensor product}.
\end{proof}

\begin{definition}\label{def:D_crysphifin}
For any $p$-adic Galois representation $V$ of $G_K$, we define
\begin{align*}
    D_{\crys,\fin}(V):=(V\otimes_{\mathbb{Q}_p} (B_{\crys}^+)_{\fin, W(\overline{k})}[1/t])^{G_K}.
\end{align*}
\end{definition}

\begin{corollary}\label{cor:D_crysphifin}
Let $V$ be a $p$-adic Galois representation of $G_K$ of dimension $d$. Then, $D_{\crys,\fin}(V)$ is a $K_0$-vector space of dimension at most $d$ and the canonical homomorphism
\begin{align}\label{eq:comparison map for D_crysphifin}
    D_{\crys,\fin}(V)\otimes_{K_0} (B_{\crys}^+)_{\fin,W(\overline{k})}[1/t] \rightarrow V\otimes_{\mathbb{Q}_p} (B_{\crys}^+)_{\fin, W(\overline{k})}[1/t]
\end{align}
is injective. Moreover, the following conditions are equivalent:
\begin{enumerate}
     \item $V$ is crystalline.
     \item The canonical homomorphism \eqref{eq:comparison map for D_crysphifin} is an isomorphism.
     \item $\dim_{K_0} D_{\crys,\fin}(V)=d$.
\end{enumerate}
\end{corollary}
\begin{proof}
Since $D_{\crys,\fin}(V)$ is a $K_0$-vector subspace of $D_\crys(V)$, $\dim_{K_0}D_{\crys,\fin}(V)\leq d$ and by the commutative diagram
\[\begin{tikzcd}
D_{\crys,\fin}(V)\otimes_{K_0} (B_{\crys}^+)_{\fin,W(\overline{k})}[1/t] \arrow[r]\arrow[d,hook]& V\otimes_{\mathbb{Q}_p} (B_{\crys}^+)_{\fin, W(\overline{k})}[1/t]\arrow[d,hook] \\ 
D_{\crys}(V)\otimes_{K_0} B_{\crys} \arrow[r,hook]& V\otimes_{\mathbb{Q}_p} B_{\crys},
\end{tikzcd}\]\eqref{eq:comparison map for D_crysphifin} is injective. If $V$ is crystalline, $D_{\crys,\fin}(V)= D_\crys(V)$ and \eqref{eq:comparison map for D_crysphifin} is an isomorphism by \cref{prop:Galois fix part,cor:comparison isomorphism and phi-finite}. If \eqref{eq:comparison map for D_crysphifin} is an isomorphism, then $\dim_{K_0}D_{\crys,\fin}(V)=d$. If $\dim_{K_0}D_{\crys,\fin}(V)=d$, then $V$ is crystalline since $D_{\crys,\fin}(V)$ is a $K_0$-vector subspace of $D_{\crys}(V)$.
\end{proof}

\begin{remark}\label{rem:D_crysphifin}
The $(\mathbb{Q}_p,G_K)$-ring $(B_{\crys})_{\fin,W(\overline{k})}$ is $G_K$-regular in the sense of \cite[1.4.1]{Fontaine1994_2}. This can be shown by using \cite[Lemme 5.1.3. (ii)]{Fontaine1994_2}.
\end{remark}

We remark that $(B_{\crys}^+)_{\fin,W(\overline{k})}$ is ``very small''.

\begin{lemma}\label{lem:phi finite part when residue field is algebraically closed}
Let $V$ be a $P_0:=W(\overline{k})[1/p]$-vector space equipped with a semi-linear endomorphism $\varphi\colon V \rightarrow V$ with respect to the Frobenius endomorphism on $W(\overline{k})$. For any $h\in \mathbb{Z}_{\geq 1}$ and $d\in \mathbb{Z}$, let $V^{\varphi^{h}=p^d} \cdot P_0$ denote the $P_0$-vector subspace of $V$ generated by $V^{\varphi^{h}=p^d}$. Then, $V_{\fin,W(\overline{k})}$ is a sum of $V^{\varphi^{h}=p^d} \cdot P_0$ as a $P_0$-vector space, where $h$ runs through $\mathbb{Z}_{\geq 1}$ and $d$ runs through $\mathbb{Z}$ with $\gcd(h,d)=1$.
\end{lemma}
\begin{proof}
Obviously, $V^{\varphi^{h}=p^d}\subseteq V_\fin$ for all $h\in \mathbb{Z}_{\geq 1}$ and $d\in \mathbb{Z}$. Combining it with \cref{lem:phi-finite part} (1), we see that the sum of $V^{\varphi^{h}=p^d} \cdot P_0$ is contained in $V_\fin$. We prove the other inclusion. Let $x\in V_\fin$ be an arbitrary element. By Dieudonn\'{e}-Manin's theorem (\cite{Manin1963}), we have a basis $e_1,\dots ,e_d$ of $V^\varphi(x)$ such that for every $i=1,\dots ,d$, there exists $h_i\in \mathbb{Z}_{\geq 1}$ and $d_i\in \mathbb{Z}$ with $\gcd(h_i,d_i)=1$ such that $\varphi^{h_i}(e_i)=p^{d_i}e_i$. Then, $x$ is contained in the $P_0$-vector subspace generated by $V^{\varphi^{h_i}=p^{d_i}}$ for all $i=1,\dots ,d$.
This implies the other inclusion.
\end{proof}

\begin{proposition}\label{prop:B_crys fin is very small}
The Frobenius endomorphism on $(B_{\crys}^+)_{\fin,W(\overline{k})}$ induced by that on $B_{\crys}^+$ is bijective. Moreover, this ring is a proper subring of $\widetilde{B}_\rig^+:=\bigcap_{n\geq 0} \varphi^n(B_{\crys}^+)$.
\end{proposition}
\begin{proof}
By \cref{lem:phi-finite part} (3), $(B_{\crys}^+)_{\fin}$ is stable under the Frobenius endomorphism on $B_{\crys}^+$.
Since the Frobenius endomorphism on $B_\crys^+$ is injective, the induced Frobenius endomorphism is also injective. For any $x\in (B_{\crys}^+)_\fin$, the induced Frobenius endomorphism on $V^\varphi(x)$ is injective, hence bijective. This implies $x\in \varphi(V^\varphi(x))\subseteq \varphi((B_{\crys}^+)_\fin)$. Thus, the first claim holds. As a consequence, we see that $(B_{\crys}^+)_\fin \subseteq \widetilde{B}_\rig^+$. It remains to show that they are not equal. On the one hand, $A_\inf \subseteq \widetilde{B}_\rig^+$. On the other hand, we have $A_{\inf, \fin}= W(\overline{k})$, which can be shown by using \cref{lem:phi finite part when residue field is algebraically closed} and the facts that $A_\inf^{\varphi^h=p^d}= 0$ for any $h\in \mathbb{Z}_{\geq 1}$ and $0\neq d \in \mathbb{Z}$ and that $A_\inf^{\varphi^h=1}= W(\mathbb{F}_{p^h})$ for any $h\in \mathbb{Z}_{\geq 1}$. Therefore, they are not equal by virtue of \cref{lem:phi-finite part} (2).
\end{proof}

The following lemma is \cite[Proposition 3.5]{Du2022}, which is a consequences of \cite[Th\'{e}or\`{e}me 11.1.7, Corollaire 11.1.14]{FarguesFontaine2018}.

\begin{lemma}[{\cite[Proposition 3.5]{Du2022}}]\label{lem:phi^h=p^d}
Let $(M, \Phi)$ be a finite height $\varphi$-module over $(B_\crys^+, \varphi,p)$ whose underlying $B_\crys^+$-module is finite free. Then, for any $h\in \mathbb{Z}_{\geq 1}$ and $d\in \mathbb{Z}$, we have a canonical isomorphism
\begin{align*}
    M^{\varphi^{h}=p^d}\xrightarrow{\cong} (M\otimes_{B_\crys^+} \overline{B})^{\varphi^{h}=p^d}.
\end{align*}
\end{lemma}
\begin{proof}
For the sake of the reader, we recall the proof. Let $B\in \{B_\crys^+, \overline{B}\}$.
For any $h\in \mathbb{Z}_{\geq 1}$ and $d\in \mathbb{Z}$, we define $B\langle d,h\rangle$ to be the $\varphi$-module over $(B, \varphi,p)$ that has a basis $e_1,\dots ,e_h$ as a $B$-module such that $\varphi(e_i)=e_{i+1}$ for $1\leq i\leq h-1$ and $\varphi(e_h)=p^d e_1$. It is finite height since $p$ is invertible in $B$ and $\varphi\colon B \rightarrow B$ is injective. By \cref{prop:scalar extension of phi-modules}, $M \otimes_{B_\crys^+} \overline{B}$ is a finite height $\varphi$-module over $(\overline{B}, \varphi, p)$. Then, we have canonical isomorphisms
\begin{alignat*}{2}
    M^{\varphi^{h}=p^d} \xrightarrow{\cong}  &\Hom_{B_\crys^+,\varphi}( B_\crys^+\langle d,h\rangle, M),\quad &&x\mapsto (f\colon B_\crys^+\langle d,h\rangle \rightarrow M \text{ s.t. }f(e_1)= x),\\
     (M\otimes \overline{B})^{\varphi^{h}=p^d} \xrightarrow{\cong} &\Hom_{\overline{B},\varphi}( \overline{B}\langle d,h\rangle, M\otimes \overline{B}),\quad &&y\mapsto (g\colon \overline{B}\langle d,h\rangle \rightarrow M\otimes \overline{B} \text{ s.t. }g(e_1)= y).
\end{alignat*}
Then, we have a commutative diagram
\[\begin{tikzcd}
M^{\varphi^h=p^d}\arrow[r,"\cong"] \arrow[d]& \Hom_{B_\crys^+,\varphi}( B_\crys^+\langle d,h\rangle, M) \arrow[d] \\
 (M\otimes_{B_\crys^+} \overline{B})^{\varphi^{h}=p^d} \arrow[r,"\cong"]& \Hom_{\overline{B},\varphi}( \overline{B}\langle d,h\rangle, M\otimes_{B_\crys^+} \overline{B}).
\end{tikzcd}\]
The right vertical arrow is an isomorphism by \cite[Th\'{e}or\`{e}me 11.1.7, Corollaire 11.1.14]{FarguesFontaine2018}, hence the left vertical arrow is also an isomorphism, as desired.
\end{proof}

The following theorem is one of the motivations of the $\varphi$-finite part and $\overline{B}$.

\begin{theorem}\label{thm:phi-finite part of B_crys and Bbar}
Let $(M, \Phi)$ be a finite height $\varphi$-module over ($B_\crys^+, \varphi,p)$ whose underlying $B_\crys^+$-module is finite free. Then, we have a canonical isomorphism
\begin{align*}
    M_{\fin,W(\overline{k})}\xrightarrow{\cong} (M\otimes_{B_\crys^+} \overline{B})_{\fin,W(\overline{k})}. 
\end{align*}
In particular, we have a canonical isomorphism $(B_{\crys}^+)_{\fin,W(\overline{k})}\xrightarrow{\cong} \overline{B}_{\fin,W(\overline{k})}. $
\end{theorem}
\begin{proof}
This is a consequence of \cref{lem:phi^h=p^d,lem:phi finite part when residue field is algebraically closed}.
\end{proof}

The following proposition is due to Du and it is also one of the motivations of the $\varphi$-finite part.

\begin{proposition}[{\cite[Lemma 4.11]{Du2022}}]\label{prop:G-fixed part of Bbar}
The canonical map $W(k) \rightarrow \overline{A}_\infty  \hookrightarrow \overline{A}$ induces isomorphisms
\begin{align*}
     W(k) \cong ((\overline{A}_{\infty})_{\fin,W(k_\infty)})^{\Gamma_K} \cong (\overline{A}_{\fin,W(\overline{k})})^{G_K},\\
     K_0 \cong ((\overline{B}_{\infty})_{\fin,W(k_\infty)})^{\Gamma_K}\cong (\overline{B}_{\fin,W(\overline{k})})^{G_K}.
\end{align*}
\end{proposition}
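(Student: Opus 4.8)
The plan is to reduce the whole statement to the single equality $\overline{B}_\fin^{G_K}=K_0$, and then to prove that by combining $p$-adic completion with a slope argument, following \cite[Lemma 4.11]{Du2022}.

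\emph{Reduction.} All maps in the two displayed chains are injective: $W(k)\hookrightarrow\overline{A}_\infty\hookrightarrow\overline{A}$ by \cref{prop:inclusion of Bbar}, and likewise after inverting $p$ and after passing to ($\varphi$-finite, fixed) submodules. So it suffices to identify the right-hand term of each chain. Since $K_\infty/K$ is Galois, $\overline{A}_\infty$ is $G_K$-stable in $\overline{A}$, and $H_K$ acts trivially on it (it fixes $\mathcal{O}_{\Kinf}$, hence $\mathcal{O}_{\Kinf}^\flat$, $\AKp$ and $\overline{A}_\infty$); as $G_K/H_K=\Gamma_K$, an element of $\overline{A}_\infty$ or $\overline{B}_\infty$ is $\Gamma_K$-fixed iff it is $G_K$-fixed. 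Together with \cref{lem:phi-finite part} (the identity $M_\fin=(M[1/p])_\fin\cap M$) and the fact that $\varphi$ is bijective on $\overline{A}$ and preserves $\overline{A}_\infty$, this yields
\[
\overline{A}_\fin^{G_K}=\overline{A}\cap\overline{B}_\fin^{G_K},\qquad
(\overline{A}_\infty)_\fin^{\Gamma_K}=\overline{A}_\infty\cap\overline{B}_\fin^{G_K},
\]
and similarly for $\overline{B}$. Finally $K_0\cap\overline{A}=W(k)$ inside $\overline{B}$: writing $x\in K_0\cap\overline{A}$ as $p^{-n}w$ with $w\in W(k)$ and $n$ minimal, the case $n\ge 1$ would force $w\equiv 0$ in $\overline{A}/p=A_\inf/(p,\mathfrak{q})=\mathcal{O}_{\mathbb{C}_p}^\flat/\mathfrak{m}_{\mathbb{C}_p}^\flat=\overline{k}$, contradicting $W(k)/p=k\hookrightarrow\overline{k}$. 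Granting $\overline{B}_\fin^{G_K}=K_0$, all four isomorphisms of the proposition follow.

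\emph{Proof of $\overline{B}_\fin^{G_K}=K_0$.} The inclusion $\supseteq$ is clear, since each element of $K_0$ spans a one-dimensional $\varphi$-stable $K_0$-subspace. For $\subseteq$, let $x\in\overline{B}_\fin^{G_K}$ and put $V:=\sum_{n\ge 0}K_0\,\varphi^n(x)$; by hypothesis this is a finite-dimensional $K_0$-subspace of $\overline{B}$, and since $\varphi$ is bijective on $\overline{B}$ (\cref{prop:property of Bbar}) and commutes with $G_K$, the pair $(V,\varphi)$ is an isocrystal over $k$ on which $G_K$ acts trivially, and the goal is $V\subseteq K_0$. Because $\overline{A}$ is $p$-torsion free (\cref{prop:property of Bbar}) with perfect quotient $\overline{A}/p=\overline{k}$, its $p$-adic completion is canonically $W(\overline{k})$ via Teichm\"uller lifts, compatibly with $\varphi$ and $G_K$; as $G_K$ acts on $W(\overline{k})$ through $\Gal(\overline{k}/k)$ and $W(\overline{k})^{\Gal(\overline{k}/k)}=W(k)$, one gets $\bigl(\widehat{\overline{A}}[1/p]\bigr)^{G_K}=K_0$. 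The image of $V$ in $\widehat{\overline{A}}[1/p]$ is again a $\varphi$-stable $K_0$-subspace with trivial $G_K$-action, hence lies in $K_0$; so it remains to show $V\cap I=0$, where $I:=\ker\!\bigl(\overline{B}\to\widehat{\overline{A}}[1/p]\bigr)=\bigcap_n p^n\overline{B}$.

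\emph{The crux.} It remains to prove: \emph{a finite isocrystal $W\subseteq I$ on which $G_K$ acts trivially is zero}. This is the heart of the matter, and it is precisely here that $\varphi$-finiteness is indispensable — without it the corresponding statement for $\overline{B}^{G_K}$ fails (cf.\ the remark on \cite[Theorem 4.9]{Du2022} in the introduction), so the argument must use in an essential way that $W$ spans a finite-dimensional $\varphi$-stable subspace. Here I would follow Du: decomposing $W$ by Dieudonn\'e--Manin slopes and using the element $\bar{t}\in I$ (the image of $t=\log[\epsilon]$, for which one checks $\bar t\ne 0$, $\varphi(\bar{t})=p\bar{t}$ and $g(\bar{t})=\chi_\cyc(g)\bar{t}$), one performs a d\'evissage on the slopes: a nonzero slope piece, after an appropriate $\bar{t}$-twist, acquires a strictly smaller slope while its $G_K$-action is multiplied by a power of $\chi_\cyc$; since the cyclotomic character is ramified, no nonzero vector of $\widehat{\overline{A}}[1/p](\chi_\cyc^m)$ is $G_K$-fixed for $m\ne 0$, so the twisted piece again falls into $I$, and iterating drives the slope to $0$ and reduces to a slope-zero isocrystal inside $I$, which one shows must vanish. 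The delicate point — and the step where I expect to have to be most careful, since it is where the subtlety that broke Du's stronger claim resurfaces — is controlling divisibility by $\bar{t}$ inside $I=\bigcap_n p^n\overline{B}$, i.e.\ describing $I$ as a module over $\widehat{\overline{A}}[1/p]$ built out of $\bar{t}$-twists.
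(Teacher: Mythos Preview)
Your reduction is sound and matches the paper's in spirit. The problem is the ``crux'': you do not prove it, and the d\'evissage you sketch is not the argument the paper (following Du) actually uses.

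Concretely, three things go wrong. First, your description $I=\bigcap_n p^n\overline{B}$ is vacuous, since $p\in\overline{B}^\times$; the kernel of $\overline{B}\to\widehat{\overline{A}}[1/p]$ is $\bigl(\bigcap_n p^n\overline{A}\bigr)[1/p]$. Second, the Dieudonn\'e--Manin decomposition into $\varphi^{h}=p^{d}$-eigenvectors that you invoke requires $k$ algebraically closed, and you never make that reduction. Third, and most seriously, your $\bar t$-twist d\'evissage is only a plan: you do not explain why elements of $I$ are divisible by $\bar t$ (they need not be---$\overline{B}$ is a domain and $\bar t$ is not a unit), how non-integral slopes are handled, or why a $G_K$-trivial slope-$0$ isocrystal sitting inside $I$ must vanish. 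This last point is exactly where your framework gives no leverage, since by definition such elements already die in $\widehat{\overline{A}}[1/p]$.

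The paper avoids all of this. After reducing to $k=\overline k$ (via the completed maximal unramified extension), one applies Dieudonn\'e--Manin to the isocrystal $V$ to get basis vectors $e$ with $\varphi^{h}(e)=p^{d}e$; the key input is then \cite[Lemma~3.4]{Du2022}, which says that the surjection $B_\crys^+\twoheadrightarrow\overline{B}$ restricts to an \emph{isomorphism} on each eigenspace $(-)^{\varphi^{h}=p^{d}}$. Pulling $e$ back to $B_\crys^+$ and using $(B_\crys^+)^{G_K}=K_0$ gives $e\in K_0$ directly. There is no need to analyse the $p$-adic completion of $\overline{A}$ or the ideal $I$ at all, and no $\bar t$-twist induction. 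You should replace your crux by this argument; the reduction you wrote can stay.
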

\begin{proof}
     The isomorphism $K_0 \cong (\overline{B}_{\fin,W(\overline{k})})^{G_K}$ is proved in \cite[Lemma 4.11]{Du2022}. For the convenience of the reader, we recall the proof. 
     By \cref{thm:phi-finite part of B_crys and Bbar}, we have a canonical isomorphism $(B_\crys^+)_{\fin, W(\overline{k})}\xrightarrow{\cong} \overline{B}_{\fin,W(\overline{k})}$. By taking the $G_K$-fixed part, we obtain an isomorphism $K_0 \xrightarrow{\cong} (\overline{B}_{\fin,W(\overline{k})})^{G_K}$ because $(B_\crys^+)^{G_K}=K_0$.

     We deduce the other isomorphisms from $K_0\cong (\overline{B}_{\fin,W(\overline{k})})^{G_K}$. Since we have already shown in \cref{prop:inclusion of Bbar} that the canonical map $\overline{B}_\infty \rightarrow \overline{B}$ is injective, \cref{lem:phi finite part as W(k) and W(l)} implies that $K_0  \rightarrow ((\overline{B}_\infty)_{\fin,W(k_\infty)})^{\Gamma_K}$ is also an isomorphism. 
     \[\begin{tikzcd}
     K_0\arrow[r]\arrow[dr, "\cong "', sloped] &((\overline{B}_\infty)_{\fin,W(k_\infty)})^{\Gamma_K}\arrow[d, hook] \\ 
     {}&(\overline{B}_{\fin,W(\overline{k})})^{G_K}
     \end{tikzcd}\]
   Next we show that the canonical map $W(k) \rightarrow  (\overline{A}_{\fin,W(\overline{k})})^{G_K}$ is an isomorphism. Since $\overline{A}$ is $p$-torsion free by \cref{prop:property of Bbar}, $W(k) \rightarrow \overline{A}$ is injective. We may consider $W(k)$, $K_0$, and $\overline{A}$ as subrings of $\overline{B}$. Since we have already shown that $K_0 = (\overline{B}_{\fin,W(\overline{k})})^{G_K}$, it is enough to show the equality $W(k)= \overline{A}\cap K_0$ by \cref{lem:phi-finite part} (2). Indeed, it implies
   \begin{align*}
       W(k)=\overline{A}\cap (\overline{B}_{\fin, W(\overline{k})})^{G_K}=\overline{B}^{G_K}\cap \overline{A}\cap \overline{B}_{\fin,W(\overline{k})}=\overline{B}^{G_K}\cap \overline{A}_{\fin,W(\overline{k})}=(\overline{A}_{\fin,W(\overline{k})})^{G_K}.
   \end{align*}
      Take an arbitrary element $x\in \overline{A}\cap K_0$.
     Since the composite $K_0 \hookrightarrow \overline{B}\twoheadrightarrow P_0$ coincides with the natural inclusion $K_0 \hookrightarrow P_0$ and $x\in \overline{A}$, the image of $x\in \overline{B}$ by $\overline{B} \twoheadrightarrow  P_0$ is contained in $W(\overline{k})\cap K_0= W(k)$. Thus $x\in W(k)$ as desired. The bijectivity of the map $W(k) \rightarrow ((\overline{A}_\infty)_{\fin,W(k_\infty)})^{\Gamma_K}$ can be shown by the same argument or by using the fact that  $W(k)\rightarrow ((\overline{A}_\infty)_{\fin,W(k_\infty)})^{\Gamma_K}  \hookrightarrow (\overline{A}_{\fin,W(\overline{k})})^{G_K}$ is an isomorphism.
\end{proof}

\begin{remark}\label{rem:G-fixed part of Bbar}
     By virtue of \cref{lem:phi finite part as W(k) and W(l)}, we also have isomorphisms
     \begin{align*}
     W(k) \cong (\overline{A}_{\infty})_{\fin,W(k)}^{\Gamma_K} \cong \overline{A}_{\fin,W(k)}^{G_K},\\
     K_0 \cong (\overline{B}_{\infty})_{\fin,W(k)}^{\Gamma_K}\cong \overline{B}_{\fin,W(k)}^{G_K}.
\end{align*}
\end{remark}

\begin{definition}[$\varphi$-isocrystal part]\label{def:isocrystal part}
     Let $l$ be a perfect field of characteristic $p$ and we set $L_0:=W(l)[1/p]$. Let $M$ be a $p$-torsion free $W(l)$-module equipped with a semi-linear morphism $\varphi\colon M \rightarrow M[1/p]$ with respect to the Frobenius endomorphism on $W(l)$. We define the \emph{$\varphi$-isocrystal part of $M$} by
\begin{align*}
    M_{\icrys,W(l)}:=\{x\in M\mid V^{\varphi}_{L_0}(x)\text{ is a $\varphi$-isocrystal over $L_0$} \}.
\end{align*}
If there is no risk of confusion, we abbreviate $M_{\icrys,W(l)}$ as $M_\icrys$.
\end{definition}

% \begin{lemma}\label{lem:isocrystal part 2}
%      In the notation in \cref{def:isocrystal part}, let $x\in M_\fin$. Then, $x\in M_\icrys$ if and only if $\varphi\colon V^{\varphi}_{L_0}(x) \rightarrow  V^{\varphi}_{L_0}(x)$ is either injective or surjective. In particular, if $\varphi[1/p]\colon M[1/p]\rightarrow M[1/p]$ is injective, then $M_\fin=M_\icrys$.
% \end{lemma}
% \begin{proof}
%      The natural map $V^{\varphi}(x)\rightarrow V^{\varphi}(x)\otimes_{L_0,\varphi}L_0,\ v\mapsto v \otimes 1$ is bijective as $\varphi\colon L_0 \xrightarrow{\cong}L_0$. Thus, $\varphi\colon V^{\varphi}(x) \rightarrow  V^{\varphi}(x)$ is bijective (resp. injective, resp. surjective) if and only if the associated $L_0$-linear map $\Phi\colon V^{\varphi}(x)\otimes_{L_0,\varphi}L_0 \rightarrow V^{\varphi}(x)$ is bijective (resp. injective, resp. surjective). Since the latter is an $L_0$-linear map between finite dimensional $L_0$-vector spaces of equal dimension, the claim holds. 
% \end{proof}

\begin{lemma}\label{lem:another condition of phi-isocrystal part}
Let $l$ be a perfect field of characteristic $p$. We put $L_0:=W(l)[1/p]$.
Let $M$ be a $p$-torsion free $W(l)$-module equipped with a semi-linear morphism $\varphi\colon M \rightarrow M[1/p]$ with respect to the Frobenius endomorphism on $W(l)$. Let $x\in M$ be an arbitrary element. Then, $x$ belongs to $M_\icrys$ if and only if $x$ is contained in a finite dimensional $L_0$-vector subspace $V$ of $M[1/p]$ such that it is stable under $\varphi$ and the induced map $\varphi_V\colon V \rightarrow V$ is either injective or surjective. 
\end{lemma}
\begin{proof}
Assume $x\in M_\icrys$. Then, $x\in V^\varphi(x)$ and $V^\varphi(x)$ is a finite dimensional $L_0$-vector subspace of $M[1/p]$ stable under $\varphi$ and the induced map is bijective.
Conversely, assume that $x\in V$ for some finite dimensional $L_0$-vector subspace such that it is stable under $\varphi$ and $\varphi_V\colon V \rightarrow V$ is either injective or surjective. Then, $\varphi_V$ induces an $L_0$-linear map $\Phi_V\colon V \otimes_{L_0,\varphi}L_0 \rightarrow V$. Since the canonical map $V \rightarrow V \otimes_{L_0,\varphi}L_0,\ v\mapsto v \otimes1$ is bijective, $\varphi_V$ is injective (resp. surjective) if and only if $\Phi_V$ is injective (resp. surjective). Since $\Phi_V$ is an $L_0$-linear map between finite dimensional $L_0$-vector spaces of equal dimension, injectivity (resp. surjectivity) of $\Phi_V$ implies its bijectivity. Therefore, we see that $\varphi_V$ is bijective. 
Since $V$ is stable under $\varphi$, we see that $V^\varphi(x)$ is an $L_0$-vector subspace of $V$, hence it is finite dimensional. Moreover, we see that $\varphi\colon V^\varphi(x) \rightarrow V^\varphi(x)$ is injective. By the same argument as above, this implies that $\varphi\colon V^\varphi(x) \rightarrow V^\varphi(x)$ is bijective. Therefore, $x\in M_\icrys$.
\end{proof}

\begin{lemma}\label{lem:isocrystal part}
Let $l$ be a perfect field of characteristic $p$.
Let $M$ be a $p$-torsion free $W(l)$-module equipped with a semi-linear morphism $\varphi\colon M \rightarrow M[1/p]$ with respect to the Frobenius endomorphism on $W(l)$. The following hold.
\begin{enumerate}
     \item $M_\icrys$ is a $W(l)$-submodule of $M$. 
     \item Let $N$ be a $W(l)$-submodule of $M$ satisfying $\varphi(N)\subseteq N[1/p]$. Then, we have $N_\icrys= M_\icrys \cap N$. In particular, $M_\icrys=(M[1/p])_\icrys\cap M$.
     \item We have $(M[1/p])_\icrys=M_\icrys[1/p]$ and $\varphi(M_\icrys)\subseteq (M[1/p])_\icrys$. Moreover, $\varphi\colon M_\icrys[1/p] \rightarrow M_\icrys[1/p]$ is bijective.
\end{enumerate}
\end{lemma}
\begin{proof}
(1) We use \cref{lem:another condition of phi-isocrystal part}. Obviously, $0\in M_{\icrys}$. For any $a\in W(l)$ and $x\in M_\icrys$, $ax \in V^\varphi(x)$, hence $ax\in M_{\icrys}$. We also have $x+x'\in V^{\varphi}(x)+V^{\varphi}(x')$ for any $x,x'\in M_\icrys$. Since $V^\varphi(x)+ V^\varphi(x')$ is stable under $\varphi$ and the induced $\varphi\colon V^\varphi(x)+ V^\varphi(x') \rightarrow V^\varphi(x)+ V^\varphi(x')$ is surjective, we see that $x+x'\in M_{\icrys}$. Therefore, $M_\icrys$ is a $W(l)$-submodule of $M$.

(2) follows from the same argument as in the proof of \cref{lem:phi-finite part} (2).

(3) The first assertion follows from the same argument as in the proof of \cref{lem:phi-finite part} (3). 
Since $\varphi(x)\in V^\varphi(x)$, we see that $\varphi(x)\in (M[1/p])_\icrys$ for any $x\in M_\icrys$. Then, we obtain a homomorphism $\varphi\colon M_\icrys[1/p] \rightarrow M_\icrys[1/p]$.
Since $\varphi\colon V^\varphi(x)\rightarrow V^{\varphi}(x)$ is an isomorphism for any $x\in (M[1/p])_\icrys$, $\varphi\colon M_\icrys[1/p] \rightarrow M_\icrys[1/p]$ is injective. By the surjectivity of $\varphi\colon V^{\varphi}(x)\rightarrow V^\varphi(x)$, there exists $\varphi^{-1}(x)\in V^{\varphi}(x)$. Then, $\varphi^{-1}(x)\in M_\icrys[1/p]$. Therefore, $\varphi\colon M_\icrys[1/p] \rightarrow M_\icrys[1/p]$ is surjective, thereby bijective.
\end{proof}

\begin{corollary}\label{cor:isocrystal part and G-action}
      Let $l$ be a perfect field of characteristic $p$.
     Let $M$ be a $p$-torsion free $W(l)$-module equipped with a semi-linear morphism $\varphi\colon M \rightarrow M[1/p]$ with respect to the Frobenius endomorphism on $W(l)$. Let $G$ be a group. Assume that $W(l)$ has an action of $G$ and that $M$ has a semi-linear action of $G$ that commutes with $\varphi$.
     \begin{enumerate}
          \item The $W(l)$-submodule $M_\icrys$ is stable under the $G$-action.
          \item If $G$ acts on $W(l)$ trivially and hence the $G$-action on $M$ is $W(l)$-linear, then we have an equality $(M_\icrys)^G=(M^G)_\icrys$, which we abbreviate to $M_\icrys^G$.
     \end{enumerate}
\end{corollary}
\begin{proof}
This can be shown by the same argument as in the proof of \cref{cor:phi-finite part and G-action}.
\end{proof}

\begin{lemma}\label{lem:isocrys part as W(k) and W(l)}
Let $l'/l$ be an extension of perfect fields of characteristic $p$. Let $M$ be a $p$-torsion free $W(l')$-module equipped with a semi-linear morphism $\varphi\colon M \rightarrow M[1/p]$ with respect to the Frobenius endomorphism on $W(l')$. Then, we have $M_{\icrys, W(l)}\subseteq M_{\icrys, W(l')}$.
\end{lemma}
\begin{proof}
     We set $L_0':=W(l')[1/p]$ and $L_0:=W(l)[1/p]$. For any $x\in M$, consider the following commutative diagram:
     \[\begin{tikzcd}
     V^\varphi_{L_0}(x)\otimes_{L_0} L_0' \arrow[r,->>]\arrow[d,"\varphi \otimes \varphi"]&V^{\varphi}_{L_0'}(x) \arrow[d,"\varphi"]\\ 
     V^\varphi_{L_0}(x)\otimes_{L_0} L_0' \arrow[r,->>]&V^{\varphi}_{L_0'}(x).
     \end{tikzcd}\]
     If $x\in M_{\icrys, W(l)}$, $V^{\varphi}_{L_0'}(x)$ is a finite dimensional $L_0'$-vector space by \cref{lem:phi finite part as W(k) and W(l)} and the left vertical arrow is bijective. Hence, the right vertical arrow is surjective. By \cref{lem:another condition of phi-isocrystal part}, this implies $x\in M_{\icrys,W(l')}$.
\end{proof}

\begin{proposition}\label{prop:isocrystal part and multiplication}
Let $l$ be a perfect field of characteristic $p$.
Let $R$ be a $p$-torsion free $W(l)$-algebra equipped with a ring homomorphism $\varphi\colon R \rightarrow R[1/p]$ that is compatible with $\varphi\colon W(l)\rightarrow W(l)$. Then, for any $x,y\in R_\icrys$, $xy\in R_\icrys$.
\end{proposition}
\begin{proof}
We set $L_0:= W(l)[1/p]$.
Take arbitrary elements $x,y\in R_\icrys$. We set $V:=\Ima (V^{\varphi}(x)\otimes_{L_0} V^{\varphi}(y)\rightarrow R[1/p])$, where the map is an $L_0$-linear map induced by the multiplication of $R[1/p]$. As we have seen in \cref{prop:phi-finite part and multiplication}, $V$ is a finite dimensional $L_0$-vector space stable under $\varphi$ and containing $xy$. We also have a commutative diagram
\[\begin{tikzcd}
V^{\varphi}(x)\otimes_{L_0} V^{\varphi}(y) \arrow[r,->>] \arrow[d, "\varphi \otimes \varphi","\cong"'] & V \arrow[d, "\varphi"] \\
V^{\varphi}(x)\otimes_{L_0} V^{\varphi}(y) \arrow[r,->>] & V.
\end{tikzcd}\]
Therefore, $\varphi\colon V \rightarrow V$ is surjective. By \cref{lem:another condition of phi-isocrystal part}, $xy\in R_\icrys$ as desired.
\end{proof}

\begin{proposition}\label{prop:isocrystal part and tensor product}
Let $l$ be a perfect field of characteristic $p$ and we set $L_0:=W(l)[1/p]$.
Let $R$ be a $p$-torsion free $W(l)$-algebra and let $M$ be a finite free $W(l)$-module of rank $r$ equipped with $L_0$-linear homomorphisms
\begin{align*}
     \varPhi_M&\colon M\otimes_{W(l),\varphi}L_0  \xrightarrow{\cong } M[1/p],\\
     \varPhi_R&\colon R\otimes_{W(l),\varphi}L_0  \rightarrow R[1/p]. 
\end{align*}
Then $(M\otimes_{W(l)}R)_\icrys =M \otimes_{W(l)} R_\icrys$ in $M\otimes_{W(l)}R$.
\end{proposition}
\begin{proof}
This can be shown by an argument similar to that in the proof of \cref{prop:phi-finite part and tensor product}.
Indeed, for any $m\in M$ and $r\in R_\icrys$, $M[1/p] \otimes_{L_0} V^{\varphi}(r)$ is a finite dimensional $L_0$-vector subspace of $M[1/p] \otimes_{L_0} R[1/p]$ stable under $\varphi_M \otimes \varphi_R$ and the induced Frobenius map is bijective. By \cref{lem:another condition of phi-isocrystal part}, this implies $m \otimes r\in (M \otimes_{W(l)} R)_\icrys$. Combining this with \cref{lem:isocrystal part} (1), we obtain the inclusion $M \otimes_{W(l)} R_\icrys \subseteq (M \otimes_{W(l)} R)_\icrys$.
As for the other inclusion, under the notation in the proof of \cref{prop:phi-finite part and tensor product}, if $x\in (M \otimes_{W(l)}R)_{\icrys}$, then $B\in GL_d (L_0)$. In this case, we have $X=A \varphi(X)B^{-1}$. Hence, $\varphi$ on $V$ is surjective, which implies that $r_i\in R_{\icrys}$ by \cref{lem:another condition of phi-isocrystal part}.
\end{proof}

\begin{proposition}\label{prop:A_K^+/a and Abar}
For any $a\in \mathfrak{m}_{\Kinf}^{\flat}$ (resp. $a\in \mathfrak{m}_{\mathbb{C}_p}^\flat$), the canonical map $\AKp/[a] \twoheadrightarrow  \overline{A}_\infty$ (resp. $A_\inf/[a]\twoheadrightarrow \overline{A}$) induces injective homomorphisms
\begin{alignat*}{2}
     (\AKp/[a])_{\icrys,W(k_\infty)} &\hookrightarrow (\overline{A}_\infty)_{\fin,W(k_\infty)} &\qquad(\text{resp. }(A_\inf/[a])_{\icrys,W(\overline{k})} &\hookrightarrow \overline{A}_{\fin, W(\overline{k})}), \\
     (\BKp/[a])_{\icrys,W(k_\infty)} &\hookrightarrow (\overline{B}_\infty)_{\fin,W(k_\infty)} &\qquad(\text{resp. }(B_\inf/[a])_{\icrys,W(\overline{k})} &\hookrightarrow \overline{B}_{\fin, W(\overline{k})}).
\end{alignat*}
\end{proposition}
\begin{proof}
We only prove the injectivities of left homomorphisms since the right ones can be shown by the same argument. By \cref{lem:phi-finite part} (3) and \cref{lem:isocrystal part} (3), it is enough to show that $(\AKp/[a])_\icrys\rightarrow (\overline{A}_\infty)_\fin$ is injectivie. Let $x\in \AKp$ be an arbitrary element whose image $\overline{x}\in\AKp/[a]$ is contained in the kernel of the homomorphism. Then, $x\in \mathfrak{q}_\infty$, hence we may write $x=[b]y$ for some $b\in \mathfrak{m}_{\Kinf}^{\flat}$ and $y\in \AKp$. Since $\varphi([b])=[b^p]$, there exists $n\in \mathbb{Z}_{\geq 1}$ such that $\varphi^n(\overline{x})=0$. This implies $\overline{x}=0$ since $\varphi\colon V^\varphi(\overline{x})\rightarrow V^{\varphi}(\overline{x})$ is bijective and $\AKp/[a]\hookrightarrow \BKp/[a]$ by \cref{lem:Ainf/x is p-torsion free}.
\end{proof}

\begin{corollary}\label{cor:G-fixed part of A_K^+/a}
For any $a\in \mathfrak{m}_{\Kinf}^{\flat}$ (resp. $a\in \mathfrak{m}_{\mathbb{C}_p}^\flat$), the canonical map $W(k) \rightarrow \AKp/[a]$ (resp. $W(k)\rightarrow A_\inf/[a]$) induces isomorphisms
\begin{alignat*}{2}
     W(k) &\cong ((\AKp/[a])_{\icrys,W(k_\infty)})^{\Gamma_K} &\qquad(\text{resp. }W(k) &\cong ((A_\inf/[a])_{\icrys, W(\overline{k})})^{G_K}), \\
     K_0 &\cong ((\BKp/[a])_{\icrys,W(k_\infty)})^{\Gamma_K} &\qquad(\text{resp. }K_0 &\cong ((B_\inf/[a])_{\icrys, W(\overline{k})})^{G_K}).
\end{alignat*}
\end{corollary}
\begin{proof}
We only prove the first isomorphism since the others can be shown by the same argument. We have a canonical homomorphism
\begin{align*}
    W(k)\rightarrow ((\AKp/[a])_{\icrys,W(k_\infty)})^{\Gamma_K} \rightarrow  ((\overline{A}_\infty)_{\fin,W(k_\infty)})^{\Gamma_K}.
\end{align*}
By \cref{prop:G-fixed part of Bbar}, the composite is an isomorphism, thus the second homomorphism is surjective. By \cref{prop:A_K^+/a and Abar}, it is also injective, hence an isomorphism. Therefore, the first homomorphism is an isomorphism.
\end{proof}

\begin{remark}\label{rem:G-fixed part of A_K^+/a}
     By virtue of \cref{lem:isocrys part as W(k) and W(l)}, we also have isomorphisms
     \begin{alignat*}{2}
     W(k) &\cong (\AKp/[a])_{\icrys,W(k)}^{\Gamma_K} &\qquad W(k) &\cong (A_\inf/[a])_{\icrys, W(k)}^{G_K}, \\
     K_0 &\cong (\BKp/[a])_{\icrys,W(k)}^{\Gamma_K} &\qquad K_0 &\cong (B_\inf/[a])_{\icrys, W(k)}^{G_K}.
\end{alignat*}

\end{remark}

\subsection{Construction of the functor}
In the rest of this paper, we fix an element $\a\in \mathfrak{m}_{\Kinf}^{\flat}$ such that $v(\a)=p v_p(\pi)$. For the existence of such an element, see \cite[Proposition 3.1.9]{FarguesFontaine2018}.
Let $N$ be a $(\varphi,\Gamma)$-module over $\AKp$. 
Recall that $\xi_p:=pu-[\tau]\in \AKp$ defined in the proof of \cref{lem:Ainf is a local ring} is a generator of $\Ker  \theta$. We see that $\xi\in \xi_p\cdot(\AKp)^\times$ and $\varphi(\xi_p)\equiv p\varphi(u) \bmod [\a]\AKp$. Since $\theta(u)$ is a unit, \cref{lem:Ainf is a local ring} implies that $u$ is invertible in $\AKp$.
By tensoring $\varPhi_N\colon N\otimes_{\AKp,\varphi}\AKp[1/\varphi(\xi)] \xrightarrow{\cong }N[1/\varphi(\xi)] $ with $\AKp/[\a]$, we obtain a $\BKp/[\a]$-linear isomorphism
\begin{align*}
     \varPhi_{N/[\a]}\colon N/[\a] \otimes_{\AKp/[\a],\varphi}\BKp/[\a]  \xrightarrow{\cong } N/[\a][1/p]. 
\end{align*}
Thus the $W(k)$-module $N/[\a]$ has a semi-linear map $\varphi_{N/[\a]}\colon N/[\a] \rightarrow N/[\a][1/p]$ and we can think of the $\varphi$-finite part and the $\varphi$-isocrystal part of $N/[\a]$. Similarly, we have a $\overline{B}_\infty$-linear isomorphism 
\begin{align*}
    \varPhi_{N/\mathfrak{q}_\infty}\colon N/\mathfrak{q}_\infty \otimes_{\overline{A}_\infty,\varphi}\overline{B}_\infty  \xrightarrow{\cong } N/\mathfrak{q}_\infty[1/p] 
\end{align*} 
and we can think of the $\varphi$-finite part of $N/\mathfrak{q}_\infty$ as well.

\begin{definition}\label{def:crystalline (phi Gamma)-modules}
     A $(\varphi,\Gamma)$-module $N$ over $\AKp$ is said to be {\em crystalline} if it satisfies
     \begin{enumerate}
          \item The $\Gamma_K$-action on $N$ is trivial modulo $\mu$.
     \end{enumerate}
     and either of the following conditions:
     \begin{enumerate}
          \item[(2-A)] The $W(k)$-module $(N/[\a])^{\Gamma_K}_{\icrys,W(k)}$ is finite free  and the canonical map
               \begin{align}\label{eq:condition (2-A)}
                    (N/[\a])^{\Gamma_K}_{\icrys,W(k)} \otimes_{W(k)} \AKp/[\a]  \rightarrow N/[\a]
               \end{align}
               is an isomorphism.
          \item[(2-a)] The $W(k)$-module $(N/[\a])^{\Gamma_K}_{\icrys,W(k)}$ generates $N/[\a]$ as an $\AKp/[\a]$-module.
          \item[(2-B)] The $(\AKp/[\a])_{\fin,W(k)}^{\Gamma_K}$-module $(N/[\a])^{\Gamma_K}_{\fin,W(k)}$ is finite free and the canonical map
               \begin{align}\label{eq:condition (2-B)}
                    (N/[\a])^{\Gamma_K}_{\fin,W(k)} \otimes_{(\AKp/[\a])_{\fin,W(k)}^{\Gamma_K}} \AKp/[\a]  \rightarrow N/[\a]
               \end{align}
               is an isomorphism. 
          \item[(2-b)] The $(\AKp/[\a])_{\fin,W(k)}^{\Gamma_K}$-module $(N/[\a])^{\Gamma_K}_{\fin,W(k)}$ generates $N/[\a]$ as an $\AKp/[\a]$-module.
          \item[(2-C)] The $W(k)$-module $(N/\mathfrak{q}_{\infty})^{\Gamma_K}_{\fin,W(k)}$ is finite free and the canonical map
               \begin{align}\label{eq:condition (2-C)}
                    (N/\mathfrak{q}_\infty)^{\Gamma_K}_{\fin,W(k)} \otimes_{W(k)} \overline{A}_{\infty}  \rightarrow N/\mathfrak{q}_\infty
               \end{align}
               is an isomorphism.
          \item[(2-c)] The $W(k)$-module $(N/\mathfrak{q}_{\infty})^{\Gamma_K}_{\fin,W(k)}$ generates $N/\mathfrak{q}_\infty$ as an $\overline{A}_{\infty}$-module.
     \end{enumerate}
     The category of crystalline $(\varphi,\Gamma)$-modules over $\AKp$ is denoted by $\Mod_{\varphi,\Gamma}^{\fh, \crys}(\AKp)$.
\end{definition}

\begin{remark}\label{rem:crystalline (phi Gamma)-modules}
     We can consider similar conditions by using the $\varphi$-finite part and the $\varphi$-isocrystal part as $W(k_\infty)$-modules instead of $W(k)$-modules, but we omit this here.
\end{remark}

Our main theorem is the following:

\begin{theorem}[{\cref{thm:Main theorem for crystalline}}]
     There exists an equivalence of categories between the category of crystalline $(\varphi,\Gamma)$-modules over $\AKp$ and the category of free crystalline $\mathbb{Z}_p$-representations of $G_K$ via the functor
     \begin{align*}
          T\colon \Mod_{\varphi,\Gamma}^{\fh, \crys}(\AKp)  \xrightarrow{\sim} \Rep_{\mathbb{Z}_p}^{\crys}(G_K),\qquad N \mapsto T(N):=\bigl(N\otimes_{\AKp} W(\mathbb{C}_p^\flat)\bigr)^{\varphi=1}.
     \end{align*}

\end{theorem}

We have already shown that $T(N)$ is a free $\mathbb{Z}_p$-representation of $G_K$ in \cref{cor:well-definedness as p-adic rep}. The goal of this subsection is to show that $V(N):=T(N)[1/p]$ is a crystalline representation. 

\begin{lemma}[{\cite[Lemma 4.26]{BhattMorrowScholze}}, {\cite[Proposition 6.15]{MorrowTsuji2021}}]\label{lem:isomorphism between N and T after inverting mu}
Let $N$ be a $(\varphi,\Gamma)$-module over $\AKp$. Then, we have an $A_\inf[1/\mu]$-linear isomorphism
\begin{align}\label{eq:isomorphism between N and T after inverting mu}
    N\otimes_{\AKp} A_\inf[1/\mu] \cong T(N) \otimes_{\mathbb{Z}_p} A_\inf[1/\mu]
\end{align}
that is compatible with the $G_K$-actions and the Frobenii. Moreover, it is functorial in $N$.
\end{lemma}
\begin{proof}
This is \cite[Lemma 4.26]{BhattMorrowScholze} or \cite[Proposition 6.15]{MorrowTsuji2021}. We only remark that the isomorphism is induced by the isomorphism \eqref{eq:isomorphism of Fontaine} due to J.-M. Fontaine (\cite[Proof of Proposition 1.2.6]{Fontaine1990}). Therefore, it is compatible with the $G_K$-actions and the Frobenius maps and functorial in $N$.
\end{proof}

\begin{lemma}\label{lem:phi-module over A_inf to Bcrys}
Let $N$ be a $\varphi$-module over $(\AKp, \varphi, \varphi(\xi))$. Then, $N\otimes_{\AKp} B_\crys^+$ equipped with the Frobenius map $\varphi_N \otimes \varphi$ is a $\varphi$-module over $(B_\crys^+, \varphi, p)$. If $N$ is finite height, then $N\otimes_{\AKp} B_\crys^+$ is finite height.
\end{lemma}
\begin{proof}
By \cref{prop:scalar extension of phi-modules}, it is enough to show an equality of ideals $\varphi(\xi)A_\crys = p A_\crys$. 
\begin{align*}
    \varphi(\xi)=\frac{\varphi(\mu)}{\mu}=\frac{(\mu+1)^p-1}{\mu}=\sum_{i=1}^{p}\binom{p}{i}\mu^{i-1}=p\left(1+\sum_{i=2}^p\frac{1}{p}\binom{p}{i}\mu^{i-1}\right).
\end{align*}
As $\mu^{p-1}\in p A_\crys$, the above calculation implies that $\varphi(\xi)/p\in 1+\Fil^1 A_\crys$. Since $x^p\in pA_\crys$ for all $x\in \Fil^1 A_\crys$, every element in $\Fil^1 A_\crys$ is topologically nilpotent. Because $A_\crys$ is $p$-adically complete and separated, this implies that $1+ \Fil^1 A_\crys \subseteq (A_\crys)^\times$. Therefore, $\varphi(\xi)A_\crys = p A_\crys$.
\end{proof}
\begin{remark}\label{rem:phi-module over A_inf to Brig}
     We define $\widetilde{B}_\rig^+:= \bigcap_{n\geq 0} \varphi^n(B_\crys^+)$ as usual. We note that $\varphi(\xi)$ is not invertible in $\widetilde{B}_\rig^+$. Assume contrary that there exists $x\in \widetilde{B}_\rig^+$ such that $\varphi(\xi)x=1$. There exists $y\in B_\crys^+$ such that $x= \varphi(y)$. Then, $1=\varphi(\xi)x= \varphi(\xi)\varphi(y)=\varphi(\xi y)$ in $B_\crys^+$. Since $\varphi\colon B_\crys^+ \rightarrow B_\crys^+$ is injective, this implies $\xi y=1$ in $B_\crys^+$, i.e., $\xi$ is invertible in $B_\crys^+$. This is a contradiction since $B_\crys^+ \subseteq B_\dR^+$ and $\xi$ is not invertible in $B_\dR^+$.
\end{remark}

\begin{lemma}[{\cite[Lemma 12.2]{BhattMorrowScholze}, \cite[Lemma 90 (2)]{Tsuji_simons}}]\label{lem:mu is invertible in Acrys}
$t/\mu \in B_{\dR}^+$ lies in $A_\crys^\times$ and hence we have a canonical homomorphism $A_\inf[1/\mu] \rightarrow B_\crys$.
\end{lemma}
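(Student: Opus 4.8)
The two assertions will follow once I establish the divisibility $\mu^{p-1}\in pA_\crys$ inside $A_\crys$. Set $\widetilde{\mu}:=\varphi^{-1}(\mu)=[\epsilon^{1/p}]-1$, so that $\mu=\xi\widetilde{\mu}$. Reducing modulo $p$ in $A_\inf/pA_\inf=\mathcal{O}_{\mathbb{C}_p}^\flat$ and writing $v^\flat$ for the valuation of $\mathbb{C}_p^\flat$ (normalized by $v^\flat(x)=v_p(x^\sharp)$), one has $v^\flat(\overline{\mu})=\tfrac{p}{p-1}$; since $\overline{\widetilde{\mu}}^{\,p}=\overline{\varphi(\widetilde{\mu})}=\overline{\mu}$ this gives $v^\flat(\overline{\widetilde{\mu}})=\tfrac1{p-1}$, and hence $v^\flat(\overline{\widetilde{\mu}}^{\,p-1})=1=v^\flat(\overline{\xi})$. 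As $\mathcal{O}_{\mathbb{C}_p}^\flat$ is a valuation ring, this yields $\widetilde{\mu}^{\,p-1}=\xi v+pz$ for some $v,z\in A_\inf$, whence
\begin{align*}
\frac{\mu^{p-1}}{p}=\frac{\xi^{p-1}\widetilde{\mu}^{\,p-1}}{p}=v\cdot\frac{\xi^{p}}{p}+\xi^{p-1}z=v\,(p-1)!\,\frac{\xi^{p}}{p!}+\xi^{p-1}z\in A_\crys,
\end{align*}
because $\xi^p/p!\in A_\crys$. It follows that $\mu^{m}\in pA_\crys$ for all $m\geq p-1$, and more generally $\mu^{m}\in p^{\lfloor m/(p-1)\rfloor}A_\crys$ for all $m\geq 0$.

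Next I would prove $t/\mu\in A_\crys$. Consider $w:=\sum_{n\geq 1}\frac{(-1)^{n+1}}{n}\mu^{n-1}$. Combining the elementary inequality $v_p(n)\leq\lfloor\tfrac{n-1}{p-1}\rfloor$ (which is just $p^{v_p(n)}-1\geq v_p(n)(p-1)$) with the bound $\mu^{n-1}\in p^{\lfloor(n-1)/(p-1)\rfloor}A_\crys$ from the previous step, every term $\tfrac{(-1)^{n+1}}{n}\mu^{n-1}$ lies in $A_\crys$ and tends $p$-adically to $0$, so the series converges in the $p$-adically complete ring $A_\crys$. Multiplying by $\mu$ (a $p$-adically continuous operation) gives $\mu w=\sum_{n\geq1}\frac{(-1)^{n+1}}{n}\mu^n=\log[\epsilon]=t$, so $t/\mu=w\in A_\crys\subseteq B_\dR^+$. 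To see it is a unit, apply $\theta$ to the partial sums (which already lie in $A_\crys$): since $\theta(\mu)=0$, all but the first term vanish, so $\theta(w)=1$ and $x:=w-1$ lies in $\ker\theta$, the divided power ideal of $A_\crys$. Then $\sum_{n\geq 0}(-x)^n=\sum_{n\geq0}(-1)^n n!\,\tfrac{x^n}{n!}$ converges in $A_\crys$ (as $n!\to 0$ $p$-adically and $x^n/n!\in A_\crys$) and inverts $w=1+x$; hence $t/\mu\in A_\crys^\times$.

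For the second assertion, $t$ is invertible in $B_\crys$ by construction, so $\mu=t\,w^{-1}$ is a unit of $B_\crys$; consequently the inclusion $A_\inf\hookrightarrow A_\crys\subseteq B_\crys$ extends uniquely to a ring homomorphism $A_\inf[1/\mu]\to B_\crys$.

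The essential point is the divisibility in the first step. If one only used the divided power bound $\mu^{n-1}\in (n-1)!\,A_\crys$, the $n$-th term of the series for $t/\mu$ would fail to be integral precisely when $p\mid n$ (for instance $\mu^{p-1}/p$); that it nevertheless lies in $A_\crys$ rests on the exact equality of valuations $v^\flat(\overline{\widetilde{\mu}}^{\,p-1})=v^\flat(\overline{\xi})$ in the tilt, and converting this back into a statement over $A_\crys$ is exactly where $\xi^p/p!\in A_\crys$ is used.
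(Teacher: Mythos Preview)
Your proof is correct and follows essentially the same approach as the paper: expand $t/\mu$ as the series $\sum_{n\ge 1}(-1)^{n+1}\mu^{n-1}/n$, use the key divisibility $\mu^{p-1}\in pA_\crys$ to control the terms, and conclude invertibility from $t/\mu\in 1+\ker\theta$. The main difference is that you supply an explicit valuation argument in the tilt to \emph{prove} $\mu^{p-1}\in pA_\crys$ (the paper simply asserts it), and your convergence bookkeeping via $\mu^{m}\in p^{\lfloor m/(p-1)\rfloor}A_\crys$ is a bit more streamlined than the paper's factorization through $\tfrac{1}{a!}(\mu^{p-1}/p)^a$; one small point you leave implicit is that $\lfloor(n-1)/(p-1)\rfloor - v_p(n)\to\infty$ (needed for the terms to tend $p$-adically to $0$, not merely lie in $A_\crys$), but this is immediate from $v_p(n)\le\log_p n$.
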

\begin{remark}\label{rem:mu is invertible in Acrys}
The same argument shows $t/\mu\in (A_{\crys,\infty})^{\times}$. In fact, this is a stronger statement since $A_{\crys,\infty}$ is contained in $A_\crys$.
\end{remark}

\begin{proof}[Proof of {\cref{lem:mu is invertible in Acrys}}]
     This is due to \cite[Lemma 12.2]{BhattMorrowScholze} or \cite[Lemma 90 (2)]{Tsuji_simons}. We write the proof here for the convenience of the reader.
     We first show $t/\mu\in A_\crys$. Since
     \begin{align*}
           \frac{t}{\mu}=\sum_{i=1}^{\infty} (-1)^{i+1}\frac{\mu^{i-1}}{i}
      \end{align*}
 in $B_{\dR}^+$, it is enough to show that $\mu^{i-1}/(i!)$ is an element of $A_\crys$.
As $\mu^{p-1}\in pA_\crys$ and $\Ker (\theta\colon A_\crys  \rightarrow \mathcal{O}_{\mathbb{C}_p})$ is a divided power ideal, we have
      \begin{align*}
           \frac{1}{n!}\left(\frac{\mu^{p-1}}{p}\right)^n\in A_\crys
      \end{align*}
for any $n\in \mathbb{Z}_{\geq 0}$. Let $i-1=(p-1)m+r$ with $m\in \mathbb{Z}_{\geq 0}$ and $0\leq r\leq p-2$. Then
\begin{align*}
     \frac{\mu^{i-1}}{i!}=\frac{1}{m!}\left(\frac{\mu^{p-1}}{p}\right)^m\cdot \frac{m!p^m}{i!}\cdot \mu^r.
\end{align*}
So, it is enough to show that $v_p(i!)\leq v_p(m!)+m$. As $i=(p-1)m+r+1=pm+(r+1-m)\leq pm+(p-1)$, we have $v_p(i!)\leq v_p\bigl((pm+p-1)!\bigr)=v_p\bigl((pm)!\bigr)$. We prove an equality $v_p\bigl((pm)!\bigr)=v_p(m!)+m$ by induction on $m$. When $m=0$, it is clear. Assume we prove an equality $v_p\bigl((pm)!\bigr)=v_p(m!)+m$ for some $m\in \mathbb{Z}_{\geq 0}$. On the one hand, we have $v_p\bigl((p(m+1))!\bigr)=v_p\bigl(p(m+1)\bigr)+ v_p\bigl((pm)!\bigr)=1+v_p(m+1)+v_p\bigl((pm)!\bigr)$. On the other hand, we have $v_p\bigl((m+1)! \bigr)=v_p(m+1)+v_p(m!)$. Thus, the induction hypothesis implies $v_p\bigl((p(m+1))!\bigr)=v_p((m+1)!)+m+1$, as desired. Therefore, $t/\mu\in A_{\crys}$. More precisely, we see that $t/\mu\in 1+\Fil^1 A_\crys$ as $\Fil^1 A_\crys$ is $p$-adically complete and separated. Since $1+\Fil^1 A_\crys \subseteq A_\crys^{\times}$ as shown in the proof of \cref{lem:phi-module over A_inf to Bcrys}, this shows that $t/\mu\in A_\crys^\times$.
\end{proof}

\begin{theorem}\label{thm:relationship between a lot of conditions}
Let $N$ be a $(\varphi,\Gamma)$-module over $\AKp$. Then, we have the following implications of the conditions:
\[\begin{tikzcd}
\mathrm{(2-A)} \arrow[r,Rightarrow]\arrow[d,Rightarrow]\arrow[rr, Rightarrow, bend left=30]&\mathrm{(2-B)}\arrow[d,Rightarrow]&\mathrm{(2-C)}\arrow[d,Rightarrow]& \\ 
\mathrm{(2-a)} \arrow[r,Rightarrow]&\mathrm{(2-b)}\arrow[r,Rightarrow]&\mathrm{(2-c)}\arrow[r,Rightarrow]&T(N)\text{ is crystalline}
\end{tikzcd}\]
\end{theorem}
\begin{proof}
The vertical implications are clear since (2-a) (resp. (2-b), resp. (2-c)) is equivalent to the surjectivity of the homomorphism \eqref{eq:condition (2-A)} (resp. \eqref{eq:condition (2-B)}, resp. \eqref{eq:condition (2-C)}). The implication (2-a) $\Rightarrow$ (2-b) is also clear since $(N/[\a])^{\Gamma_K}_{\icrys}\subseteq (N/[\a])^{\Gamma_K}_{\fin}$. 
Assume (2-A) holds. We first show that it implies (2-B). By \cref{prop:Galois fix part,prop:phi-finite part and tensor product}, taking the $\Gamma_K$-fixed part and the $\varphi$-finite part of \eqref{eq:condition (2-A)} gives us an isomorphism
\begin{align*}
    (N/[\a])^{\Gamma_K}_{\icrys} \otimes_{W(k)} (\AKp/[\a])_{\fin}^{\Gamma_K}  \xrightarrow{\cong } (N/[\a])^{\Gamma_K}_{\fin}.
\end{align*}
This implies (2-B). 
We next deduce (2-C) from (2-A). By tensoring \eqref{eq:condition (2-A)} with $\overline{A}_\infty$, we obtain an isomorphism 
\begin{align*}
    (N/[\a])^{\Gamma_K}_{\icrys} \otimes_{W(k)}\overline{A}_\infty \xrightarrow{\cong} N/\mathfrak{q}_\infty. 
\end{align*}
Combined with \cref{prop:Galois fix part,prop:phi-finite part and tensor product,rem:G-fixed part of Bbar}, it implies (2-C).
Assume (2-b) holds. Then the homomorphism \eqref{eq:condition (2-B)} is surjective. By tensoring \eqref{eq:condition (2-B)} with $\overline{A}_\infty$, we obtain a surjectivity of the top horizontal arrow in the following commutative diagram:
\[\begin{tikzcd}
(N/[\a])_{\fin}^{\Gamma_K} \otimes_{(\AKp/[\a])_{\fin}^{\Gamma_K}} \overline{A}_\infty  \arrow[r,->>] \arrow[d]& N/\mathfrak{q}_\infty\arrow[d,equal]\\
(N/\mathfrak{q}_\infty)^{\Gamma_K}_{\fin} \otimes_{W(k)} \overline{A}_{\infty} \arrow[r]& N/\mathfrak{q}_\infty.
\end{tikzcd}\]
Hence, the bottom arrow is also surjective, which implies (2-c).

It remains to show that (2-c) implies that $T(N)$ is crystalline. 
By \cref{lem:phi finite part as W(k) and W(l)}, we have a commutative diagram
\[\begin{tikzcd}
(N/\mathfrak{q}_\infty)^{\Gamma_K}_{\fin,W(k)}\otimes_{W(k)} \overline{B}\arrow[r,->>]\arrow[d]& N/\mathfrak{q}_\infty \otimes_{\overline{A}_\infty}\overline{B}\arrow[d,equal]\\ 
((N\otimes_{\AKp}\overline{B})_{\fin,W(\overline{k})})^{G_K}\otimes_{K_0} \overline{B} \arrow[r]& N\otimes_{\AKp}\overline{B}.
\end{tikzcd}\]
Hence, the condition (2-c) implies that $N \otimes_{\AKp}\overline{B}$ is generated by $((N\otimes_{\AKp}\overline{B})_{\fin,W(\overline{k})})^{G_K}$ as a $\overline{B}$-module. 
On the one hand, by \cref{lem:phi-module over A_inf to Bcrys}, $N\otimes_{\AKp} B_\crys^+$ is a finite height $\varphi$-module, and by \cref{thm:phi-finite part of B_crys and Bbar} we have a canonical isomorphism $(N\otimes_{\AKp}B_\crys^+)_{\fin,W(\overline{k})} \xrightarrow{\cong} (N \otimes_{\AKp}\overline{B})_{\fin,W(\overline{k})}$. By taking the $G_K$-fixed part, we obtain an isomorphism
\begin{align}\label{eq:isomorphism between B_crys and Bbar}
    ((N\otimes_{\AKp}B_\crys^+)_{\fin,W(\overline{k})})^{G_K} \xrightarrow{\cong} ((N \otimes_{\AKp}\overline{B})_{\fin,W(\overline{k})})^{G_K}.
\end{align}
On the other hand, by tensoring \eqref{eq:isomorphism between N and T after inverting mu} with $B_\crys$, which is possible by virtue of \cref{lem:mu is invertible in Acrys}, we obtain an isomorphism $N \otimes_{\AKp}B_\crys \cong T(N) \otimes_{\mathbb{Z}_p} B_\crys$. By taking the $G_K$-fixed part, we obtain
\begin{align*}
    (N\otimes_{\AKp}B_\crys^+)^{G_K} \subseteq  (N \otimes_{\AKp} B_\crys)^{G_K} \cong D_\crys(T(N)[1/p]).
\end{align*}
As the rightmost term is a $K_0$-vector space of dimension at most $d:=\rank_{\mathbb{Z}_p} T(N)$, $(N\otimes_{\AKp}B_\crys^+)^{G_K}$ is a $K_0$-vector space of dimension at most $d$. It implies that the leftmost term of \eqref{eq:isomorphism between B_crys and Bbar} is equal to $(N \otimes_{\AKp}B_\crys^+)^{G_K}$ and it is a $K_0$-vector space of dimension at most $d$. Since it generates the $\overline{B}$-module $N\otimes_{\AKp} \overline{B}$, which is free of rank $d$ by \cref{cor:well-definedness as p-adic rep}, we see that $\dim_{K_0} (N\otimes_{\AKp}B_\crys^+)^{G_K} = d$. As a consequence, $\dim_{K_0} D_\crys(T(N)[1/p])=d$, as desired.
\end{proof}

\begin{remark}\label{rem:relationship between a lot of conditions}
     From free crystalline $\mathbb{Z}_p$-representations, we will construct crystalline $(\varphi,\Gamma)$-modules satisfying the condition (2-A). Therefore, these conditions are equivalent for $(\varphi,\Gamma)$-modules over $\AKp$ whose $\Gamma_K$-action is trivial modulo $\mu$.
\end{remark}

\subsection{Some properties of crystalline $(\varphi,\Gamma)$-modules over $\AKp$}

%We can think of Tate twists of crystalline $(\varphi,\Gamma)$-modules over $\AKp$. The following proof is due to T. Tsuji.

\begin{proposition}\label{prop:tensor product}
Let $N_1$ and $N_2$ be crystalline $(\varphi,\Gamma)$-modules over $\AKp$. We define a Frobenius map on $N_1 \otimes_{\AKp} N_2$ by
\begin{align*}
    (N_1 \otimes_{\AKp}N_2)\otimes_{\AKp,\varphi} \AKp[1/\varphi(\xi)] &\cong (N_1\otimes_{\AKp,\varphi} \AKp[1/\varphi(\xi)]) \otimes_{\AKp[1/\varphi(\xi)]} (N_2\otimes_{\AKp,\varphi} \AKp[1/\varphi(\xi)])\\
     &\xrightarrow[\Phi_{N_1}\otimes \Phi_{N_2}]{\cong} N_1[1/\varphi(\xi)] \otimes_{\AKp[1/\varphi(\xi)]} N_2[1/\varphi(\xi)]
\end{align*}
and a $\Gamma_K$-action on it by $g \otimes g$ for any $g\in \Gamma_K$. Then, $N_1 \otimes_{\AKp} N_2$ is a crystalline $(\varphi,\Gamma)$-module over $\AKp$ that corresponds to the free $\mathbb{Z}_p$-representation $T(N_1) \otimes_{\mathbb{Z}_p} T(N_2)$. 
\end{proposition}
\begin{proof}
It is easy to see that $N_1 \otimes_{\AKp} N_2$ is finite free over $\AKp$ and that the Frobenius map is a $\Gamma_K$-equivariant isomorphism. We have an isomorphism
\begin{align*}
    (N_1 \otimes_{\AKp}N_2)/\mu &\cong N_1/\mu \otimes_{\AKp/\mu} N_2/\mu\\
     &\xleftarrow{\cong}  \bigl((N_1/\mu)^{\Gamma_K} \otimes_{(\AKp/\mu)^{\Gamma_K}} \AKp/\mu\bigr) \otimes_{\AKp/\mu} ((N_2/\mu)^{\Gamma_K}\otimes_{(\AKp/\mu)^{\Gamma_K}} \AKp/\mu)\\
     &\cong \bigl((N_1/\mu)^{\Gamma_K} \otimes_{(\AKp/\mu)^{\Gamma_K}} (N_2/\mu)^{\Gamma_K}\bigr) \otimes_{(\AKp/\mu)^{\Gamma_K}} \AKp/\mu.
\end{align*}
By \cref{prop:Galois fix part}, we see that the $\Gamma_K$-action on $N_1 \otimes_{\AKp} N_2$ is trivial modulo $\mu$. Combining them with \cref{prop:automatic continuity}, we see that $N_1 \otimes_{\AKp} N_2$ is a $(\varphi,\Gamma)$-module over $\AKp$. By \cref{thm:relationship between a lot of conditions}, crystalline $(\varphi,\Gamma)$-modules satisfy the condition (2-c) in \cref{def:crystalline (phi Gamma)-modules}. Hence, we have a surjective homomorphism
\begin{align*}
    (N_1 \otimes_{\AKp}N_2)/\mathfrak{q}_{\infty}&\cong N_1/\mathfrak{q}_\infty \otimes_{\overline{A}_\infty} N_2/\mathfrak{q}_\infty\\
    &\twoheadleftarrow  \bigl((N_1/\mathfrak{q}_\infty)^{\Gamma_K}_{\fin} \otimes_{W(k)} \overline{A}_\infty\bigr) \otimes_{\overline{A}_\infty} ((N_2/\mathfrak{q}_\infty)^{\Gamma_K}_{\fin}\otimes_{W(k)} \overline{A}_\infty)\\
     &\cong \bigl((N_1/\mathfrak{q}_\infty)^{\Gamma_K}_{\fin} \otimes_{W(k)} (N_2/\mathfrak{q}_\infty)^{\Gamma_K}_{\fin}\bigr) \otimes_{W(k)} \overline{A}_\infty.
\end{align*}
Since it factors through $((N_1 \otimes_{\AKp} N_2)/\mathfrak{q}_\infty)^{\Gamma_K}_{\fin}\otimes_{W(k)} \overline{A}_\infty$, we see that $N_1 \otimes_{\AKp}N_2$ satisfies the condition (2-c) in \cref{def:crystalline (phi Gamma)-modules}. Therefore, it is a crystalline $(\varphi,\Gamma)$-module over $\AKp$. By using the isomorphism \eqref{eq:isomorphism of Fontaine}, we see that the corresponding free $\mathbb{Z}_p$-representation is
\begin{align*}
    T(N_1 \otimes_{\AKp}N_2)&=\bigl((N_1 \otimes_{\AKp}N_2)\otimes_{\AKp} W(\mathbb{C}_p^\flat)\bigr)^{\varphi=1} \\
    &\cong \Bigl(\bigl(N_1 \otimes_{\AKp} W(\mathbb{C}_p^\flat)\bigr) \otimes_{W(\mathbb{C}_p^\flat)} \bigl(N_2 \otimes_{\AKp} W(\mathbb{C}_p^\flat)\bigr)\Bigr)^{\varphi=1}\\
    &\cong \Bigl(\bigl(T(N_1)\otimes_{\mathbb{Z}_p}W(\mathbb{C}_p^\flat)\bigr)\otimes_{W(\mathbb{C}_p^\flat)} \bigl(T(N_2)\otimes_{\mathbb{Z}_p}W(\mathbb{C}_p^\flat)\bigr)\Bigr)^{\varphi=1}\\
    &\cong \Bigl(\bigl(T(N_1) \otimes_{\mathbb{Z}_p} T(N_2)\bigr) \otimes_{\mathbb{Z}_p} W(\mathbb{C}_p^\flat)\Bigr)^{\varphi=1} \\
    &\cong T(N_1) \otimes_{\mathbb{Z}_p} T(N_2).
\end{align*}
\end{proof}

\begin{example}\label{ex:Tate twist}
Let $n$ be an integer. Then, $\mu^n \AKp(-n)$ equipped with the $\Gamma_K$-action and the Frobenius endomorphism induced by those of $\AKp$ is a crystalline $(\varphi,\Gamma)$-module over $\AKp$ that corresponds to $\mathbb{Z}_p(-n)$. We show this. Since $\AKp$ is $\mu$-torsion free, $\mu^n \AKp(-n)$ is finite free of rank $1$. 
The bijectivity of the Frobenius map follows from a calculation $\varphi(\mu^n x)=\varphi(\mu)^n \varphi(x)=\mu^n\cdot \varphi(\xi)^n \varphi(x)$. By \cref{lem:Tsuji Lemma 75}, we have a $\Gamma_K$-equivariant isomorphism
     \begin{align*}
          \AKp/\mu \cong \mu^n\AKp(-n)/\mu^{n+1}\AKp(-n),\qquad x\mapsto \mu^n x.
     \end{align*}
It follows that the $\Gamma_K$-action on $\mu^n\AKp(-n)$ is trivial modulo $\mu$. Combining them with \cref{prop:automatic continuity}, we conclude that $\mu^n\AKp(-n)$ is a $(\varphi,\Gamma)$-module over $\AKp$ whose $\Gamma_K$-action is trivial modulo $\mu$.
The corresponding $\mathbb{Z}_p$-representation is 
     \begin{align*}
          \Bigl(\mu^n \AKp(-n) \otimes_{\AKp}W(\mathbb{C}_p^\flat)\Bigr)^{\varphi=1}=\Bigr(\AKp\otimes_{\AKp}W(\mathbb{C}_p^\flat)\Bigl)^{\varphi=1}(-n)=\mathbb{Z}_p(-n).
     \end{align*}
     It remains to show that $\mu^n\AKp(-n)$ satisfies \cref{def:crystalline (phi Gamma)-modules} (2-C). The following proof is due to Tsuji. Recall that we constructed a canonical surjection $A_{\crys,\infty} \twoheadrightarrow \overline{A}_\infty$ in \cref{prop:Abar and Acrys}. By \cref{rem:mu is invertible in Acrys}, we have a homomorphism
     \begin{align*}
          t^n A_{\crys, \infty}(-n) =\mu^nA_{\crys,\infty}(-n) \twoheadrightarrow \mu^n \overline{A}_\infty(-n)
     \end{align*}
     which is compatible with the Frobenii and the $\Gamma_K$-actions. 
     Let $e\in \mu^n\overline{A}_\infty(-n)$ be the image of $t^n$ under the above map. 
     \cref{lem:Abar is mu-torsion free} implies that $e$ is a basis of $\mu^n \overline{A}_\infty(-n)$ over $\overline{A}_\infty$ and $e\in \bigl(\mu^n \overline{A}_\infty(-n)\bigr)_\fin^{\Gamma_K}$.
      Consider the $\Gamma_K$-equivariant $\overline{A}_\infty$-linear isomorphism
     \begin{align*}
          \overline{A}_\infty  \xrightarrow{\cong } \mu^n \overline{A}_\infty(-n),\qquad a\mapsto ae.
     \end{align*}
     Taking the $\Gamma_K$-fixed part gives us an isomorphism $(\overline{A}_\infty)^{\Gamma_K} \xrightarrow{\cong } \bigl(\mu^n \overline{A}_\infty(-n)\bigr)^{\Gamma_K} $. Since $\varphi(e)=p^n e$, one can show that for any $a\in \overline{A}_\infty$, $a\in (\overline{A}_\infty)_\fin$ if and only if $ae\in \bigl(\mu^n \overline{A}_\infty(-n)\bigr)_\fin$. Thus we obtain a $W(k)$-linear isomorphism
     \begin{align*}
          W(k)\cong (\overline{A}_\infty)^{\Gamma_K}_\fin  \xrightarrow{\cong } \bigl(\mu^n \overline{A}_\infty(-n)\bigr)_\fin^{\Gamma_K}
     \end{align*}
     and we see that the canonical map $\bigl(\mu^n\overline{A}_\infty(-n)\bigr)_\fin^{\Gamma_K} \otimes_{W(k)} \overline{A}_\infty  \rightarrow \mu^n \overline{A}_\infty(-n)$ is an isomorphism. This completes the proof.
\end{example}

\begin{corollary}\label{cor:Tate twist}
Let $N$ be a crystalline $(\varphi,\Gamma)$-module over $\AKp$ and let $n$ be an integer. Then, $\mu^n N(-n)$ is a crystalline $(\varphi,\Gamma)$-module over $\AKp$ that corresponds to the free $\mathbb{Z}_p$-representation $T(N)(-n)$.
\end{corollary}
\begin{proof}
This is a consequence of \cref{prop:tensor product,ex:Tate twist}.
\end{proof}

\begin{proposition}\label{prop:scalar extension of crystalline phi gamma module}
Let $N$ be a crystalline $(\varphi,\Gamma)$-module over $\AKp$. Let $L$ be a finite extension of $K$ inside $\overline{K}$. Let $L_\infty$ denote the $p$-cyclotomic extension $\bigcup_{n=1}^\infty L(\zeta_{p^n})$ of $L$. We define $G_L:=\Gal(\overline{K}/L)$, $\Gamma_L := \Gal(L_\infty/L)$, and $\ALp:=W(\mathcal{O}_{\widehat{L_\infty}}^\flat)$. Note that $N$ has a $\Gamma_L$-action via the homomorphism $\Gamma_L \rightarrow \Gal(K_\infty/(L\cap K_\infty))\subseteq \Gamma_K$. Then, $N\otimes_{\AKp} \ALp$ equipped with the canonical $\Gamma_L$-action and the Frobenius map is a crystalline $(\varphi,\Gamma)$-module over $\ALp$ that corresponds to the free $\mathbb{Z}_p$-representation $\left.T(N)\right|_{G_L}$.
\end{proposition}
\begin{proof}
By \cref{prop:scalar extension of phi-modules} (3), $N \otimes_{\AKp}\ALp$ is a $(\varphi,\Gamma)$-module over $\ALp$. Then, we have an isomorphism
\begin{align*}
    (N\otimes_{\AKp}\ALp)/\mu &\cong N/\mu \otimes_{\AKp/\mu} \ALp/\mu\\
     &\xleftarrow{\cong}  \bigl((N/\mu)^{\Gamma_K} \otimes_{(\AKp/\mu)^{\Gamma_K}} \AKp/\mu\bigr) \otimes_{\AKp/\mu} \ALp/\mu\\
     &\cong (N/\mu)^{\Gamma_K} \otimes_{(\AKp/\mu)^{\Gamma_K}} \ALp/\mu.
\end{align*}
Combining this with \cref{prop:Galois fix part}, we see that the $\Gamma_L$-action on $N\otimes_{\AKp}\ALp$ is trivial modulo $\mu$. 
By \cref{thm:relationship between a lot of conditions}, crystalline $(\varphi,\Gamma)$-modules satisfy the condition (2-c) in \cref{def:crystalline (phi Gamma)-modules}.
We set $\mathfrak{q}_{\infty,L}=\{[a]x\mid a\in \mathfrak{m}_{\widehat{L_\infty}}^\flat, x\in \ALp\}$. Then, we have a surjective homomorphism
\begin{align*}
    (N\otimes_{\AKp}\ALp)/\mathfrak{q}_{\infty,L} &\cong N/\mathfrak{q}_\infty \otimes_{\overline{A}_\infty} \ALp/\mathfrak{q}_{\infty,L}\\
     &\twoheadleftarrow  \bigl((N/\mathfrak{q}_\infty)^{\Gamma_K}_{\fin,W(k)} \otimes_{W(k)} \overline{A}_\infty\bigr) \otimes_{\overline{A}_\infty} \ALp/\mathfrak{q}_{\infty,L}\\
     &\cong (N/\mathfrak{q}_\infty)^{\Gamma_K}_{\fin,W(k)} \otimes_{W(k)} \ALp/\mathfrak{q}_{\infty,L}.
\end{align*}
Since it factors through $((N\otimes_{\AKp}\ALp)/\mathfrak{q}_{\infty,L})^{\Gamma_L}_{\fin, W(l)}\otimes_{W(l)} \ALp/\mathfrak{q}_{\infty,L}$ where $l$ denotes the residue field of $L$, this implies that $N\otimes_{\AKp}\ALp$ satisfies the condition (2-c) in \cref{def:crystalline (phi Gamma)-modules}. Therefore, it is a crystalline $(\varphi,\Gamma)$-module over $\ALp$. It is easy to see that the corresponding free $\mathbb{Z}_p$-representation is $\left.T(N)\right|_{G_L}$.
\end{proof}

\subsection{Summary of prisms}
In order to construct a quasi-inverse functor of $T\colon \Mod_{\varphi,\Gamma}^{\fh, \crys}(\AKp) \rightarrow \Rep_{\mathbb{Z}_p}^\crys(G_K)$, we use the theory of prisms. We recall some facts on prisms. For details, see \cite{BhattScholze2022,BhattScholze2023}. 

\begin{definition}[{\cite[Definition 2.1, Definition 3.2]{BhattScholze2022}}]\label{def:delta ring}
A {\em$\delta$-ring} is a pair $(A, \delta)$ where $A$ is a $\mathbb{Z}_{(p)}$-algebra and $\delta\colon A \rightarrow A$ is a map of sets satisfying the following conditions:
\begin{align*}
     \delta(0)&=\delta(1)=0,\\
     \delta(xy)&=x^p \delta(y)+y^p \delta(x)+p \delta(x) \delta(y),\\
     \delta(x+y)&= \delta(x)+ \delta(y)-\sum_{i=1}^{p-1}\frac{1}{p}\begin{pmatrix}
     p\\ 
     i
     \end{pmatrix}x^i y^{p-i}.
\end{align*}
A \emph{morphism of $\delta$-rings} is a ring homomorphism of underlying rings that is compatible with $\delta$.
A {\em$\delta$-pair $(A, \delta,I)$} is a pair of a $\delta$-ring $(A,\delta)$ and an ideal $I$ of $A$. A {\em morphism of $\delta$-pairs} $f\colon (A,\delta_A,I) \rightarrow (B,\delta_B,J)$ is a morphism of $\delta$-rings satisfying $f(I)\subseteq J$.
\end{definition}

For any $\delta$-ring  $(A,\delta)$, the map 
\begin{align*}
     \varphi\colon A \rightarrow A,\qquad \varphi(x):=x^p+p \delta(x)
\end{align*}
is a Frobenius lift, i.e., $\varphi$ is a ring homomorphism satisfying $\varphi(x)\equiv x^p\bmod p$. If $A$ is a $p$-torsion free $\mathbb{Z}_{(p)}$-algebra, then every Frobenius lift $\varphi\colon A \rightarrow A$ comes from a unique $\delta$-structure on $A$ (\cite[Remark 2.2]{BhattScholze2022}). In the following, we often omit $\delta$ from the notation. 

\begin{definition}[{\cite[Definition 3.2]{BhattScholze2022}}]\label{def:prism}
A {\em bounded prism $(A,I)$} is a $\delta$-pair $(A,I)$ satisfying the following conditions:
\begin{enumerate}
     \item $I$ is an invertible ideal.
     \item $A$ is $(p,I)$-adically complete and separated.
     \item $p\in I+\varphi(I)A$ where $\varphi$ is the induced Frobenius lift.
     \item $A/I$ has bounded $p^{\infty}$-torsion, i.e., $(A/I)[p^\infty]=(A/I)[p^N]$ for some $N\gg 0$. Here, we define $(A/I)[p^\infty]$ and $(A/I)[p^N]$ by
     \begin{align*}
          (A/I)[p^\infty]&:=\{x\in A/I\mid p^nx=0 \text{ for some } n\in \mathbb{Z}_{\geq 0}\},\\
          (A/I)[p^N]&:=\{x\in A/I\mid p^Nx=0\}.
     \end{align*} 
\end{enumerate}
\end{definition}

\begin{example}\label{ex:(W(k) p)}
For any $p$-adically complete and separated and $p$-torsion free ring $A$ equipped with a Frobenius lift, $(A,p)$ is a bounded prism. In particular, $(W(k),p)$ is a bounded prism with $\delta$-structure induced by the Frobenius map of Witt vectors. 
\end{example}

\begin{example}\label{ex:(AKp xi)}
$(\AKp, \xi)$ is a bounded prism with $\delta$-structure induced by the Frobenius map of Witt vectors. Let us show this. Since $\AKp$ and $\AKp/\xi\cong \mathcal{O}_{\Kinf}$ are integral domains, the conditions (1) and (4) are satisfied. \cref{lem:Ainf is a local ring} implies the condition (2). Since
\begin{align*}
     \xi:=\mu/\varphi^{-1}(\mu)=1+[\epsilon]^{1/p}+\dots +[\epsilon]^{(p-1)/p}=p+\varphi^{-1}(\mu)y
\end{align*}
for some $y\in \AKp$, $p=\varphi(\xi)-\mu \varphi(y)\in (\xi, \varphi(\xi))\AKp$, so the condition (3) is satisfied. 
Thus, $(\AKp, \xi)$ is a bounded prism. Similar argument shows that $(\AKp, \varphi(\xi))$ is also a bounded prism and the Frobenius endomorphism of $\AKp$ induces an isomorphism $(\AKp,\xi) \xrightarrow[\cong ]{\varphi} (\AKp,\varphi(\xi))$ as prisms.
\end{example}

\begin{definition}[{\cite[Definition 2.3]{BhattScholze2023}}]\label{def:prismatic site}
Let $\mathcal{O}_{K,\Prism}$ denote the opposite category of bounded prisms $(A,I)$ equipped with an $\mathcal{O}_K$-algebra structure $\mathcal{O}_K \rightarrow A/I$. A morphism $(B,J) \rightarrow (A,I)$ in $\mathcal{O}_{K,\Prism}$ is a morphism of prisms $(A,I) \rightarrow (B,J)$ such that $A/I \rightarrow B/J$ is an $\mathcal{O}_K$-algebra homomorphism.

We endow $\mathcal{O}_{K,\Prism}$ with a faithfully flat topology, i.e., a cover is a morphism $(B,J) \rightarrow (A,I)$ of $\mathcal{O}_{K,\Prism}$ such that the underlying ring homomorphism $A \rightarrow B$ is $(p,I)$-completely faithfully flat. We call the site $\mathcal{O}_{K,\Prism}$ the {\em absolute prismatic site over $\mathcal{O}_K$}.
\end{definition}

\begin{definition}[{\cite[Proposition 2.7, Definition 4.1]{BhattScholze2023}}]\label{def:prismatic crystal}
Let $\mathcal{O}_\Prism$ (resp. $\mathcal{O}_\Prism[1/\mathcal{I}_\Prism]^{\wedge}_p$) denote the sheaf on $\mathcal{O}_{K,\Prism}$ defined by $(A,I)\mapsto A$ (resp. $(A,I)\mapsto  A[1/I]^{\wedge}_p$). A {\em prismatic $F$-crystal in $\mathcal{O}_\Prism$-modules (resp. $\mathcal{O}_\Prism[1/\mathcal{I}_\Prism]^{\wedge}_p$-modules) on $\mathcal{O}_{K,\Prism}$} is an $\mathcal{O}_\Prism$-module (resp. $\mathcal{O}_\Prism[1/\mathcal{I}_\Prism]^{\wedge}_p$-module) $\mathcal{M}$ on $\mathcal{O}_{K,\Prism}$ equipped with an $\mathcal{O}_\Prism[1/\mathcal{I}_\Prism]$-linear (resp. $\mathcal{O}_\Prism[1/\mathcal{I}_\Prism]^{\wedge}_p$-linear) isomorphism $\varPhi_{\mathcal{M}}\colon (\varphi^\ast \mathcal{M})[1/\mathcal{I}_\Prism]\cong \mathcal{M}[1/\mathcal{I}_\Prism]$ satisfying the following conditions:
\begin{enumerate}
     \item For any object $(A,I)$ in $\mathcal{O}_{K,\Prism}$, $\mathcal{M}(A,I)$ is a finite projective $A$-module (resp. finite projective $A[1/I]^\wedge_p$-module).
     \item (Crystal property) For any morphism $(B,J) \rightarrow (A,I)$ in $\mathcal{O}_{K,\Prism}$, the canonical $B$-linear (resp. $B[1/J]^\wedge_p$-linear) homomorphism
     \begin{align*}
          \mathcal{M}(A,I) \otimes_{A} B  \rightarrow \mathcal{M}(B,J)\\
          (\text{resp. }\mathcal{M}(A,I) \otimes_{A[1/I]^\wedge_p} B[1/J]^\wedge_p  \rightarrow \mathcal{M}(B,J))
     \end{align*}
     is an isomorphism.
\end{enumerate}
Let $\Vect^\varphi(\mathcal{O}_{K,\Prism}, \mathcal{O}_\Prism)$ (resp. $\Vect^\varphi(\mathcal{O}_{K,\Prism}, \mathcal{O}_\Prism[1/\mathcal{I}_\Prism]^{\wedge}_p)$ ) denote the category of prismatic $F$-crystals in $\mathcal{O}_\Prism$-modules (resp. $\mathcal{O}_{\Prism}[1/\mathcal{I}_\Prism]^{\wedge}_p$-modules) on $\mathcal{O}_{K,\Prism}$.
\end{definition}

Z. Wu proved the following theorem which states the relationship between prismatic $F$-crystals in $\mathcal{O}_{\Prism}[1/\mathcal{I}_\Prism]^{\wedge}_p$-modules on $\mathcal{O}_{K,\Prism}$ and $(\varphi,\Gamma)$-modules over $\AK$.

\begin{theorem}[{\cite[Theorem 5.2]{Wu2021}}]\label{thm:prismatic F-crystal and p-adic rep}
     Let $(\AKp, \varphi(\xi))$ denote the object of $\mathcal{O}_{K,\Prism}$ with $\mathcal{O}_K$-algebra structure 
     \begin{align*}
          \mathcal{O}_K \hookrightarrow  \mathcal{O}_{\Kinf} \xleftarrow[\cong ]{\theta } \AKp/\xi  \xrightarrow[\cong ]{\varphi} \AKp/\varphi(\xi).
     \end{align*}
     Then, there exists an equivalence of categories between the category of prismatic $F$-crystals in $\mathcal{O}_{\Prism}[1/\mathcal{I}_\Prism]^{\wedge}_p$-modules on $\mathcal{O}_{K,\Prism}$ and the category of $(\varphi,\Gamma)$-modules over $\AK$ via 
     \begin{align*}
          \Vect^\varphi(\mathcal{O}_{K,\Prism},\mathcal{O}_\Prism[1/\mathcal{I}_\Prism]^\wedge_p) \rightarrow \Mod_{\varphi,\Gamma}^{\et}(\AK),\qquad \mathcal{M} \mapsto \mathcal{M} (\AKp, \varphi(\xi)).
     \end{align*}
     Here, the $\Gamma_K$-action on $\mathcal{M}(\AKp, \varphi(\xi))$ is defined as follows. 
     The $\Gamma_K$-action on $\AKp$ commutes with the Frobenius map of $\AKp$ and preserves $\varphi(\xi)\AKp$. Also for any $g\in \Gamma_K$, the ring homomorphism $g\colon \AKp/\varphi(\xi) \rightarrow \AKp/\varphi(\xi)$ is a morphism of $\mathcal{O}_K$-algebras. Therefore, $g\colon (\AKp, \varphi(\xi)) \rightarrow (\AKp, \varphi(\xi))$ defines a morphism in $\mathcal{O}_{K,\Prism}$, which induces a semi-linear and continuous $\Gamma_K$-action on $\mathcal{M}(\AKp,\varphi(\xi))$ that commutes with the Frobenius map.
\end{theorem}

\begin{remark}
     \cite[Lemma 2.5]{Wu2021} does not hold; see \cite{WatanabeAKp}. However, Wu's method can be applied by using the prism $(\AKp,\varphi(\xi))$ instead of $(\mathbb{A}_K^+, \phi^n([p]_q))$ and the above theorem can be proved by a similar argument.
\end{remark}

Also, B. Bhatt and P. Scholze discovered the relationship between prismatic $F$-crystals in $\mathcal{O}_\Prism$-modules on $\mathcal{O}_{K,\Prism}$ and free crystalline $\mathbb{Z}_p$-representations of $G_K$ as follows:

\begin{theorem}[{\cite[Theorem 5.6]{BhattScholze2023}}]\label{thm:prismatic F-crystal and crystalline rep}
There exists an equivalence of categories between the category of prismatic $F$-crystals in $\mathcal{O}_\Prism$-modules on $\mathcal{O}_{K,\Prism}$ and the category of free crystalline $\mathbb{Z}_p$-representations of $G_K$ via 
\begin{align*}
     \Vect^\varphi(\mathcal{O}_{K,\Prism}, \mathcal{O}_\Prism) \rightarrow \Rep_{\mathbb{Z}_p}^{\crys}(G_K),\qquad \mathcal{N} \mapsto \Bigl(\mathcal{N}(\AKp, \varphi(\xi))\otimes_{\AKp} W(\mathbb{C}_p^\flat)\Bigr)^{\varphi=1}.
\end{align*}
\end{theorem}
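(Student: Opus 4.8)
Since this is \cite[Theorem 5.6]{BhattScholze2023}, which I take as a black box input for the rest of the paper, I will only sketch how one proves it. The functor is the \emph{\'etale realization}: for $\mathcal{N}\in\Vect^\varphi(\mathcal{O}_{K,\Prism},\mathcal{O}_\Prism)$ the evaluation $\mathcal{N}(\AKp,\varphi(\xi))$ is a finite projective $\AKp$-module whose Frobenius is an isomorphism after inverting $\varphi(\xi)$, and it carries a semilinear $\Gamma_K$-action by \cref{rem:Gamma-action defines prism hom} together with the crystal property; base changing to $\AK$ produces an \'etale $(\varphi,\Gamma)$-module, so $(\mathcal{N}(\AKp,\varphi(\xi))\otimes_{\AKp}W(\mathbb{C}_p^\flat))^{\varphi=1}$ is a free $\mathbb{Z}_p$-representation of $G_K$ of the expected rank by \cref{thm:(phi Gamma) modules}. (Equivalently one may invoke \cref{thm:prismatic F-crystal and p-adic rep}, which already gives a well-defined fully faithful functor to $\Rep_{\mathbb{Z}_p}(G_K)$ after inverting $\mathcal{I}_\Prism$ and $p$-completing.) The content of the theorem is then that this functor is fully faithful with essential image exactly $\Rep_{\mathbb{Z}_p}^{\crys}(G_K)$.

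The key geometric input is that the absolute prismatic site $\mathcal{O}_{K,\Prism}$ admits the Breuil--Kisin prism $(\mathfrak{S},E)$ --- with $\mathfrak{S}:=W(k)[[u]]$, $u\mapsto\pi$, and $E=E(u)$ the minimal polynomial of $\pi$ over $K_0$ --- as a cover of the final object, so that a prismatic $F$-crystal $\mathcal{N}$ is the same datum as the Breuil--Kisin module $\mathfrak{M}:=\mathcal{N}(\mathfrak{S},E)$ (finite free over $\mathfrak{S}$, Frobenius an isomorphism away from $E$) equipped with a descent datum along the prismatic self-products $(\mathfrak{S}^{(\bullet)},I^{(\bullet)})$. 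Base changing this descent datum along $(\mathfrak{S},E)\to(A_\inf,\xi)$ recovers the $G_K$-action on $\mathfrak{M}\otimes_{\mathfrak{S}}A_\inf\cong T(\mathcal{N})\otimes_{\mathbb{Z}_p}A_\inf$, and since a Breuil--Kisin module is determined inside its associated \'etale $\varphi$-module (Kisin's theory) together with \cref{thm:(phi Gamma) modules}, full faithfulness follows by tracking these identifications.

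For the essential image, one direction is that a prismatic $F$-crystal $\mathcal{N}$ yields a crystalline $T(\mathcal{N})$: here $\mathfrak{M}$ is of finite $E$-height, and the prismatic crystal structure supplies exactly the extra rigidity --- the $G_K$-descent and, through the de Rham and Hodge--Tate specializations of $\mathcal{N}$, the filtration and the Hodge--Tate weights --- that matches Kisin's classification of crystalline $\mathbb{Z}_p$-lattices. The converse, that every $T\in\Rep_{\mathbb{Z}_p}^{\crys}(G_K)$ arises, starts from Kisin's Breuil--Kisin module $\mathfrak{M}(T)$ of finite $E$-height, whose $\mathcal{O}_{\mathcal{E}}$-base change is the \'etale $\varphi$-module of $T$, and asks to promote $\mathfrak{M}(T)$ to a prismatic $F$-crystal, i.e.\ to supply compatible crystal isomorphisms over every object of $\mathcal{O}_{K,\Prism}$.

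The main obstacle is precisely this last promotion: descending the crystal structure of $\mathfrak{M}(T)$ to the prismatic self-product $(\mathfrak{S}^{(1)},I^{(1)})$ and checking the cocycle condition over $(\mathfrak{S}^{(2)},I^{(2)})$. Over $A_\inf$ one already has the $G_K$-equivariant isomorphism $\mathfrak{M}(T)\otimes_{\mathfrak{S}}A_\inf\cong T\otimes_{\mathbb{Z}_p}A_\inf$, so the two pullbacks of $\mathfrak{M}(T)$ along $\mathfrak{S}\rightrightarrows\mathfrak{S}^{(1)}$ both become $T\otimes_{\mathbb{Z}_p}(-)$ after a further base change and are canonically identified there; the real work is to show this identification already lives over $\mathfrak{S}^{(1)}$ itself. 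Bhatt and Scholze carry this out by computing $\mathfrak{S}^{(1)}$ as an explicit prismatic envelope and running a faithfully flat descent, and it is exactly at this step that the crystalline (as opposed to merely semistable or de Rham) hypothesis on $T$ is used --- via the analysis of $D_{\crys}(T[1/p])$ and the interpolation of Frobenius across the divisor $E=0$. Everything else (finite projectivity at a general $(A,I)$, compatibility of $\varphi$, and that the two constructions are mutually quasi-inverse) is then formal given \cref{thm:prismatic F-crystal and p-adic rep} and Kisin's classification.
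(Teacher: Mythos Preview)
The paper does not prove this theorem at all: it is stated with a citation to \cite[Theorem 5.6]{BhattScholze2023} and used purely as a black box input (specifically, to construct the quasi-inverse functor in \cref{thm:Main theorem for crystalline}). You correctly recognize this at the outset, so there is nothing to compare against. Your sketch of the Bhatt--Scholze argument via the Breuil--Kisin cover $(\mathfrak{S},E)$, descent to the self-products $\mathfrak{S}^{(\bullet)}$, and the link to Kisin's classification is a reasonable high-level summary of their strategy, but it goes well beyond what the present paper provides or requires.
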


\subsection{Proof of the main theorem}
To construct a quasi-inverse functor of $T\colon \Mod_{\varphi,\Gamma}^{\fh, \crys}(\AKp) \rightarrow \Rep_{\mathbb{Z}_p}^\crys(G_K)$, we define a functor from $\Vect^\varphi(\mathcal{O}_{K,\Prism}, \mathcal{O}_\Prism)$ to $\Mod_{\varphi,\Gamma}^{\fh,\crys}(\AKp)$ by evaluating at $(\AKp,\varphi(\xi))$. We first define some objects in $\mathcal{O}_{K,\Prism}$. As we have already mentioned, $(\AKp, \varphi(\xi))$ is an object in $\mathcal{O}_{K,\Prism}$ via the $\mathcal{O}_K$-algebra structure
\begin{align*}
     \mathcal{O}_K  \hookrightarrow  \mathcal{O}_{\Kinf}  \xleftarrow[\cong ]{\theta} \AKp/\xi  \xrightarrow[\cong ]{\varphi} \AKp/\varphi(\xi).
\end{align*}
By \cref{lem:Ainf/x is p-torsion free}, $\AKp/\mu$ is $p$-torsion free and $p$-adically complete and separated. Since $\varphi(\mu)\in \mu\AKp$, $\AKp/\mu$ has a Frobenius lift induced by that of $\AKp$. Thus $(\AKp/\mu, p)$ is a bounded prism. As $\varphi(\xi)\in (p,\mu)\AKp$, the canonical surjection defines a morphism of prism $(\AKp, \varphi(\xi))\twoheadrightarrow  (\AKp/\mu, p)$. We endow $\AKp/(\mu,p)$ with an $\mathcal{O}_K$-algebra structure via
\begin{align*}
     \mathcal{O}_K  \hookrightarrow  \mathcal{O}_{\Kinf}  \xleftarrow[\cong ]{\theta} \AKp/\xi  \xrightarrow[\cong ]{\varphi} \AKp/\varphi(\xi)\twoheadrightarrow \AKp/(\mu,p).
\end{align*}
Then, the canonical map $(\AKp, \varphi(\xi)) \rightarrow (\AKp/\mu, p)$ defines a morphism in $\mathcal{O}_{K,\Prism}$. 

\begin{remark}\label{rem:reason of phi xi}
     This is the reason why we use the prism $(\AKp, \varphi(\xi))$ instead of the prism $(\AKp, \xi)$.
\end{remark}

\begin{lemma}\label{lem:Gamma-fixed part of AKp/mu is a prism}
The ring $\AKp/\mu$ has a natural $\Gamma_K$-action and $(\AKp/\mu)^{\Gamma_K}$ is $p$-adically complete and separated and $p$-torsion free.
\end{lemma}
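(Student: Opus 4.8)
The plan is to treat the three assertions in turn, the first two being essentially formal. The ideal $\mu\AKp$ is $\Gamma_K$-stable: the computation $g(\mu)=(1+\mu)^{\chi_\cyc(g)}-1=\chi_\cyc(g)\mu(1+\mu a_g)$ from the proof of \cref{lem:Tsuji Lemma 75} shows $g(\mu)\in\mu\AKp$ for every $g\in\Gamma_K$ (and applying this to $g^{-1}$ gives $g(\mu\AKp)=\mu\AKp$), so the $\Gamma_K$-action on $\AKp$ descends to $\AKp/\mu$. For $p$-torsion freeness: $\mathcal{O}_{\Kinf}^\flat$ is a perfect domain of characteristic $p$ and $\mu\bmod p=\epsilon-1$ is a non-zero divisor in it, so \cref{lem:Ainf/x is p-torsion free} shows that $\AKp/\mu$ is $p$-torsion free and $p$-adically complete; since every $\mathbb{Z}_p$-submodule of a $p$-torsion free module is again $p$-torsion free, $(\AKp/\mu)^{\Gamma_K}$ is $p$-torsion free.

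For $p$-adic completeness of $(\AKp/\mu)^{\Gamma_K}$ I would argue directly, writing $D:=\AKp/\mu$ for brevity. First note that each $g\in\Gamma_K$ acts continuously on $D$ for its $p$-adic topology: $g$ acts continuously on $\AKp$ for the weak topology, which coincides with the $(p,\mu)$-adic topology (as recalled in the proof of \cref{prop:automatic continuity}), and it preserves $\mu\AKp$, so it induces a continuous self-map of $D$, whose topology is the quotient $(p,\mu)$-adic $=p$-adic topology (cf. \cref{prop:adic topology}). Next, $\bigcap_n p^nD^{\Gamma_K}\subseteq\bigcap_n p^nD=0$, so $D^{\Gamma_K}$ is $p$-adically separated; and since $D$ is $p$-torsion free, an element $z=p^nw\in D^{\Gamma_K}$ with $w\in D$ forces $p^n(gw-w)=0$, hence $w\in D^{\Gamma_K}$, so $p^nD\cap D^{\Gamma_K}=p^nD^{\Gamma_K}$ for all $n$.

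Finally, for completeness: given a compatible system in $\varprojlim_n D^{\Gamma_K}/p^nD^{\Gamma_K}$, I would lift it to elements $y_n\in D^{\Gamma_K}$ with $y_{n+1}-y_n\in p^nD^{\Gamma_K}$; the sequence $(y_n)_n$ is $p$-adically Cauchy in $D$, hence converges to some $y\in D$ by $p$-adic completeness of $D$, and $y\in D^{\Gamma_K}$ by the continuity noted above; moreover $y-y_n\in p^nD\cap D^{\Gamma_K}=p^nD^{\Gamma_K}$, so $y$ represents the given system. This shows the canonical map $D^{\Gamma_K}\to\varprojlim_n D^{\Gamma_K}/p^nD^{\Gamma_K}$ is bijective, i.e., $(\AKp/\mu)^{\Gamma_K}$ is $p$-adically complete. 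The only points requiring a little care are the descent of continuity through the quotient $\AKp\twoheadrightarrow\AKp/\mu$ and the identity $p^nD\cap D^{\Gamma_K}=p^nD^{\Gamma_K}$, and both are soft consequences of the $p$-torsion freeness of $D$ supplied by \cref{lem:Ainf/x is p-torsion free}, so I do not expect a serious obstacle.
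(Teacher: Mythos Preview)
Your proof is correct and follows essentially the same approach as the paper: both establish $p^nD\cap D^{\Gamma_K}=p^nD^{\Gamma_K}$ from $p$-torsion freeness and then use that limits of $\Gamma_K$-fixed elements remain $\Gamma_K$-fixed. The paper packages the completeness step as a diagram chase (using that $D\xrightarrow{\cong}\varprojlim_n D/p^n$ is $\Gamma_K$-equivariant and that $\varprojlim_n D^{\Gamma_K}/p^n\hookrightarrow\varprojlim_n D/p^n$) rather than your explicit Cauchy-sequence-plus-continuity argument, which lets it bypass checking continuity of the induced action on $D$; but the underlying content is identical.
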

\begin{proof}
Since $g(\mu)=(\mu+1)^{\chi_\cyc(g)}-1\in \mu\AKp$, $\AKp/\mu$ has the induced $\Gamma_K$-action. It follows from those properties of $\AKp/\mu$ that $(\AKp/\mu)^{\Gamma_K}\subseteq \AKp/\mu$ is $p$-torsion free and $p$-adically separated. By taking the $\Gamma_K$-fixed part of the exact sequence
\begin{align*}
     0 \rightarrow \AKp/\mu \xrightarrow{\times p^n}\AKp/\mu  \rightarrow (\AKp/\mu)/p^n  \rightarrow 0,
\end{align*}
we see that the canonical map $(\AKp/\mu)^{\Gamma_K}/p^n  \rightarrow (\AKp/\mu)/p^n$ is injective. Thus we have the following commutative diagram:
\[\begin{tikzcd}
\AKp/\mu \arrow[r, "\cong "] & \varprojlim_{n}(\AKp/\mu)/p^n \\ 
(\AKp/\mu)^{\Gamma_K}\arrow[u, hook]\arrow[r,hook]& \varprojlim_{n}(\AKp/\mu)^{\Gamma_K}/p^n. \arrow[u,hook]
\end{tikzcd}\]
The surjectivity of the bottom arrow follows from the fact that the top arrow is $\Gamma_K$-equivariant and the right arrow is injective.
\end{proof}

\begin{proposition}\label{prop:Gamma-fixed part of AKp/mu is a prism}
The ring $(\AKp/\mu)^{\Gamma_K}$ has an induced Frobenius lift and $\bigl((\AKp/\mu)^{\Gamma_K}, p\bigr)$ becomes a bounded prism. Also we can define an $\mathcal{O}_K$-algebra structure on $(\AKp/\mu)^{\Gamma_K}/p$ in such a way that the canonical map $\bigl((\AKp/\mu)^{\Gamma_K}, p\bigr) \rightarrow (\AKp/\mu, p)$ defines a morphism in $\mathcal{O}_{K,\Prism}$.
\end{proposition}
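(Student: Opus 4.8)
The plan is to verify the three assertions in turn. First I would produce the $\delta$-structure: the Frobenius endomorphism $\varphi$ of $\AKp$ commutes with the $\Gamma_K$-action and satisfies $\varphi(\mu)=\varphi(\xi)\mu\in\mu\AKp$, so it descends to a Frobenius lift on $\AKp/\mu$ which still commutes with $\Gamma_K$, and hence restricts to an endomorphism $\varphi$ of the subring $(\AKp/\mu)^{\Gamma_K}$. This restriction is again a Frobenius lift, because the congruence $\varphi(x)\equiv x^p\bmod p$ already holds in $\AKp/\mu$, and since $(\AKp/\mu)^{\Gamma_K}$ is $p$-torsion free by \cref{lem:Gamma-fixed part of AKp/mu is a prism}, this Frobenius lift comes from a unique $\delta$-structure (\cite[Remark 2.2]{BhattScholze2022}); by the same uniqueness the inclusion $(\AKp/\mu)^{\Gamma_K}\hookrightarrow\AKp/\mu$ is then a $\delta$-homomorphism. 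The bounded prism assertion is now immediate: \cref{lem:Gamma-fixed part of AKp/mu is a prism} gives that $(\AKp/\mu)^{\Gamma_K}$ is $p$-adically complete and $p$-torsion free, so it is a $p$-adically complete, $p$-torsion free $\delta$-ring, and therefore $\bigl((\AKp/\mu)^{\Gamma_K},p\bigr)$ is a bounded prism by the observation following \cref{def:prism}.

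For the $\mathcal{O}_K$-algebra structure, recall that the $\mathcal{O}_K$-structure on $(\AKp/\mu)/p=\AKp/(p,\mu)$ is the composite $\mathcal{O}_K\hookrightarrow\mathcal{O}_{\Kinf}\xleftarrow[\cong]{\theta}\AKp/\xi\xrightarrow[\cong]{\varphi}\AKp/\varphi(\xi)\twoheadrightarrow\AKp/(p,\mu)$. Each arrow is $\Gamma_K$-equivariant for the trivial action on $\mathcal{O}_K$: the inclusion $\mathcal{O}_K\hookrightarrow\mathcal{O}_{\Kinf}$ because $\Gamma_K=\Gal(K_\infty/K)$ fixes $K$, hence $\mathcal{O}_K$, pointwise inside $\mathcal{O}_{\Kinf}$, and the remaining maps because they are induced by the $\Gamma_K$-equivariant maps $\theta$, $\varphi$ and the canonical surjection on $\AKp$ (cf.\ \cref{rem:Gamma-action defines prism hom}). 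Hence the structure map $\mathcal{O}_K\to\AKp/(p,\mu)$ has image contained in $\bigl(\AKp/(p,\mu)\bigr)^{\Gamma_K}$. The plan is then to show it in fact factors through the subring $(\AKp/\mu)^{\Gamma_K}/p$, which embeds into $\AKp/(p,\mu)$ via the injection established in the proof of \cref{lem:Gamma-fixed part of AKp/mu is a prism}. Granting this factorization, $(\AKp/\mu)^{\Gamma_K}/p$ inherits an $\mathcal{O}_K$-algebra structure making $(\AKp/\mu)^{\Gamma_K}/p\to(\AKp/\mu)/p$ an $\mathcal{O}_K$-algebra homomorphism; and then the inclusion of prisms $\bigl((\AKp/\mu)^{\Gamma_K},p\bigr)\hookrightarrow(\AKp/\mu,p)$ — a $\delta$-homomorphism carrying $p$ to $p$ and $\mathcal{O}_K$-compatible on special fibres — is exactly the desired morphism in $\mathcal{O}_{K,\Prism}$.

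The hard part will be precisely this last factorization: showing that the (manifestly $\Gamma_K$-equivariant) structure map $\mathcal{O}_K\to\AKp/(p,\mu)$ has image in $(\AKp/\mu)^{\Gamma_K}/p$ and not merely in $\bigl(\AKp/(p,\mu)\bigr)^{\Gamma_K}$, equivalently that the image of $\mathcal{O}_K$ admits a $\Gamma_K$-equivariant lift to $\AKp/\mu$. One route is to prove that the natural injection $(\AKp/\mu)^{\Gamma_K}/p\hookrightarrow\bigl(\AKp/(p,\mu)\bigr)^{\Gamma_K}$ is an isomorphism: its cokernel injects into $H^1(\Gamma_K,\AKp/\mu)[p]$, so it suffices to check $H^1(\Gamma_K,\AKp/\mu)$ is $p$-torsion free, which I would try to do by reducing to the pro-$p$ part of $\Gamma_K$ and exploiting that $\AKp/\mu$ is $p$-torsion free and $p$-adically complete while $\mathrm{cd}_p\Gamma_K=1$. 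A more hands-on alternative is to lift a uniformizer directly: $W(k)\hookrightarrow\AKp$ is already $\Gamma_K$-equivariant with trivial action, so it is enough to lift the image of $\pi$, and one can seek a $\Gamma_K$-fixed $\widehat\pi\in\AKp^{\Gamma_K}=W\bigl((\mathcal{O}_{\Kinf}^\flat)^{\Gamma_K}\bigr)$ with $\theta(\widehat\pi)=\pi$ — which amounts to producing a $\Gamma_K$-stable system of $p$-power roots of $\pi\bmod p$ inside $\mathcal{O}_{\Kinf}/p$ — and then take the image of $\varphi(\widehat\pi)$ in $(\AKp/\mu)^{\Gamma_K}$.
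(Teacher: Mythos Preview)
Your treatment of the Frobenius lift and the prism structure is correct and matches the paper. You also correctly isolate the crux of the second claim: the structure map $\mathcal{O}_K\to\AKp/(p,\mu)$ visibly lands in $\bigl(\AKp/(p,\mu)\bigr)^{\Gamma_K}$, but one must show it factors through the a priori smaller subring $(\AKp/\mu)^{\Gamma_K}/p$.

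Both of your proposed routes for this last step are problematic. Route (b) actually fails whenever $K/K_0$ is ramified. You correctly note $\AKp^{\Gamma_K}=W\bigl((\mathcal{O}_{\Kinf}^\flat)^{\Gamma_K}\bigr)$; but under the $\Gamma_K$-equivariant bijection $\mathcal{O}_{\Kinf}^\flat\cong\varprojlim_{x\mapsto x^p}\mathcal{O}_{\Kinf}$ one gets $(\mathcal{O}_{\Kinf}^\flat)^{\Gamma_K}=\varprojlim_{x\mapsto x^p}\mathcal{O}_{\Kinf}^{\Gamma_K}=\varprojlim_{x\mapsto x^p}\mathcal{O}_K$. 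For a nonzero $(x^{(i)})$ in this limit, $v_p(x^{(0)})=p^i v_p(x^{(i)})\in\frac{p^i}{e}\mathbb{Z}$ for every $i$, forcing $v_p(x^{(0)})=0$. Thus every element of $(\mathcal{O}_{\Kinf}^\flat)^{\Gamma_K}$ is zero or a unit, and $\theta\bigl(\AKp^{\Gamma_K}\bigr)\subseteq\mathcal{O}_K^\times\cup p\mathcal{O}_K$: when $e>1$ the uniformizer $\pi$ is simply not hit, so no $\widehat\pi$ as you describe exists. (Passing to ``$p$-power roots of $\pi\bmod p$'' does not help: the bijection above is $\Gamma_K$-equivariant, so the $\Gamma_K$-invariants agree.) Route (a) may well be true, but you give no argument for the vanishing of $H^1(\Gamma_K,\AKp/\mu)[p]$, and this is not formal from $p$-torsion-freeness and completeness of the coefficients; compare $H^1(\Gamma_K,\mathcal{O}_{\Kinf})$, which is torsion and typically nonzero.

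The paper avoids both difficulties by replacing $\theta\colon\AKp\to\mathcal{O}_{\Kinf}$ with the $\Gamma_K$-equivariant embedding $\theta_\infty\colon\AKp/\mu\hookrightarrow W(\mathcal{O}_{\Kinf})$ of \cite[Lemma~3.23]{BhattMorrowScholze}, whose cokernel is annihilated (via $\theta_\infty$) by $W(\mathfrak{m}_{\Kinf}^\flat)$. The Teichm\"uller element $[\pi]\in W(\mathcal{O}_K)\subset W(\mathcal{O}_{\Kinf})$ is $\Gamma_K$-fixed, and choosing $\Pi\in\mathfrak{m}_{\Kinf}^\flat$ with $v(\Pi)<v_p(\pi)$ one writes $[\pi]=\theta_\infty([\Pi])\cdot[\pi/\Pi^{(0)}]$, which lies in $\ima\theta_\infty$ by the cokernel description. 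Its unique preimage $\varpi\in\AKp/\mu$ is then $\Gamma_K$-fixed by injectivity of $\theta_\infty$, and a direct diagram check shows the image of $\pi$ in $\AKp/(p,\mu)$ equals $\varpi^p\bmod p\in(\AKp/\mu)^{\Gamma_K}/p$. The moral is that $W(\mathcal{O}_K)$ sits inside the \emph{target} of $\theta_\infty$ as a large $\Gamma_K$-fixed subring containing $[\pi]$, whereas the $\Gamma_K$-invariants of $\AKp$ itself are too small to see $\pi$ at all.
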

\begin{proof}
     Since the Frobenius endomorphism $\varphi\colon \AKp  \rightarrow \AKp$ commutes with the $\Gamma_K$-action on $\AKp$, $(\AKp/\mu)^{\Gamma_K}$ is stable under the Frobenius endomorphism and it becomes a Frobenius lift because $\AKp/\mu$ is $p$-torsion free. Hence $((\AKp/\mu)^{\Gamma_K},p)$ becomes a bounded prism by \cref{lem:Gamma-fixed part of AKp/mu is a prism}. 

     We have seen in the proof of \cref{lem:Gamma-fixed part of AKp/mu is a prism} that the canonical homomorphism $(\AKp/\mu)^{\Gamma_K}/p  \rightarrow \AKp/(\mu, p)$ is injective. Therefore, in order to define an $\mathcal{O}_K$-algebra structure on $(\AKp/\mu)^{\Gamma_K}/p$, it suffices to show that the image of $\mathcal{O}_K \rightarrow \AKp/(\mu,p)$ is contained in $(\AKp/\mu)^{\Gamma_K}/p$. 
     Since the composite $W(k)\hookrightarrow \mathcal{O}_{K} \rightarrow \AKp/(\mu,p)$ coincides with the composite $W(k)\xrightarrow{\varphi} W(k)\hookrightarrow \AKp \twoheadrightarrow \AKp/(\mu,p)$ and the latter factors through $(\AKp/\mu)^{\Gamma_K}/p$, it suffices to show that the image of $\pi\in \mathcal{O}_K$ is contained in $(\AKp/\mu)^{\Gamma_K}/p$. Our strategy is that we first define an element in $ (\AKp/\mu)^{\Gamma_K}/p$ and then show that it is the image of $\pi$. 
     By \cite[Lemma 3.23]{BhattMorrowScholze}, we have an exact sequence
     \begin{align*}
          0 \rightarrow \AKp/\mu  \xrightarrow{\theta_\infty} W(\mathcal{O}_{\Kinf}) \rightarrow \Coker \theta_{\infty}  \rightarrow 0 
     \end{align*}
     such that $\Coker \theta_\infty$ is killed by $W(\mathfrak{m}_{\Kinf}^\flat)$ via $\theta_{\infty}$.
     We can construct an element $\Pi\in \mathfrak{m}_{\Kinf}^\flat$ such that $v(\Pi)=v_p(\Pi^{(0)})<v_p(\pi)$, for example $\Pi= (\epsilon-1)^{1/p^n}$ for $n\gg 0$. Then $[\Pi]\in W(\mathfrak{m}_{\Kinf}^\flat)$ and 
     \begin{align*}
          [\pi]=[\Pi^{(0)}]\cdot [\pi/\Pi^{(0)}]=\theta_\infty([\Pi])\cdot [\pi/\Pi^{(0)}]\qquad \text{in }W(\mathcal{O}_{\Kinf}).
     \end{align*}
     For the second equality, see the sentence written after \cite[Lemma 3.4]{BhattMorrowScholze}. Thus there exists a unique element $\varpi\in \AKp/\mu$ such that $\theta_\infty(\varpi)=[\pi]\in W(\mathcal{O}_{\Kinf})$. Since $\theta_\infty\colon \AKp/\mu  \rightarrow W(\mathcal{O}_{\Kinf})$ is a $\Gamma_K$-equivariant injection, $\varpi\in (\AKp/\mu)^{\Gamma_K}$. Then the image of $\pi\in \mathcal{O}_K$ coincides with $\varpi^p\bmod p\in (\AKp/\mu)^{\Gamma_K}/p$ by the following commutative diagram:
     \[\begin{tikzcd}
     \mathcal{O}_K\arrow[r, hook]&\mathcal{O}_{\Kinf}&\AKp/\xi\arrow[r, "\varphi", "\cong "']\arrow[l, "\theta"', "\cong "] &\AKp/\varphi(\xi) \arrow[d, ->>]\\ 
     \pi\arrow[u, "\rotatebox{90}{$\in$}",phantom]\arrow[r, mapsto]&\pi\arrow[u, "\rotatebox{90}{$\in$}",phantom]&\AKp/\mu \arrow[r, "x\mapsto x^p"]\arrow[u, ->>] &\AKp/(\mu,p) \\ 
     &&(\AKp/\mu)^{\Gamma_K}\arrow[u, hook]\arrow[r, "x\mapsto x^p"]&(\AKp/\mu)^{\Gamma_K}/p\arrow[u,hook] \\ 
     &&\varpi\arrow[u, "\rotatebox{90}{$\in$}",phantom]\arrow[r, mapsto]\arrow[uul, mapsto, bend left=40,"\text{By the definition of $\varpi$}"]&\varpi^p\bmod p.\arrow[u, "\rotatebox{90}{$\in$}",phantom]
     \end{tikzcd}\]
\end{proof}

In summary, we obtain the following figure:
\[\begin{tikzcd}
(\AKp, \varphi(\xi))\arrow[d, ->>]&\mathcal{O}_K \arrow[r, hook]\arrow[rrrdd]&\mathcal{O}_{\Kinf}&\AKp/\xi\arrow[l, "\theta"', "\cong "]\arrow[r, "\cong "', "\varphi"]&\AKp/\varphi(\xi) \arrow[d, ->>]\\ 
(\AKp/\mu, p)&&&&\AKp/(\mu,p) \\ 
\bigl((\AKp/\mu)^{\Gamma_K},p\bigr)\arrow[u, hook]& &&&(\AKp/\mu)^{\Gamma_K}/p.\arrow[u, hook]
\end{tikzcd}\]
We will see that this diagram gives us a triviality modulo $\mu$ of the $\Gamma_K$-action on $\mathcal{N}\bigl(\AKp,\varphi(\xi)\bigr)$ for any prismatic $F$-crystal $\mathcal{N}$ in $\mathcal{O}_{\Prism}$-modules on $\mathcal{O}_{K,\Prism}$. 

We next construct some prisms which will be needed to show that $\mathcal{N}(\AKp,\varphi(\xi))$ satisfies the condition \cref{def:crystalline (phi Gamma)-modules} (2-A). Recall that we chose $\a\in \mathfrak{m}_{\Kinf}^\flat$ such that $v(\a)=pv_p(\pi)$. Since $\varphi([\a]x)=[\a]^p \varphi(x)$ for any $x\in \AKp$, $\varphi\colon \AKp \rightarrow \AKp$ induces a Frobenius lift $\varphi\colon \AKp/[\a]\rightarrow \AKp/[\a]$. Combining it with \cref{lem:Ainf/x is p-torsion free}, we see that $(\AKp/[\a],p)$ is a bounded prism. Recall the element $\xi_p:=pu-[\tau]$ defined in the proof of \cref{lem:Ainf is a local ring}. We have $(\varphi(\xi),p)=(\varphi(\xi_p),p)=(p,[\tau]^p)$ as ideals of $\AKp$. Since $v(\a)= pv_p(\pi)\leq p=v(\tau^p)$, we can  define the $\mathcal{O}_K$-algebra structure on $\AKp/([\a],p)$ by 
\begin{align*}
     \mathcal{O}_K  \hookrightarrow  \mathcal{O}_{\Kinf} \xleftarrow[\cong ]{\theta} \AKp/\xi \xrightarrow[\cong ]{\varphi} \AKp/\varphi(\xi)\twoheadrightarrow \AKp/([\a],p).
\end{align*}
Then the canonical map $(\AKp, \varphi(\xi)) \rightarrow (\AKp/[\a],p)$ defines a morphism in $\mathcal{O}_{K,\Prism}$. Note that the above homomorphism $\mathcal{O}_K  \rightarrow \AKp/([\a],p)$ factors through $\mathcal{O}_K/\pi\cong k$ since $v_p(\a^{(1)})= v_p(\pi)$ and $(\varphi(\xi), [\a])= (p \varphi(u)-[\tau]^p, [\a])=([\a],p)$.
\[\begin{tikzcd}
\mathcal{O}_K\arrow[r, hook]\arrow[d, ->>] &\mathcal{O}_{\Kinf}\arrow[r, "\cong "]\arrow[d, ->>] &\AKp/\varphi(\xi)\arrow[d,->>]\\ 
 \mathcal{O}_K/\pi\arrow[r, dashed]&\mathcal{O}_{\Kinf}/\a^{(1)}\arrow[r, "\cong "]&\AKp/([\a],p)
\end{tikzcd}\]

Note that the map $k\cong \mathcal{O}_K/\pi  \rightarrow \mathcal{O}_{\Kinf}/\a^{(1)} \xrightarrow{\cong } \AKp/([\a],p) $ defined above is the same as the composite of the Frobenius map on $k$ and the canonical inclusion $k\hookrightarrow \AKp/([\a],p)$. So if we define the $\mathcal{O}_K$-algebra structure on $W(k)/p$ by
\begin{align*}
     \mathcal{O}_K \twoheadrightarrow \mathcal{O}_K/\pi\cong k  \xrightarrow{x\mapsto x^p} k \cong W(k)/p,
\end{align*}
the canonical map $(W(k),p) \rightarrow \bigl(\AKp/[\a], p\bigr)$ defines a morphism in $\mathcal{O}_{K,\Prism}$. In summary, we obtain the following figure:
\[\begin{tikzcd}
(\AKp, \varphi(\xi))\arrow[d, ->>]&\mathcal{O}_K \arrow[r, hook]\arrow[d, ->>]&\mathcal{O}_{\Kinf} \arrow[d,->>]&\AKp/\xi\arrow[l, "\theta"', "\cong "]\arrow[r, "\varphi", "\cong "']&\AKp/\varphi(\xi)\arrow[d, ->>] \\ 
(\AKp/[\a],p)&\mathcal{O}_K/\pi\cong k\arrow[r]\arrow[drrr, "x\mapsto x^p"']&\mathcal{O}_{\Kinf}/\a^{(1)}\arrow[rr, "\cong "]&&\AKp/([\a],p) \\ 
%\bigl((\AKp/[\a])^{\Gamma_K},p\bigr)\arrow[u, hook]&{}&{}&&(\AKp/[\a])^{\Gamma_K}/p\arrow[u, hook] \arrow[ll,phantom,"\circlearrowleft",pos=0.3]\\
(W(k),p)\arrow[u,"\text{canonical map}"']&&{}&&W(k)/p.\arrow[u]
\end{tikzcd}\]
We finally construct the quasi-inverse functor and show the equivalence of categories.

\begin{proposition}\label{prop:quasi-inverse functor}
We have a well-defined evaluation functor from the category of prismatic $F$-crystals in $\mathcal{O}_\Prism$-modules on $\mathcal{O}_{K,\Prism}$ to the category of crystalline $(\varphi,\Gamma)$-modules over $\AKp$
\begin{align*}
     \ev\colon \Vect^\varphi(\mathcal{O}_{K,\Prism}, \mathcal{O}_\Prism)  \rightarrow \Mod_{\varphi,\Gamma}^{\fh, \crys}(\AKp),\qquad \mathcal{N}\mapsto \ev(\mathcal{N}):= \mathcal{N}\bigl(\AKp, \varphi(\xi)\bigr).
\end{align*}
\end{proposition}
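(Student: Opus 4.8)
The plan is to verify that $\ev(\mathcal{N}):=\mathcal{N}(\AKp,\varphi(\xi))$ satisfies all the axioms of a crystalline $(\varphi,\Gamma)$-module over $\AKp$, by evaluating $\mathcal{N}$ at the objects and morphisms of $\mathcal{O}_{K,\Prism}$ displayed above and invoking the crystal property; functoriality of $\ev$ is then automatic, as a morphism of $F$-crystals evaluates to an $\AKp$-linear, $\varphi$-compatible, $\Gamma_K$-equivariant map. First, $(\AKp,\varphi(\xi))$ is an object of $\mathcal{O}_{K,\Prism}$, so $\mathcal{N}(\AKp,\varphi(\xi))$ is a finite projective, hence finite free (\cref{lem:finite projective over local ring}), $\AKp$-module, and $\varPhi_{\mathcal{N}}$ evaluated at $(\AKp,\varphi(\xi))$ supplies the $\AKp[1/\varphi(\xi)]$-linear isomorphism $\varPhi_N$. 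By \cref{rem:Gamma-action defines prism hom} each $g\in\Gamma_K$ is an automorphism of $(\AKp,\varphi(\xi))$ in $\mathcal{O}_{K,\Prism}$, so evaluation equips $\mathcal{N}(\AKp,\varphi(\xi))$ with a semi-linear $\Gamma_K$-action commuting with $\varphi$; its continuity is deferred until condition (i) is proved, whereupon it follows from \cref{prop:automatic continuity}.

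For condition (i) I would use the two morphisms $(\AKp,\varphi(\xi))\twoheadrightarrow(\AKp/\mu,p)$ and $\bigl((\AKp/\mu)^{\Gamma_K},p\bigr)\to(\AKp/\mu,p)$ of $\mathcal{O}_{K,\Prism}$ constructed above, the target $(\AKp/\mu)^{\Gamma_K}$ being a prism by \cref{prop:Gamma-fixed part of AKp/mu is a prism}. The first is $\Gamma_K$-equivariant since every $g$ preserves $\mu\AKp$, and the second is $\Gamma_K$-equivariant for the trivial action on the source. Hence the crystal property yields a $\Gamma_K$-equivariant isomorphism
\[
\mathcal{N}(\AKp,\varphi(\xi))/\mu\;\cong\;\mathcal{N}\bigl((\AKp/\mu)^{\Gamma_K},p\bigr)\otimes_{(\AKp/\mu)^{\Gamma_K}}\AKp/\mu
\]
in which $\Gamma_K$ acts only on the second factor. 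Since $\mathcal{N}\bigl((\AKp/\mu)^{\Gamma_K},p\bigr)$ is finite projective over $(\AKp/\mu)^{\Gamma_K}$, taking $\Gamma_K$-invariants and applying \cref{prop:Galois fix part} identifies $(\mathcal{N}/\mu)^{\Gamma_K}$ with $\mathcal{N}\bigl((\AKp/\mu)^{\Gamma_K},p\bigr)$ and exhibits the canonical map $(\mathcal{N}/\mu)^{\Gamma_K}\otimes_{(\AKp/\mu)^{\Gamma_K}}\AKp/\mu\to\mathcal{N}/\mu$ as the displayed base-change isomorphism; this is condition (i), and continuity, hence the fact that $\ev(\mathcal{N})$ is a $(\varphi,\Gamma)$-module over $\AKp$, follows.

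For condition (ii) I would work over the cofinal system of $a\in\mathfrak{m}_{\Kinf}^\flat$ with $v(a)\leq pv_p(\pi)$. Since $\overline{A}_\infty=\AKp/\mathfrak{q}_\infty=\varinjlim_a\AKp/[a]$, the crystal property gives $\mathcal{N}/\mathfrak{q}_\infty=\varinjlim_a\mathcal{N}(\AKp/[a],p)$, while the morphisms $(W(k),p)\to(\AKp/[a],p)$ constructed above (call them $\iota_a$; compatible as $a$ varies) give, again by the crystal property, compatible isomorphisms $\mathcal{N}(\AKp/[a],p)\cong\mathcal{N}(W(k),p)\otimes_{W(k)}\AKp/[a]$, where $\mathcal{N}(W(k),p)$ is finite free over $W(k)$ (\cref{lem:finite projective over local ring}) with Frobenius an isomorphism after inverting $p$. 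Passing to the colimit produces an isomorphism $\mathcal{N}/\mathfrak{q}_\infty\cong\mathcal{N}(W(k),p)\otimes_{W(k)}\overline{A}_\infty$ compatible with the Frobenii and the $\Gamma_K$-actions, with $\Gamma_K$ acting only on $\overline{A}_\infty$: this uses that $W(k)\hookrightarrow\AKp$ is $\Gamma_K$-equivariant for the trivial action, so that $\mathcal{N}(g)\circ\mathcal{N}(\iota_a)=\mathcal{N}(\iota_{g(a)})$ and the image of $\mathcal{N}(W(k),p)$ in the colimit is pointwise $\Gamma_K$-fixed. Then \cref{prop:phi-finite part and tensor product} gives $(\mathcal{N}/\mathfrak{q}_\infty)_\fin\cong\mathcal{N}(W(k),p)\otimes_{W(k)}(\overline{A}_\infty)_\fin$, and taking $\Gamma_K$-invariants with \cref{prop:Galois fix part} together with the identity $(\overline{A}_\infty)_\fin^{\Gamma_K}=W(k)$ from \cref{prop:G-fixed part of Bbar} identifies $(\mathcal{N}/\mathfrak{q}_\infty)_\fin^{\Gamma_K}$ with $\mathcal{N}(W(k),p)$, finite free over $W(k)$, and the canonical map of \cref{def:crystalline (phi Gamma)-modules}(ii) with the base-change isomorphism. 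The main obstacle is exactly this last colimit bookkeeping: since $\Gamma_K$ only permutes the ideals $[a]\AKp$ rather than fixing them, the assertion that $\Gamma_K$ acts trivially on the $\mathcal{N}(W(k),p)$-factor must be checked on the whole colimit, not at any finite stage; everything else reduces to repeated use of the crystal property and of the results of \S2 on $\Gamma_K$-invariants and $\varphi$-finite parts.
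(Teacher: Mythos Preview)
Your proposal is correct and follows the same overall strategy as the paper's proof. The one place where you diverge, and where you flag a potential obstacle, is in the treatment of condition (ii): you pass to the colimit $\varinjlim_a\AKp/[a]$ and worry that $\Gamma_K$ merely permutes the ideals $[a]\AKp$. This worry is unfounded: since the $\Gamma_K$-action on $\mathcal{O}_{\Kinf}^\flat$ preserves the valuation, $g(a)$ and $a$ differ by a unit of $\mathcal{O}_{\Kinf}^\flat$, hence $[g(a)]\AKp=[a]\AKp$ and each ideal $[a]\AKp$ is already $\Gamma_K$-stable. The paper exploits exactly this: it fixes a single $a$ with $v(a)\le pv_p(\pi)$, obtains the $\Gamma_K$- and Frobenius-compatible isomorphism $\mathcal{N}(W(k),p)\otimes_{W(k)}\AKp/[a]\cong N/[a]$ from the crystal property at that one stage, and then simply tensors along the surjection $\AKp/[a]\twoheadrightarrow\overline{A}_\infty$ to reach $N/\mathfrak{q}_\infty$, bypassing the colimit bookkeeping altogether. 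The paper also takes $\Gamma_K$-invariants before the $\varphi$-finite part, whereas you do the reverse; the two operations commute by \cref{rem:phi-finite part}, so this is immaterial.
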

\begin{proof}
Let $\mathcal{N}$ be a prismatic $F$-crystal in $\mathcal{O}_\Prism$-modules on $\mathcal{O}_{K,\Prism}$ and we put $N:=\mathcal{N}(\AKp, \varphi(\xi))$. By the definition of prismatic $F$-crystals, $N$ is a finite height $\varphi$-module over $\AKp$. Since $g\colon \AKp  \rightarrow \AKp$ for every $g\in \Gamma_K$ defines a morphism $(\AKp, \varphi(\xi)) \rightarrow (\AKp, \varphi(\xi))$ in $\mathcal{O}_{K,\Prism}$ as we have explained in \cref{thm:prismatic F-crystal and p-adic rep}, $N$ has a semi-linear $\Gamma_K$-action which commutes with $\varphi_N$. 

We check that the $\Gamma_K$-action is trivial modulo $\mu$. Note that $g\colon \AKp/\mu  \rightarrow \AKp/\mu$ for any $g\in \Gamma_K$ defines a morphism $(\AKp/\mu,p) \rightarrow (\AKp/\mu,p)$ in $\mathcal{O}_{K,\Prism}$, which induces a $\Gamma_K$-action on $\mathcal{N}(\AKp/\mu,p)$. The crystal property implies the following isomorphisms compatible with the $\Gamma_K$-actions and the Frobenii:
\begin{align*}
     N/\mu  \xrightarrow{\cong } \mathcal{N}(\AKp/\mu,p), \\
     \mathcal{N}\bigl((\AKp/\mu)^{\Gamma_K},p\bigr)\otimes_{(\AKp/\mu)^{\Gamma_K}}\AKp/\mu \xrightarrow{\cong } \mathcal{N}(\AKp/\mu,p).
\end{align*}
Since $(\AKp/\mu)^{\Gamma_K}$ is fixed by $\Gamma_K$, $\mathcal{N}\bigl((\AKp/\mu)^{\Gamma_K},p\bigr)$ is also fixed by $\Gamma_K$. So \cref{prop:Galois fix part} shows $\mathcal{N}\bigl((\AKp/\mu)^{\Gamma_K},p\bigr)=\bigl(\mathcal{N}(\AKp/\mu,p)\bigr)^{\Gamma_K}\cong (N/\mu)^{\Gamma_K}$. Therefore, we conclude that $(N/\mu)^{\Gamma_K}$ is a finite projective $(\AKp/\mu)^{\Gamma_K}$-module and the canonical map
\begin{align*}
     (N/\mu)^{\Gamma_K} \otimes_{(\AKp/\mu)^{\Gamma_K}}\AKp/\mu \rightarrow N/\mu
\end{align*}
is an isomorphism.

Taking \cref{prop:automatic continuity,thm:relationship between a lot of conditions} into consideration, it remains to show that $N$ satisfies \cref{def:crystalline (phi Gamma)-modules} (2-A). Note that the ideal $[\a]\AKp$ is stable under the $\Gamma_K$-action since the $\Gamma_K$-action on $\mathcal{O}_{\Kinf}^\flat$ preserves the valuation, and that $g\colon \AKp/[\a]  \rightarrow \AKp/[\a]$ for any $g\in \Gamma_K$ defines a morphism $(\AKp/[\a],p) \rightarrow (\AKp/[\a],p)$ in $\mathcal{O}_{K,\Prism}$, which induces a $\Gamma_K$-action on $\mathcal{N}(\AKp/[\a],p)$. By applying the same argument to the morphism $(\AKp/[\a],p) \rightarrow (W(k),p)$ in $\mathcal{O}_{K,\Prism}$, we see that $\mathcal{N}(W(k),p)$ is a finite free $W(k)$-module and that we have an isomorphism 
\begin{align*}
    \mathcal{N}(W(k),p) \otimes_{W(k)} \AKp/[\a] \xrightarrow{\cong } N/[\a]
\end{align*}
compatible with the $\Gamma_K$-actions and the Frobenii. It suffices to show $\mathcal{N}(W(k),p)\cong (N/[\a])_\icrys^{\Gamma_K}$; this follows from \cref{prop:Galois fix part,prop:isocrystal part and tensor product}.
\end{proof}

% \begin{remark}
%      In the proof of \cref{prop:quasi-inverse functor}, we obtain a $\Gamma_K$-equivariant isomorphism
%      \begin{align*}
%           \mathcal{N}(W(k),p)\otimes_{W(k)}\AKp/[a] \xrightarrow{\cong } \mathcal{N}(\AKp, \varphi(\xi))/[a]
%      \end{align*}
%      for any $a\in \mathfrak{m}_{\Kinf}^\flat$ such that $v(a)\leq p v_p(\pi)$.
%      Since the $\Gamma_K$-action on $\mathcal{N}(W(k),p)$ is trivial, \cref{prop:Galois fix part} implies that it induces an isomorphism $\mathcal{N}(W(k),p)\otimes_{W(k)} (\AKp/[a])^{\Gamma_K} \cong \bigl(\mathcal{N}(\AKp,\varphi(\xi))/[a]\bigr)^{\Gamma_K}$. Thus, the $\Gamma_K$-action on $\mathcal{N}(\AKp, \varphi(\xi))$ is trivial modulo $[a]$. 

%      Note that by the same argument as \cref{lem:Gamma-fixed part of AKp/mu is a prism,prop:Gamma-fixed part of AKp/mu is a prism}, we can show that $\bigl((\AKp/[a])^{\Gamma_K},p\bigr)$ is a prism. Also, we can endow an $\mathcal{O}_K$-algebra structure in such a way that the canonical map $\bigl((\AKp/[a])^{\Gamma_K},p\bigr) \rightarrow (\AKp/[a],p)$ defines a morphism in $\mathcal{O}_{K,\Prism}$ since the morphism $\mathcal{O}_K  \rightarrow \AKp/(p,[a])$ factors through $\mathcal{O}_K/\pi\cong k$. It gives us another proof of the triviality modulo $[a]$ of the $\Gamma_K$-action on $\mathcal{N}(\AKp, \varphi(\xi))$.
% \end{remark}

\begin{theorem}\label{thm:Main theorem for crystalline}
     There exists an equivalence of categories between the category of crystalline $(\varphi,\Gamma)$-modules over $\AKp$ and the category of free crystalline $\mathbb{Z}_p$-representations of $G_K$ via the functor
     \begin{align*}
          T\colon \Mod_{\varphi,\Gamma}^{\fh, \crys}(\AKp)  \xrightarrow{\sim} \Rep_{\mathbb{Z}_p}^{\crys}(G_K),\qquad N \mapsto T(N):=\bigl(N\otimes_{\AKp} W(\mathbb{C}_p^\flat)\bigr)^{\varphi=1}.
     \end{align*}
\end{theorem}
\begin{proof}
We first show that the functor $T$ is full and faithful. 
Recall that for any $(\varphi,\Gamma)$-module $N$ over $\AKp$, we have an $A_\inf[1/\mu]$-linear isomorphism \eqref{eq:isomorphism between N and T after inverting mu}
\begin{align*}
     N\otimes_{\AKp} A_\inf[1/\mu]\cong T(N)\otimes_{\mathbb{Z}_p}A_\inf[1/\mu]
\end{align*}
which is compatible with the Frobenii and the $G_K$-actions and functorial in $N$. 

Let $f,g\colon N_1 \rightarrow N_2$ be morphisms of crystalline $(\varphi,\Gamma)$-modules over $\AKp$. Assume $T(f)$ is equal to $T(g)$. Then the following commutative diagram shows $f \otimes \id_{A_\inf[1/\mu]}=g\otimes\id_{A_\inf[1/\mu]}$. 
\[\begin{tikzcd}
N_1 \otimes_{\AKp} A_\inf[1/\mu]\arrow[r, "\cong ",phantom]\arrow[d, "f\otimes\id"',shift right=1.5ex]\arrow[d, "g \otimes \id", shift left=1.5ex] &T(N_1)\otimes_{\mathbb{Z}_p} A_\inf[1/\mu] \arrow[d, "T(f)\otimes \id=T(g)\otimes \id"]\\ 
N_2 \otimes_{\AKp} A_\inf[1/\mu]\arrow[r, "\cong ",phantom]&T(N_2)\otimes_{\mathbb{Z}_p} A_\inf[1/\mu]
\end{tikzcd}\]
Since $N_2 \hookrightarrow  N_2\otimes_{\AKp}A_\inf[1/\mu]$ is injective, we conclude that $f=g$. 

Let $N_1$ and $N_2$ be crystalline $(\varphi,\Gamma)$-modules over $\AKp$ and let $f_T\colon T(N_1) \rightarrow T(N_2)$ be a morphism of free $\mathbb{Z}_p$-representations of $G_K$. 
By the isomorphism \eqref{eq:isomorphism between N and T after inverting mu}, we have a morphism $N_1 \otimes_{\AKp} A_\inf[1/\mu] \rightarrow N_2 \otimes_{\AKp} A_\inf[1/\mu]$ which is compatible with the $G_K$-actions and the Frobenii. 
Since $A_\inf^{H_K}=\AKp$ by the Ax-Sen-Tate theorem \cite{Ax1964,Sen1969,Tate1967}, taking the $H_K$-fixed part gives us a morphism $ N_1 \otimes_{\AKp} \AKp[1/\mu] \rightarrow N_2 \otimes_{\AKp} \AKp[1/\mu]$.
By \cref{prop:Tsuji Prop 76} (2), this homomorphism induces a morphism $f_N\colon N_1  \rightarrow N_2$. By the construction of $f_N$, we have the following commutative diagram:
\[\begin{tikzcd}
N_1 \otimes_{\AKp} W(\mathbb{C}_p^\flat)\arrow[d, "f_N \otimes \id_{W(\mathbb{C}_p^\flat)}"'] &T(N_1)\otimes_{\mathbb{Z}_p} W(\mathbb{C}_p^\flat)\arrow[l, "\cong "'] \arrow[d, "f_T \otimes\id_{W(\mathbb{C}_p^\flat)}"]\\ 
N_2 \otimes_{\AKp} W(\mathbb{C}_p^\flat)&T(N_2)\otimes_{\mathbb{Z}_p} W(\mathbb{C}_p^\flat).\arrow[l, "\cong "']
\end{tikzcd}\]
By taking the $\varphi=1$ part, we see $T(f_N)=f_T$, as desired.

We show that the composite
     \begin{align*}
          N\colon \Rep_{\mathbb{Z}_p}^{\crys}(G_K) \xleftarrow{\cong } \Vect^\varphi(\mathcal{O}_{K,\Prism}, \mathcal{O}_\Prism)  \xrightarrow{\ev} \Mod_{\varphi,\Gamma}^{\fh, \crys}(\AKp)
     \end{align*}
     gives us the quasi-inverse functor of $T$, where the equivalence $\Rep_{\mathbb{Z}_p}^{\crys}(G_K) \xleftarrow{\cong } \Vect^\varphi(\mathcal{O}_{K,\Prism}, \mathcal{O}_\Prism)$ is \cref{thm:prismatic F-crystal and crystalline rep} due to Bhatt and Scholze. By the definition of the functors, $T\circ \ev$ is equal to the functor $\Vect^\varphi(\mathcal{O}_{K,\Prism}, \mathcal{O}_\Prism) \xrightarrow{\cong } \Rep_{\mathbb{Z}_p}^{\crys}(G_K)$. Therefore, we see that $T\circ N\cong \id$. This shows $T$ is essentially surjective and $N$ is the quasi-inverse functor of $T$.
\end{proof}

\bibliographystyle{halpha}
\bibliography{Master_thesis}
\end{document}